\theoremstyle{plain}
\newtheorem{theorem}{Theorem}[section]
\newtheorem{proposition}[theorem]{Proposition}
\newtheorem{lemma}[theorem]{Lemma}
\newtheorem{corollary}[theorem]{Corollary}
\newtheorem{definition}[theorem]{Definition}
\newtheorem{remark}[theorem]{Remark}
\newtheorem{fact}[theorem]{}
\def\int{\text{interior}}
\def\hyp {\hbox {\rm {H \kern -2.8ex I}\kern 1.25ex}}
\def\reals {\hbox {\rm {R \kern -2.8ex I}\kern 1.15ex}}
\def\integers {\hbox {\rm { Z \kern -2.8ex Z}\kern 1.15ex}}
\def\naturals {\hbox {\rm {N \kern -2.8ex I}\kern 1.20ex}}
\def\rationals {\hbox {\rm { Q \kern -2.2ex l}\kern 1.15ex}}
\def\hyp {\hbox {\rm {H \kern -2.7ex I}\kern 1.25ex}}
\def\bar{\overline}
\def\cal{\mathcal}
\def\strutdepth{\dp\strutbox}
\def \ss{\strut\vadjust{\kern-\strutdepth \sss}}
\def \sss{\vtop to \strutdepth{
\baselineskip\strutdepth\vss\llap{$\diamondsuit\;\;$}\null}}
\def\strutdepth{\dp\strutbox}
\def \sst{\strut\vadjust{\kern-\strutdepth \ssss}}
\def \ssss{\vtop to \strutdepth{
\baselineskip\strutdepth\vss\llap{$\spadesuit\;\;$}\null}}
\def\strutdepth{\dp\strutbox}
\def \ssh{\strut\vadjust{\kern-\strutdepth \sssh}}
\def \sssh{\vtop to \strutdepth{
\baselineskip\strutdepth\vss\llap{$\heartsuit\;\;$}\null}}
\def\strutdepth{\dp\strutbox}
\def \ssy{\strut\vadjust{\kern-\strutdepth \sssy}}
\def \sssy{\vtop to \strutdepth{
\baselineskip\strutdepth\vss\llap{{\large \bf Y}\,\,\,}\null}}
\def\strutdepth{\dp\strutbox}
\def \ssm{\strut\vadjust{\kern-\strutdepth \sssm}}
\def \sssm{\vtop to \strutdepth{
\baselineskip\strutdepth\vss\llap{{\large \bf M}\,\,\,}\null}}
\def\strutdepth{\dp\strutbox}
\def \ssti{\strut\vadjust{\kern-\strutdepth \sssti}}
\def \sssti{\vtop to \strutdepth{
\baselineskip\strutdepth\vss\llap{{\large \bf TIGHT}\,\,\,}\null}}
\def\strutdepth{\dp\strutbox}
\def \ssnn{\strut\vadjust{\kern-\strutdepth \sssnn}}
\def \sssnn{\vtop to \strutdepth{
\baselineskip\strutdepth\vss\llap{{\large \bf not nice}\,\,\,}\null}}
\def\strutdepth{\dp\strutbox}
\def \ssmm{\strut\vadjust{\kern-\strutdepth \sssmm}}
\def \sssmm{\vtop to \strutdepth{
\baselineskip\strutdepth\vss\llap{{ \bf small M-change}\,\,\,}\null}}
\def\strutdepth{\dp\strutbox}
\def \ssmM{\strut\vadjust{\kern-\strutdepth \sssmM}}
\def \sssmM{\vtop to \strutdepth{
\baselineskip\strutdepth\vss\llap{{ \bf rewritten by M}\,\,\,}\null}}
\def\strutdepth{\dp\strutbox}
\def \ssmq{\strut\vadjust{\kern-\strutdepth \sssmq}}
\def \sssmq{\vtop to \strutdepth{
\baselineskip\strutdepth\vss\llap{{ \bf M-question to Y}\,\,\,}\null}}
\begin{document}

\title{Are large distance Heegaard splittings generic ?}

\author{Martin Lustig and Yoav Moriah$^1$}

 \thanks{$^1$ This research is supported by  a grant from the High Council 
for Scientific and Technological Cooperation between France and 
Israel}

\date{\today}

\address{Math\'ematiques (LATP)\\
Universit\'e P. C\'ezanne - Aix-Marseille III \\
52 Ave. Escad. Normandie-Niemen, 13397 Marseille 20, France}
\email{martin.lustig@univ-cezanne.fr}

\address{Department of Mathematics \\
Technion \\
Haifa, 32000 Israel}
\email{ymoriah@tx.technion.ac.il}

\begin{abstract}    
In a previous paper \cite{LM2} we introduced a  notion of ``genericity" for countable sets of curves in the curve  complex of a surface $\Sigma$, based on the Lebesgue measure on the space of projective measured laminations in $\Sigma$.  With this definition we prove that for each fixed $g \geq 2$ the  set of  irreducible genus $g$ Heegaard splittings of high distance is generic, in the set of all irreducible Heegaard splittings.  Our definition of ``genericity" is different and more intrinsic then the one given via random walks.

\end{abstract}

\keywords{Train tracks, Curve complex, Heegaard distance, Heegaard splittings, Generic}

\maketitle
 
\section{Introduction}
\label{introduction}

\vskip5pt

\subsection{The main result}\label{statement-main-result}

In this paper we use the definition of a generic subset of a countable set, introduced in Definition 4.1 of 
\cite{LM2} (see also subsection \ref{subsec_genericity} below), to  investigate whether for each fixed genus $g \geq 2$ the set of high distance Heegaard splittings 
is generic in  the set of all irreducible Heegaard splittings.  For this purpose, we consider irreducible Heegaard  diagrams composed of  two {\it complete decomposing systems}   $\cal D, \cal E$ on a closed surface $\Sigma$,  i.e. two curve systems which both decompose $\Sigma$ into pair-of-pants. We define the set $SNOW(\Sigma)$ of  such pairs $(\cal D, \cal E)$, by  requiring  the pairs to have the additional property that they do not have {\it waves}  (see Definition \ref{smallwave}) with respect to each other.  A pair of such curve systems,  which  determine a Heegaard  splitting that is of distance greater or equal to some  $n \in \mathbb{N}$, will be said to belong to  $SNOW_n(\Sigma)  \subset SNOW(\Sigma)$.  For precise definitions see Sections \ref{prelims} and \ref{tt} below. Our main result is:

\vskip5pt

 \noindent {\bf Theorem \ref{large_distance_snows_are_generic}.} {\it   Let $\Sigma$ be a closed surface of genus $g \geq 2$.  Then for any integer $n \geq 1$ the set $SNOW_{n - 1}(\Sigma)$ is generic in the set  $SNOW(\Sigma)$. }

\vskip10pt

The fact that a Heegaard splitting is high distance has important consequences for the geometry of the  $3$-manifold determined by it. Hence the question, raised in the title of this paper, is natural and relevant. Indeed, since the distance is 
measured in the {\it curve complex}\, $\cal C(\Sigma)$, which is not locally finite, it is a priory not clear, despite of the fact 
that $\cal C(\Sigma)$ has infinite diameter, whether one should expect random points in $\cal C(\Sigma)$ to be close or far 
away from each other. The question becomes even less evident if asked for  subsets of $\cal C(\Sigma)$ with infinite diameter, 
such as  the handlebody sets $\cal{D}(V)$ and  $\cal{D}(W)$, which are used to determine the distance of a Heegaard splitting 
$M = V \cup_\Sigma W$, see section \ref{prelims}.

There are several ways to attack this question, and this paper presents one which is described in
detail in subsection \ref{MandO}.
J. Maher \cite{Mah} has provided another such approach,
based on random walks in the mapping class group and the issuing notion of a ``random 3-manifold'', which was introduced and investigated by N. Dunfield and W. Thurston in \cite{DT}.

\vskip10pt

 \subsection{The notion of ``genericity'' used in this paper}\label{subsec_genericity}
 
It is well known that making mathematical sense of the term ``generic'' is  particularly problematic when considering subsets of countable sets $Y$.  In the ``classic'' setting a set $A \subset Y$ would be called 
{\it generic}  if and only if the complement $Y \smallsetminus A$ is finite. This definition is very restrictive and, in practice, there are many cases where intuitively one would like to call a set generic, yet it does not satisfy this criterium. A common method to deal with this situation is  the following:  One considers a family of finite subsets $Y_n \subset Y$ that exhaust $Y$ (typically given by enumerating 
$Y = \{ y_1, y_2, \ldots \}$, and by defining $Y_n = \{ y_1, y_2, \ldots, y_n \}$). One then computes the the ratio $\rho_n = \frac{\#(Y_n \cap A)}{\#Y_n}$, and defines $A$ to be generic if $\rho_n$ tends to 1.
Of course, the obtained limit ratio depends heavily on the particular chosen enumeration of $Y$.

Another difficulty with statements about  ``genericity''  is that of {\it double counting}.   A typical example here is to count group presentations instead of groups, so that the same group may well be counted several times, or even infinitely often. The problem arises  when in the given  mathematical context it is impossible to count the objects in question directly,  so that double counting becomes unavoidable.

 \vskip10pt

In this paper we do  indeed consider a countable set $Y$ as base set, but we use the fact that our set 
$Y$ is  given as subset of a space $X$  that is already equipped with a natural measure, or rather, 
measure class.  

 This enables us to use the following definition, which was  introduced in a previous paper \cite{LM2}.
It is motivated by the observation that a ``generic'' point with rational coordinates in the unit square 
$[0, 1] \times [0, 1]$ should rightfully be expected to lie in the interior rather than on the boundary of 
$[0, 1] \times [0, 1]$.

\vskip5pt

\begin{definition}\label{generic} \rm
Let $X$ be a topological space,  provided with a Borel measure  $\mu$. Let $Y \subset X$   be a (possibly countable) subset, which is a disjoint  union   $Y =  A \overset {\cdot}{\cup}  B$.   The set $A$ is called {\it generic in $Y$} (or simply {\it  generic},  if  $Y = X$) if the closure  $\bar A$ of $A$ has measure  $\mu(\bar A) > 0$, and  the closure    $\bar B$ of $B$ has   measure $\mu(\bar B) = 0$. (Here ``closure'' always means ``closure in $X$''.)

\end{definition}

Notice that in this definition the sets $\bar  A$  and $\bar B$ may well not be disjoint, although $A$
and $B$ are  assumed to be disjoint.  Note also, that this definition of genericity  extends to sets $Y$ that are not embedded but are just mapped to $X$, by a properly chosen ``natural'' map.  Furthermore, note that this definition does not depend on the actual measure $\mu$ but rather only on its measure class.

 \vskip5pt

The following is an immediate  consequence of the above definition:

\vskip5pt

\begin{lemma}  [Lemma 4.3 (1) of \cite{LM2}] \label{equivalentgeneric}
Given sets $X, Y$ and $A$ as in Definition \ref{generic}. Then $A$  is  generic in $Y$ if and only if  the
closure $\bar A$ contains a set $Z$ which is open in  $\bar Y$ and  of full measure
$\mu(Z) = \mu(\bar Y) > 0$, and which is disjoint from $Y  \smallsetminus A$.

\qed

\end{lemma}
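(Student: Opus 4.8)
The statement is a purely set-theoretic and measure-theoretic reformulation of Definition \ref{generic}, so the plan is to unwind the two directions of the equivalence directly. Throughout I use the elementary facts that closure commutes with finite unions, i.e. $\bar Y = \overline{A \cup B} = \bar A \cup \bar B$, that $\bar A, \bar B \subseteq \bar Y$, and hence $\mu(\bar Y) \le \mu(\bar A) + \mu(\bar B)$ while $\mu(\bar Y) \ge \max\{\mu(\bar A), \mu(\bar B)\}$. Note also that, since $Y = A \overset{\cdot}{\cup} B$, one has $Y \smsm A = B$.

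For the ``only if'' direction, assume $A$ is generic in $Y$, so $\mu(\bar A) > 0$ and $\mu(\bar B) = 0$. The two displayed inequalities then force $\mu(\bar Y) = \mu(\bar A) > 0$. I would take $Z := \bar Y \smsm \bar B$. Then $Z$ is open in $\bar Y$ because $\bar B \cap \bar Y = \bar B$ is closed in $\bar Y$; it satisfies $Z \subseteq \bar A$ because $\bar Y = \bar A \cup \bar B$; it is disjoint from $B = Y \smsm A$ because $B \subseteq \bar B$; and $\mu(Z) = \mu(\bar Y) - \mu(\bar Y \cap \bar B) = \mu(\bar Y) - \mu(\bar B) = \mu(\bar Y) > 0$. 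This $Z$ is exactly what is required.

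For the ``if'' direction, suppose there is a set $Z \subseteq \bar A$, open in $\bar Y$, disjoint from $Y \smsm A = B$, with $\mu(Z) = \mu(\bar Y) > 0$. Then $\mu(\bar A) \ge \mu(Z) = \mu(\bar Y) > 0$ at once, so it remains to see $\mu(\bar B) = 0$. The one point that is not a tautology is to promote ``$Z \cap B = \emptyset$'' to ``$Z \cap \bar B = \emptyset$'': writing $Z = U \cap \bar Y$ with $U$ open in $X$, if some $x \in Z$ lay in $\bar B$ then $U$ would be an $X$-neighbourhood of $x$ meeting $B$, whereas $U \cap B = U \cap \bar Y \cap B = Z \cap B = \emptyset$ since $B \subseteq \bar Y$ — a contradiction. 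Hence $\bar B \subseteq \bar Y \smsm Z$, and therefore $\mu(\bar B) \le \mu(\bar Y) - \mu(Z) = 0$. Thus $A$ is generic in $Y$, which completes the equivalence.

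There is no substantial obstacle here; the only things that require a moment's care are the bookkeeping with the closures $\bar A$ and $\bar B$ (which need not be disjoint even though $A$ and $B$ are), and the small topological observation in the third paragraph that a relatively open subset of $\bar Y$ which misses $B$ automatically misses the full closure $\bar B$.
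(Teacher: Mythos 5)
Your proof is correct, and it is the natural direct argument. Note that the paper itself does not prove this lemma but cites it as Lemma 4.3 (1) of \cite{LM2} and closes it with \qed, so there is no in-paper proof to compare against; your argument fills that gap cleanly. The decomposition $\bar Y = \bar A \cup \bar B$, the explicit choice $Z = \bar Y \smallsetminus \bar B$ in the forward direction, and the promotion of $Z \cap B = \emptyset$ to $Z \cap \bar B = \emptyset$ via openness of $Z$ in $\bar Y$ in the converse direction are exactly the right steps, and the last of these is correctly identified as the only non-tautological point.

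One small caveat worth recording: both directions silently use measure subtraction, i.e.\ $\mu(\bar Y \smallsetminus \bar B) = \mu(\bar Y) - \mu(\bar B)$ and $\mu(\bar Y \smallsetminus Z) = \mu(\bar Y) - \mu(Z)$, which requires $\mu(\bar Y) < \infty$ (or at least that the subtracted set has finite measure). In the forward direction this is harmless since $\mu(\bar B) = 0$, but in the converse direction the conclusion $\mu(\bar B) = 0$ genuinely fails if $\mu(\bar Y) = \infty$: for instance with $X = \reals$, $Y = \rationals$, $A = \rationals \cap \big((-\infty,0)\cup(1,\infty)\big)$, $B = \rationals \cap [0,1]$, and $Z = (-\infty,0)\cup(1,\infty)$, all hypotheses of the ``if'' direction hold with $\mu(Z)=\mu(\bar Y)=\infty$ yet $\mu(\bar B)=1$. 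So the lemma as stated implicitly assumes a finite (or at least $\sigma$-finite with the right bookkeeping) measure on $\bar Y$. This is satisfied in the paper's setting, where $\mu$ is Lebesgue measure on the compact manifold $\cal{PML}(\Sigma)^2$, so your proof is correct in context, but it would be worth stating the finiteness hypothesis explicitly if you intend the lemma to stand on its own.
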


We will specify precisely in the next subsection the sets $X, Y, A$ and the measure $\mu$ that are used in this paper.

 \vskip15pt
 
\subsection{Measures and objects}\label{MandO} The Borel measure space $X$, as in the last subsection, that is used in this paper is the space  $\cal{PML}(\Sigma) \times \cal{PLM}(\Sigma)$,  where 
$\cal{PML}(\Sigma)$ is the space of projective measured laminations on a ``model surface'' 
 $\Sigma$ of genus $g \geq 2$.  As shown by Thurston, the space $\cal{PML}(\Sigma)$ is a 
 $(6g - 7)$-dimensional sphere, equipped with a canonical p.l.-structure, and thus with a canonical Lebesgue measure class.

\vskip5pt

The countable subset $Y \subset X$, required by this definition of the term ``generic'' in the statement of Theorem \ref{large_distance_snows_are_generic},  is a set of pairs $(\cal D, \cal E) \in \cal{PML}(\Sigma)^2$, called $SNOW(\Sigma)$, where $\cal D$ and $\cal E$ are curve systems  on $\Sigma$  that satisfy a certain combinatorial condition, see Definition \ref{snow}. However, in order to focus on the objects of our primary interest, namely Heegaard splittings, we have to be more careful,
in order to avoid a potential double-counting problem as evoked in the previous subsection:

\vskip5pt
A {\it marked Heegaard surface of genus $g$} is a surface $S_g$ of genus $g$ embedded in a 3-manifold $M$, with handlebody complementary components $V$ and $W$, which is equipped with 
a {\it marking homeomorphism} $\theta: \Sigma \to S_g$. Two such marked Heegaard surfaces, with markings $\theta$ and $\theta'$, are {\it equal} if there exists a homeomorphism $h: M \to M$ which is isotopic to the identity, such that $\theta' = h \circ \theta$. Let $\cal S^g$ denote the set of marked Heegaard surfaces of genus 
$g \geq 2$, and let $\cal {S}^g_n$ be the subset of those Heegaard surfaces of distance $\geq n$. Here the {\it distance} of a marked Heegaard surface $S_g$ is measured in the curve complex of the model surface $\Sigma$: More precisely, one measures the distance between the two handlebody sets 
$\cal D(V)$ and $\cal D(W)$ that are given by the $\theta^{-1}$-images of all meridian curves in $S_g$, for each of the two complementary handlebodies $V$ and $W$ respectively.

\vskip5pt

Now choose a map $\sigma_{\cal S}: \cal S^g \to SNOW(\Sigma), (S_g, \theta) \mapsto (\cal D, \cal E)$, where $\theta(\cal D)$ and $\theta(\cal E)$ are systems of meridian curves on  $V$ and $W$ respectively. There are infinitely many such maps, and none seems to be prefered in any obvious way.  In Section  \ref{generic-H-splittings}  we prove for any such map $\sigma_{\cal S}$:

\begin{corollary} 
\label{large_distance_HSs_are_generic_intro}
For any integers $g \geq 2$ and $n \geq 0$ the set $\sigma_{\cal S}(\cal{S}^g_n)$ is generic in the set 
$\sigma_{\cal S}(\cal{S}^g)$.
\end{corollary}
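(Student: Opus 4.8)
The plan is to deduce the corollary from Theorem \ref{large_distance_snows_are_generic} essentially by a ``restriction'' argument, using the observation that genericity in Definition \ref{generic} passes to subsets provided one keeps track of closures. First I would fix a map $\sigma_{\cal S}: \cal S^g \to SNOW(\Sigma)$ as in the statement, and write $Y = \sigma_{\cal S}(\cal S^g)$, $A = \sigma_{\cal S}(\cal S^g_n)$, and $B = Y \smsm A = \sigma_{\cal S}(\cal S^g \smsm \cal S^g_n) \smsm A$. The key point to establish is that $B$ is contained in the image $\sigma_{\cal S}(\cal S^g \smsm \cal S^g_n)$, which in turn is contained in $SNOW(\Sigma) \smsm SNOW_{n-1}(\Sigma)$ — here I need that a marked Heegaard surface of distance $<n$ maps under $\sigma_{\cal S}$ to a pair $(\cal D, \cal E)$ whose Heegaard distance is $<n$, i.e. which lies in $SNOW(\Sigma) \smsm SNOW_{n-1}(\Sigma)$; this follows directly from the definition of the distance of a marked Heegaard surface as the distance between the handlebody sets $\cal D(V), \cal D(W)$, which is exactly the Heegaard distance of the pair $(\cal D, \cal E)$ by construction of $\sigma_{\cal S}$.

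Next I would invoke Theorem \ref{large_distance_snows_are_generic}: writing $SNOW(\Sigma) = SNOW_{n-1}(\Sigma) \,\overset{\cdot}{\cup}\, \big(SNOW(\Sigma) \smsm SNOW_{n-1}(\Sigma)\big)$, genericity of $SNOW_{n-1}(\Sigma)$ means that the closure of $SNOW(\Sigma) \smsm SNOW_{n-1}(\Sigma)$ has $\mu$-measure zero in $X = \cal{PML}(\Sigma)^2$. Since $\bar B \subset \overline{SNOW(\Sigma) \smsm SNOW_{n-1}(\Sigma)}$ by the containment just established, we get $\mu(\bar B) = 0$. It remains to check $\mu(\bar A) > 0$: for this I would use that $\bar A = \overline{\sigma_{\cal S}(\cal S^g_n)}$ and compare it with the closure of $SNOW_{n-1}(\Sigma)$. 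The cleanest route is to show $\sigma_{\cal S}(\cal S^g_n)$ is ``large enough'' inside $SNOW_{n-1}(\Sigma)$ — in fact I expect that $\sigma_{\cal S}$ is surjective onto $SNOW(\Sigma)$ (every pair in $SNOW(\Sigma)$ is an irreducible Heegaard diagram, hence arises from some marked Heegaard surface), so that $\sigma_{\cal S}(\cal S^g_n) \supset SNOW_{n-1}(\Sigma)$ up to the off-by-one in the indexing conventions, whence $\bar A \supset \overline{SNOW_{n-1}(\Sigma)}$ and $\mu(\bar A) = \mu(\overline{SNOW(\Sigma)}) > 0$ by Theorem \ref{large_distance_snows_are_generic}.

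Finally I would package this via Lemma \ref{equivalentgeneric}: the set $Z$ which is open in $\bar Y$, of full measure in $\bar Y$, and disjoint from $Y \smsm A$, can be taken to be the intersection of $\bar Y$ with the analogous full-measure open set $Z' \subset \overline{SNOW(\Sigma)}$ provided by genericity of $SNOW_{n-1}(\Sigma)$; one checks $Z = Z' \cap \bar Y$ is open in $\bar Y$, disjoint from $Y \smsm A = B$ (since $Z'$ is disjoint from $SNOW(\Sigma) \smsm SNOW_{n-1}(\Sigma) \supset B$), and of full measure in $\bar Y$ because $\mu(\bar Y) \le \mu(\overline{SNOW(\Sigma)})$ and $\mu(\bar Y \smsm Z) \le \mu(\overline{SNOW(\Sigma)} \smsm Z') = 0$, together with $\mu(\bar Y) > 0$ which follows from $\bar Y \supset \bar A$ and the previous paragraph. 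The main obstacle I anticipate is not any of the measure-theoretic bookkeeping, which is routine given Definition \ref{generic} and Lemma \ref{equivalentgeneric}, but rather the verification that $\sigma_{\cal S}$ interacts correctly with the two notions of distance and with the image set — i.e. that $\sigma_{\cal S}(\cal S^g) $ and $\sigma_{\cal S}(\cal S^g_n)$ sit inside $SNOW(\Sigma)$ and $SNOW_{n-1}(\Sigma)$ (resp. contain the latter) in the way needed; this is exactly the point where the ``double counting'' subtlety flagged in subsection \ref{MandO} has to be handled carefully, making sure the conclusion is genuinely independent of the chosen map $\sigma_{\cal S}$.
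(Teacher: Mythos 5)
There is a genuine gap in your proposal, centered on the claim that $\sigma_{\cal S}$ is surjective onto $SNOW(\Sigma)$. This is false: $\sigma_{\cal S}$ is a \emph{choice function} which assigns to each marked Heegaard surface a single $SNOW$ pair $(\cal D, \cal E)$, whereas a given pair of handlebody sets $(\cal D(V), \cal D(W))$ admits infinitely many $SNOW$ pairs $(\cal D, \cal E)$ with $\cal D \in \cal D(V)$, $\cal E \in \cal D(W)$; all but one of these are discarded by $\sigma_{\cal S}$. (Your parenthetical justification — that every pair in $SNOW(\Sigma)$ arises from \emph{some} marked Heegaard surface — shows only that such a pair \emph{could} be chosen, not that the fixed map $\sigma_{\cal S}$ actually chooses it.) Consequently $\sigma_{\cal S}(\cal S^g_n) \subsetneq SNOW_n(\Sigma)$, the containment $\sigma_{\cal S}(\cal S^g_n) \supset SNOW_{n-1}(\Sigma)$ fails, and none of your subsequent conclusions $\bar A \supset \overline{SNOW_{n-1}(\Sigma)}$, $\mu(\bar A) > 0$, or $Z' \cap \bar Y \subset \bar A$ are established. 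You flag the double-counting subtlety at the end as the ``main obstacle,'' but your argument does not actually overcome it — it handwaves past it with the false surjectivity claim. (There is also an indexing slip: a Heegaard surface of distance $<n$ maps to a pair in $SNOW(\Sigma) \smsm SNOW_n(\Sigma)$, not $SNOW(\Sigma) \smsm SNOW_{n-1}(\Sigma)$; this one is repairable, but the surjectivity issue is not.)

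The paper's proof handles exactly this difficulty, and it is the substantive content of the corollary. It first isolates Proposition \ref{high-distance-subsets}, a relativized form of Theorem \ref{large_distance_snows_are_generic}: for any subset $H \subset SNOW(\Sigma)$, if $H_n = H \cap SNOW_n(\Sigma)$ intersects each $\bf{G}^{n}\bf{P}(\tau) \times \bf{G^2P}(\tau')$ in a dense subset, then $H_n$ is generic in $H$. Applying this with $H = \sigma_{\cal H}(\cal H^g)$ reduces the corollary to a density statement, which is proved by a concrete approximation argument: given any nonempty open $U \subset \bf{G}^{n}\bf{P}(\tau) \times \bf{G^2P}(\tau')$, pick a pair $(\cal L, \cal L')$ of uniquely ergodic minimal-maximal laminations in $U$, derive down to a sub-basis neighborhood ${\bf P}(\tau_k) \times {\bf P}(\tau'_m) \subset U$ with $k \geq n$, $m \geq 2$, derive \emph{one more time} to $\tau_{k+1}, \tau'_{m+1}$, locate a $SNOW$ pair $(\cal D, \cal E)$ inside ${\bf P}(\tau_{k+1}) \times {\bf P}(\tau'_{m+1})$ via Proposition \ref{SNOW-is-dense}, and then invoke Corollary \ref{pre-distance} to conclude that \emph{whatever} pair $(\cal D', \cal E')$ the map $\sigma_{\cal H}$ assigns to the Heegaard splitting of $(\cal D, \cal E)$ is still carried by $\tau_k, \tau'_m$, hence lies in $U$ and has distance $\geq n$. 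The extra ``buffer'' derivation step is precisely what makes the conclusion independent of the arbitrary choice of $\sigma_{\cal H}$ — this is the idea your proposal is missing.
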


\vskip10pt

{\bf Acknowledgments.} We would like to thank the High Council  for Scientific and Technological Cooperation between France and  Israel for supporting this project, the Department of Mathematics 
at the Technion Haifa Israel and LATP  at the Universit\'e P. C\'ezanne - Aix-Marseille III in Marseilles France for their hospitality. 

\vskip20pt

  \section{Preliminaries} \label{prelims}
 
 \vskip10pt
 
\noindent  
Most of the  definitions we recall here are standard. We use the terminology of our previous papers 
\cite{LM1} and \cite{LM2}, where also some more details can be found.  Definitions  \ref{snow} (b) and \ref{SNOWn} are new.

\vskip10pt

Throughout this paper $\Sigma$ will denote a closed orientable surface of genus $g \geq 2$.  We will frequently consider essential simple closed curves $D_i$ (or $E_j$) on $\Sigma$, and we will not distinguish notationally between curves and isotopy classes of them. 

\vskip5pt

A curve $D$  is called {\it tight} with respect to a system  $\mathcal E = \{E_{1}, \ldots, E_{r}\}$ of pairwise disjoint essential simple closed
curves  $E_i$ in $\Sigma$  if the number of  intersection points with 
$\mathcal E$ can not be strictly decreased  by an isotopy of  $D$. The same terminology is used 
for arcs $\alpha$ which have their  endpoints on $\mathcal E$, where the endpoints cannot leave 
$\mathcal E$ throughout  the isotopy.

\vskip10pt

\begin{definition}\label{smallwave}\rm  
Let $P \subset \Sigma$ be a pair-of-pants,  i.e. a sphere with three open disks removed. 

\vskip7pt

\begin{enumerate}
\item[(a)]  A simple arc in $P$ which has its two endpoints on different  components of $\partial P$ 
will  be called a {\it seam}.

\item[(b)]  A simple arc in $P$ which has  both endpoints on the same  component of $\partial P$, 
and is not  $\partial$-parallel, will be called a {\it wave}.

\item[(c)]   An essential simple closed  curve $D \subset \Sigma$ has a  wave   (or a seam) 
with respect to a  system of curves $\mathcal{E}  \subset \Sigma$ if $D$ is tight with respect to 
$\mathcal E$ and if $D$ contains a subarc  that is a wave (or a seam) in a  complementary component 
$P_{i}$ of  $\mathcal{E}$ in $\Sigma$ which is a pair-of-pants.  

\end{enumerate}

\end{definition}

 \vskip5pt
 
\begin{definition} \label{curvecomplex}\rm 
For any closed surface $\Sigma$ as above we define the curve complex  ${\cal C}(\Sigma)$ as 
follows: A closed $m$-simplex is the isotopy class of a set $\{D_0, \ldots, D_m\}$ of mutually disjoint 
non-parallel essential simple  closed curves on $\Sigma$. On the $1$-skeleton  ${\cal C}^1(\Sigma)$ 
of ${\cal C}(\Sigma)$ there  is a well defined metric $d_\cal C$
given by assigning length $1$ to every edge (= a 1-simplex).

\end{definition} 

The complex ${\cal C}(\Sigma)$  is of finite dimension $3g - 4$ and is not locally finite. It is known (see \cite{MM})  to be hyperbolic in the sense introduced by Gromov, and it admits a natural action of the mapping class group of $\Sigma$.

 \vskip5pt

\begin{definition}\label{snow} \rm 

(a) The set  $\cal D = \{D_0, \ldots, D_{3g-4} \}$ of (isotopy classes of) curves in $\Sigma$ defined 
by a maximal dimensional simplex  in $\cal C(\Sigma)$  is called a {\em complete decomposing system} of $\Sigma$.  All of its complementary components are pairs-of-pants. The set of complete decomposing systems of $\Sigma$ (up to isotopy) is denoted by $\cal{CDS}(\Sigma)$. There is a canonical identification between $\cal{CDS}(\Sigma)$ and the set of maximal dimensional simplices 
of $\cal C(\Sigma)$.

\vskip5pt

\noindent
(b)  Any two complete decomposition systems ${\cal D},  {\cal D}' \in \cal{CDS}(\Sigma)$  satisfy the   
{\it symmetric no wave condition} (or ${SNOW}$), if each curve in ${\cal D}$ has no waves with respect 
to any of the curves of ${\cal D}'$, and vice versa. Define $SNOW(\Sigma)$ to be the set of all pairs of complete decomposing systems which are $SNOW$.  

\end{definition}

\vskip5pt

Any  complete decomposing system $\cal D =  \{D_0, \ldots, D_{3g-4}\} \subset {\cal C}(\Sigma)$ determines a handlebody $V_{\cal D}$ with boundary $\partial V_\cal D = \Sigma$, which is defined by  
by declaring  that all the curves $D_i$ bound disks in  $V_{\cal D}$.  Conversely, given a handlebody 
$V$ with an identification $\partial V = \Sigma$,  then the set ${\cal D}(V)$ of curves in $\cal C^0(\Sigma)$ which bound disks in $V$ (called {\it meridian curves} on $\partial V$) is called the  {\it handlebody set} determined by $V$.  Such a set ${\cal D}(V)$ is the vertex set of a 
subcomplex of ${\cal C}(\Sigma)$ which is connected (see \cite{Mc}).

\vskip5pt

The following is well known and easy to prove:

\begin{remark} \label{otherdisks1} \rm Let ${\mathcal D} \in \mathcal{CDS}(\Sigma)$ be a complete decomposing system, with associated handlebody $V_\cal D$ as above. Then any meridian curve $D \in \cal D(V_\cal D)$   is either contained in $\cal D$,  or else $D$  has a wave with respect to $\mathcal D\,$.

\end{remark}

Any  two  maximal dimensional simplexes  $\sigma,\tau \subset\cal C(\Sigma)$ determine a Heegaard splitting  $M = V \cup_{\Sigma} W$ for some $3$-manifold $M$. Here $V$ and $W$ are the handlebodies determined by  the two complete decomposing systems that are given by  $\sigma$ and $\tau$ 
respectively. 

Given a Heegaard splitting $M = V \cup_{\Sigma} W$ of some $3$-manifold $M$, Hempel (see \cite{He}) defines  the {\em distance of the Heegaard splitting} as follows:

$$ d(V,W) = {\min}\{d_\cal C(D, E) \mid D \in {\cal D}(V), \, E \in {\cal D}(W)\}$$

Note that a handlebody set has infinite diameter in $\cal C(\Sigma)$. Hence, for randomly chosen $D \in \cal D(V), E \in \cal D(W)$, the distance $d_\cal C(D, E)$ may be arbitrary large. 
However, combinatorial conditions for choosing curves $D, E$ which realize $d(V,W)$ are given in \cite{LM1}.

\vskip5pt

A crucial ingredient  in the arguments of this paper is Lemma 1.3 of Hempel ~\cite{He} which can be restated as follows:

\begin{lemma}\label{hempellemma} 
Any irreducible Heegaard splitting  $M = V \cup_{\Sigma} W$ admits a pair of   complete decomposing systems  $D \in {\cal D}(V), \, E \in {\cal D}(W)$ which satisfy the condition $SNOW$.

\end{lemma}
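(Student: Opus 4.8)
The plan is to start with an arbitrary pair of complete decomposing systems $\cal D \in \cal D(V)$ and $\cal E \in \cal D(W)$ — these exist because the handlebody sets contain maximal simplices of $\cal C(\Sigma)$ — and then to modify them by wave moves until the $SNOW$ condition holds, all the while staying inside the handlebody sets. First I would put $\cal D$ and $\cal E$ in tight position with respect to each other, so that the notions of wave and seam from Definition \ref{smallwave} apply. The key combinatorial step is the classical \emph{wave move} (or ``wave surgery'') of Heegaard diagrams: if some curve $D_i \in \cal D$ has a wave $w$ with respect to $\cal E$, lying in a pair-of-pants component $P$ of $\Sigma \smsm \cal E$, one can use $w$ to replace $D_i$ by a new curve $D_i'$ obtained by surgering $D_i$ along $w$; since $w$ is a wave, $D_i'$ is again essential, the new system is again a complete decomposing system, and crucially the disk bounded by $D_i'$ in $V$ is obtained from the disk bounded by $D_i$ by a disk swap along the disk in $V$ cut off by $w$, so $D_i' \in \cal D(V)$ still. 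The symmetric statement handles waves of $\cal E$-curves with respect to $\cal D$.

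The second ingredient is a complexity function that strictly decreases under each wave move, guaranteeing termination. The natural candidate is the total geometric intersection number $\sum_{i,j} i(D_i, E_j)$ of the two systems (taken in tight position); one checks that a wave surgery strictly reduces this number, because cutting along a wave removes the two intersection points of $\partial w$ with the relevant $E$-curve and, after re-tightening, does not create new ones (this is where one must be slightly careful: the new curve $D_i'$ must be re-isotoped to be tight with respect to $\cal E$ before re-measuring, and one must verify that re-tightening cannot undo the gain — this follows from a standard innermost-disk / bigon argument). Since the intersection number is a non-negative integer, only finitely many wave moves can be performed, so the process stops at a pair $(\cal D', \cal E')$ with $\cal D' \in \cal D(V)$, $\cal E' \in \cal D(W)$, and no waves in either direction — i.e., a $SNOW$ pair. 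Irreducibility of the splitting is used to know that we never run into the degenerate situation where the process forces $\cal D'$ and $\cal E'$ to share a curve in a way that would reduce the splitting; more precisely, it guarantees that the terminal diagram genuinely represents $M = V \cup_\Sigma W$ and not a stabilization-reducible one, but the $SNOW$ conclusion itself only needs the wave-move termination.

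The main obstacle I expect is the bookkeeping around re-tightening after a wave surgery: a priori the surgered curve $D_i'$ need not be tight with respect to $\cal E'$, and one has to be sure both that isotoping it to tight position does not push the intersection number back up and that $D_i'$ (and then its tight representative) remains essential and non-parallel to the other curves of the system, so that one still has a genuine element of $\cal{CDS}(\Sigma)$. This is handled by the standard surface-topology toolkit — innermost disks, bigon removal, and the observation that a wave, being non-$\partial$-parallel in its pair-of-pants, performs an essential surgery — but it is the step that requires the most care. Since all of this is due to Hempel \cite{He}, I would present the argument in the compressed form above, emphasizing the wave-move mechanism and the monotonicity of the intersection-number complexity, and refer to \cite{He} (and to the exposition in \cite{LM1}) for the routine verifications.
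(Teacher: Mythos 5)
The paper does not prove this lemma at all: it is explicitly introduced as a restatement of Lemma~1.3 of Hempel \cite{He}, and the paper simply cites that source. So there is no proof in the paper to compare against; what I can assess is whether your reconstruction stands on its own, and I think it has a real gap.

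The problem is the wave move itself. You propose, given a wave $w$ of $D_i$ with respect to $\cal E$, to ``surger $D_i$ along $w$.'' But by Definition~\ref{smallwave}, $w$ is a subarc of $D_i$, so this does not parse — there is nothing to surger $D_i$ with. What a wave of $D_i$ with respect to $\cal E$ naturally permits is a surgery on $\cal E$: since $\partial w$ lies on some $E_j$ and $w$ sits inside a pair-of-pants $P$ complementary to $\cal E$, one can form $E_j' = w \cup b$ for a subarc $b$ of $E_j \smsm \partial w$, and one can check, via the ball obtained by cutting $W$ along $\cal E$, that $E_j'$ still bounds a disk in $W$. But even this corrected move fails: the arc $w$ has both endpoints on a single boundary component of the \emph{pair-of-pants} $P$ and is not $\partial$-parallel, so $w$ splits $P$ into two annuli, and therefore $E_j' = w \cup b$ is isotopic in $\Sigma$ to one of the other two boundary circles of $P$ — that is, to another curve of the system $\cal E$ (or to $E_j$ itself). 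The output is a degenerate system with a repeated curve, not a new element of $\cal{CDS}(\Sigma)$. This is precisely where the classical Heegaard-diagram wave move, which operates on a minimal cut system of $g$ disjoint meridians whose complement is a $2g$-holed sphere (so a wave there can produce a genuinely new, non-parallel curve), diverges from the complete-decomposing-system setting of this lemma, where every complementary region is a pair-of-pants. Your intersection-number termination argument is fine in spirit, but the move feeding it never produces a valid new pair, so the induction does not get started. A genuinely different modification step is needed, and I would recommend simply citing Hempel \cite{He} for this lemma, as the paper itself does.
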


\begin{definition}\label{SNOWn}\rm
A pair $({\cal D}, {\cal E}) \in{\cal CDS}(\Sigma)$ is said to have property $SNOW_n$, if $({\cal D}, {\cal E}) \in SNOW(\Sigma)$ and if in addition the handlebodies $V$ and $W$, determined by $\cal D$ and $\cal E$ respectively, satisfy $d(V,W) \geq n$. The set of all such pairs $({\cal D}, {\cal E})$ is denoted by $SNOW_n(\Sigma)$.

\end{definition}

\vskip15pt

 \section{Train tracks and Laminations }\label{tt}
 
 \vskip5pt

 \subsection{Basics} \label{basics-on-tts}\hfill
 
 \vskip10pt

In this subsection we recall some basic definitions and notations. For a more detailed exposition see \cite{LM1} and \cite{LM2}:
 
 \vskip10pt

 \begin{fact}
 \label{train-track-basics} \rm
(a) A {\it train track} $\tau$ in $\Sigma$ is a closed subsurface with a  {\it singular $I$-fiberation}: The interior of  $\tau$ is fibered by open arcs,   and   the  fibration extends to a fiberation of the closed surface  $\tau$  by  properly  embedded closed arcs (the {\it $I$-fibers}), except for finitely many  {\it singular points}  (also called  {\it cusp points}) on $\partial \tau$,  where precisely two  fibers meet. We call these  fibers {\it singular fibers}. We admit  the case that a fiber is {\it  doubly singular}, i.e. both of  its  endpoints are singular points. 

\vskip5pt

\noindent
(b) Two singular fibers are {\it adjacent} if they share a singular  point  as a common endpoint. A maximal connected union of singular  or  doubly singular $I$-fibers is called an {\it exceptional fiber}.  It is either 
homeomorphic  to a closed  interval, or to a simple closed curve on  $\Sigma$.  In  the latter case it will be called a {\it  cyclic  exceptional fiber}. We explicitely admit this second case, although  in the classical  train track  literature this case is sometimes  suppressed.

\vskip5pt

\noindent
(c)  Following Thurston, one usually pictures a singular point $P \in \partial \tau$  in such a way 
that any two arcs from $\partial \tau$, which intersect in $P$, converge towards  $P$ from the ``same direction", thus giving  rise to a {\it cusp point} on the boundary of the corresponding complementary component  of  $\tau$ in  $\Sigma$.

\vskip5pt

\noindent
(d) We define the {\it type} of a complementary component $\Delta$ of   $\tau$ in  $\Sigma$ as given by 
the genus of $\Delta$,  the number of boundary components of $\Delta$, and the number of cusp points on each of its boundary components.  If $\Delta$ is simply connected, we speak of  an $n$-gon if there are  precisely $n$ cusp points on $\partial  \Delta$.  For example, if  $\partial \Delta$ contains precisely three  cusp points, we say that  $\Delta$ is a {\it triangle}. An arc of   $\partial \Delta$ which joins two adjacent cusp points is called a    {\it side} of $\Delta$. 

If $\Delta$ is simply connected, one usually requires $\Delta$ to have at least 3 cusp points on $\partial \Delta$.

\vskip10pt

\noindent
(e) A train track $\tau$ in $\Sigma$ is called {\it filling}, if all  complementary components of $\tau$ in 
$\Sigma $ are simply  connected,  and if each of them has at least 3 cusp points on its boundary. 
The train track $\tau$ is called {\it maximal}, if every  complementary component is a triangle.

\end{fact}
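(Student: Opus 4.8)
Since Fact~\ref{train-track-basics} is largely a list of definitions and normalizations, a ``proof'' here consists of checking the one genuine assertion hidden inside it: the dichotomy in part~(b) that a maximal connected union of singular and doubly singular $I$-fibers — an \emph{exceptional fiber} — is homeomorphic either to a closed interval or to a simple closed curve in $\Sigma$. The plan is to encode the adjacency relation among singular fibers as a finite graph of maximum degree at most two and to read off the classification of its connected components.

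First I would record the finiteness input built into the definition of a singular $I$-fiberation: there are only finitely many singular (cusp) points on $\partial\tau$, and at each such point exactly two fibers meet. Hence there are finitely many singular fibers; a singular fiber, being a properly embedded closed arc, has one singular endpoint or two (the latter being the doubly singular case); and at any fixed singular point the pair of fibers meeting there is uniquely determined. Now form the multigraph $G$ whose vertices are the singular fibers, with one edge for each singular point $P$, joining the two fibers meeting at $P$. Since each singular fiber has at most two singular endpoints and contributes at most one incidence at each, $G$ has maximum degree $\le 2$, and it has no loops (a loop at a fiber $f$ would force both endpoints of $f$ to be the same singular point, where then only $f$ meets itself, contradicting that precisely two fibers meet there). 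A finite connected multigraph of maximum degree $\le 2$ without loops is a path or a cycle (allowing the $2$-cycle). Translating back: a maximal connected union of singular fibers is built by gluing successive fibers at single cusp points; if the underlying graph is a path the union is a properly embedded closed arc, and if it is a cycle the union closes up. In the cyclic case distinct fibers have disjoint interiors and are glued along single boundary points, so there are no self-intersections and the union is an embedded simple closed curve on $\Sigma$ — the \emph{cyclic exceptional fiber}. This establishes (b); the adjoining remark in (b)--(c) that each singular point yields a genuine cusp on the neighbouring complementary component is immediate once one adopts the Thurston normalization that the two fibers approach $P$ from the same side, producing two boundary arcs meeting in a tangential corner.

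The remainder is naming rather than theorem. Part~(a) is the definition of a train track and of the singular $I$-fiberation; part~(d) defines the \emph{type} of a complementary component $\Delta$ as the data consisting of the genus of $\Delta$, the number of boundary components of $\Delta$, and the number of cusp points on each boundary component, which is visibly an invariant of the homeomorphism type of the pair formed by $\Delta$ and its set of cusps, and then fixes terminology ($n$-gon, triangle, side) in the simply connected case; part~(e) defines \emph{filling} and \emph{maximal} train tracks purely in these terms. None of this requires argument beyond observing that the listed data are well defined.

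The only point needing any care — and it is slight — is the degree bound for $G$ in the presence of doubly singular fibers, where a single fiber can be incident to a singular point at each of its two ends, possibly to the same other fiber. Here one checks that even the degenerate configuration of two doubly singular fibers glued at both pairs of endpoints is not pathological: it yields a circle formed by two arcs sharing exactly their two endpoints, hence an embedded simple closed curve, which is precisely why the second alternative in (b) is present in the statement.
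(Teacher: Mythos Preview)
Your analysis is correct, and in fact you have done more than the paper itself: in the paper this block is purely expository, with no proof given or intended. The authors treat all of parts (a)--(e) as definitions and conventions, including the dichotomy in (b), which they simply assert as part of the definition of an exceptional fiber. Your graph-theoretic justification of that dichotomy --- encoding singular fibers as vertices, cusp points as edges, bounding the degree by $2$, and reading off that connected components are paths or cycles --- is a clean and correct way to verify the one genuinely provable clause, and it is not something the paper bothers to spell out.
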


\vskip5pt  

\begin{fact} \label{laminations-basics} \rm
(a)  A lamination $\cal L$ in $\Sigma$ is a  non-empty  closed subset $\Sigma$ which is the union 
of (possibly non-closed)  simple curves,  called {\it leaves}, that are geodesic with respect to some hyperbolic structure on $\Sigma$. A lamination $\cal L$ provided with a transverse measure  $\mu$ 
 is called a measured lamination, and its projective class is denoted by $[\cal L, \mu]$. The space 
 $\cal{PML}(\Sigma)$ of all projective measured laminations $[\cal L, \mu]$ on $\Sigma$, where 
 $\cal L$ is the support of $\mu$,  is known to be homomorphic to a $(6g-7)$-dimensional sphere, 
 often refereed to as the ``Thurston boundary of Teichm\"uller space''.

\vskip5pt  

\noindent
(b) It is well known that $\cal{PML}(\Sigma)$ contains a dense subset of {\em uniquely ergodic} laminations:  In this case $[\cal L, \mu]$ is uniquely determined by $\cal L$.  Hence, in this case 
we can write  $\cal L \in \cal{PML}(\Sigma)$.  In particular,  every essential simple closed curve 
$D \subset \Sigma$  by itself  is a uniquely ergodic lamination, so that one has a canonical embedding 
$\cal C^0(\Sigma) \subset \cal{PML}(\Sigma)$.

A complete decomposing system $\cal D$ on $\Sigma$ is a lamination that is not uniquely ergodic, so that it defines a finite dimensional simplex in $\cal{PML}(\Sigma)$.  By associating to $\cal D$ the barycenter of this simplex (i.e. the curves of $\cal D$ all carry the same weight) we obtain a well defined embedding $\cal{CDS}(\Sigma) \subset \cal{PML}(\Sigma)$.

\vskip5pt  

\noindent
(c) A lamination $\cal{L}$ in a surface $\Sigma$ is called {\it maximal} if all  of  its complementary regions are triangles.  A lamination $\cal{L} \subset \Sigma$ is called {\it minimal} if every leaf is dense in  
$\cal{L}$.  (This terminology refers to the partial order on the set of laminations given by the inclusion.)
A lamination which satisfies both of these conditions is called {\em minimal-maximal}. 
Such laminations are {\em totally arational}, i.e. they do not contain any closed curve as leaf.

Again, the set of uniquely ergodic minimal-maximal laminations is  well known to be dense in 
$\cal{PML}(\Sigma)$.
\end{fact}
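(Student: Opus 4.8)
These are recalled as standard facts, so the task is really to point to the sources and to verify the few assertions that are genuinely internal to the set-up of this paper. The plan is to split (a)--(c) into three packages: (i) Thurston's structure theorem exhibiting $\cal{PML}(\Sigma)$ as a PL sphere carrying a canonical Lebesgue measure class; (ii) the two concrete realizations $\cal C^0(\Sigma)\hookrightarrow\cal{PML}(\Sigma)$ and $\cal{CDS}(\Sigma)\hookrightarrow\cal{PML}(\Sigma)$; and (iii) the genericity of unique ergodicity and of being minimal--maximal in the Thurston boundary.

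For (i), I would follow Thurston: to a measured lamination $[\cal L,\mu]$ one assigns the weight vector it induces on a train track $\tau$ carrying $\cal L$; for a maximal $\tau$ this identifies the laminations carried by $\tau$ with a top-dimensional polyhedral cone in $\mathbb{R}^{6g-6}$, and as $\tau$ ranges over maximal train tracks these charts cover $\cal{ML}(\Sigma)$ with piecewise-linear transition maps. Hence $\cal{ML}(\Sigma)$ is a PL manifold homeomorphic to $\mathbb{R}^{6g-6}$ and $\cal{PML}(\Sigma)$ is a PL $(6g-7)$-sphere; one may equally use Dehn--Thurston coordinates attached to a pants decomposition. The point that must be recorded explicitly, since all the genericity arguments of the paper rest on it, is that this PL-structure, and hence the Lebesgue measure \emph{class}, is canonical, i.e. independent of the auxiliary choices. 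Detailed proofs are in Fathi--Laudenbach--Po\'enaru and in Penner--Harer, and the measure-class formulation is the one already used in \cite{LM2}.

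For (ii): an essential simple closed curve $D$, viewed as a lamination with a single leaf, carries a transverse measure that is unique up to scale, so $D$ is uniquely ergodic and $D\mapsto[D]$ is a well-defined map $\cal C^0(\Sigma)\to\cal{PML}(\Sigma)$; it is injective because the geometric intersection functions $i(\,\cdot\,,\gamma)$, for $\gamma$ an essential simple closed curve, are continuous on $\cal{PML}(\Sigma)$ and together separate projective classes, and $i(D,\gamma)$ recovers $D$. A complete decomposing system $\cal D=\{D_0,\ldots,D_{3g-4}\}$, by contrast, supports a whole $(3g-4)$-dimensional simplex of transverse measures -- arbitrary non-negative weights on its $3g-3$ components -- so it is not uniquely ergodic; sending $\cal D$ to the barycenter, all weights equal, gives a well-defined map $\cal{CDS}(\Sigma)\to\cal{PML}(\Sigma)$, again injective by the same intersection-number argument applied to the curves $D_i$.

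Finally, for the density statements in (b) and (c) I would invoke ergodicity of the Teichm\"uller geodesic flow (Masur, Veech): transported through the correspondence between measured foliations and measured laminations, it shows that the uniquely ergodic laminations have full Lebesgue measure in $\cal{PML}(\Sigma)$, and that a generic -- full-measure, and in fact also residual -- measured lamination is moreover minimal and maximal, hence filling; compare Kerckhoff--Masur--Smillie. Intersecting these full-measure sets yields a dense set of uniquely ergodic minimal--maximal laminations, and minimal--maximality forces total arationality, since a minimal lamination with a closed leaf would \emph{be} that closed curve, whose complement is not a union of triangles when $g\geq 2$. I do not expect any single hard step here: the mathematical content is the citation of Thurston's structure theorem together with the standard genericity theorems for the Teichm\"uller flow, so the only real work is the bookkeeping -- pinning down that the measure class is canonical and that the two maps above are well-defined and injective -- and that is handled throughout by the intersection-number functions $i(\,\cdot\,,\gamma)$.
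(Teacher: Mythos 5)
The paper does not prove Fact~\ref{laminations-basics}; it is explicitly a recollection of standard background material (the subsection is titled ``Basics'' and opens with ``we recall some basic definitions and notations''). So there is no internal proof to compare against. Your sketch is a correct and appropriately organized filling-in of what the paper takes for granted: Thurston's PL-sphere structure on $\cal{PML}(\Sigma)$ via train-track or Dehn--Thurston charts, with the resulting Lebesgue measure class being canonical because the transition maps are piecewise-linear; the two embeddings $\cal C^0(\Sigma)\hookrightarrow\cal{PML}(\Sigma)$ and $\cal{CDS}(\Sigma)\hookrightarrow\cal{PML}(\Sigma)$, whose injectivity you correctly derive from the geometric intersection pairing (for the second embedding it is even simpler: the support of the barycentric lamination recovers $\cal D$ as a curve system); and the genericity of unique ergodicity (Masur, Veech) and of minimal-maximality, whose intersection gives the dense subset claimed in (b) and (c). Your observation that minimality plus maximality forces total arationality is also correct. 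In short, the proposal is sound; the only thing to flag is that it supplies a proof where the paper deliberately supplies a citation-free recollection, so the ``comparison'' is vacuous.
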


\vskip5pt

\begin{fact} \label{carried-parallel} \rm
(a) An arc, a closed curve or a lamination in $\Sigma$ {\it is carried}  by a train track $\tau \subset \Sigma$ if it is contained in $\tau$ and  throughout transverse to the  $I$-fibers of $\tau$.   A curve {\it can be carried} by $\tau$ if after a suitable isotopy it is carried by $\tau$. The set of projective measured laminations $[\cal L, \mu]$ which have as support a lamination $\cal L$ carried by $\tau$ is denoted by 
$\cal{PML}(\tau)$.

\vskip5pt  

\noindent
(b) Two simple arcs carried by $\tau$ are {\it parallel} if  they intersect  the same $I$-fibers,  and these intersections occur on the two arcs  in precisely the same order. An arc, a closed curve or a lamination on  $\Sigma$ which is  carried by $\tau$ is   said to  {\it cover} $\tau$ if  it  meets every $I$-fiber of $\tau$.

\vskip5pt  

\noindent (c) A train track $\tau'$ is {\it carried by a train track} $\tau$ if every $I$-fiber of $\tau'$ is a subarc of an $I$-fiber of  $\tau$. Note that every curve (or arc or lamination) carried by $\tau'$ is also carried by  $\tau$.

\vskip5pt

\noindent
(d) Given a train track $\tau$ and a lamination $\cal{L}$ carried by it, the notion of a ``derived''  train 
$\tau_1$ with respect to $\cal{L}$ is formally defined in subsection 2.3 of \cite{LM2}.  In slightly informal terms,  $\tau_1$ is the train track obtained from $\tau$ by splitting $\tau$ at every cusp point along a 
{\it unzipping path}, which is a path disjoint from $\cal{L}$ that covers $\tau$. If $\cal{L}$ is maximal-minimal, such a  train track always exists and is uniquely determined by $\tau$ and $\cal{L}$. The train  track $\tau_1$  obtained by splitting along  shortest possible such paths is  said to be  {\it  derived from 
$\tau$ with respect to  $\mathcal L$}, or simply {\it  derived from  $\tau$}.

Notice that $\cal L$ is always carried by $\tau_1$, and that $\tau_1$ is carried by $\tau$. More generally, every sequence of unzipping paths leads to a new train track that is carried by the original one.

\vskip5pt 

\noindent
(e) If $\tau'$ is derived from $\tau$, then every curve (or lamination) that is carried by $\tau'$ covers 
$\tau$, by Lemma 2.9 of \cite{LM1}.  The proof of this lemma shows that the same is true for any arc which runs parallel on $\tau'$ to at least one entire side of one of the connected components complementary to $\tau'$, or parallel to one of the unzipping paths used to derive $\tau'$ from $\tau$. 

\end{fact}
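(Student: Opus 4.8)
The plan is to prove part (e); the other parts of the Fact are definitions or immediate remarks. The substantive assertion is that every curve or lamination carried by $\tau'$ covers $\tau$, which is Lemma 2.9 of \cite{LM1}; I would recall the proof of that lemma in a form that yields at the same time the refinement made in the second sentence. Everything reduces to a single geometric observation about the unzipping construction:

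\medskip
\noindent $(\ast)$\quad \emph{every side of every complementary component of $\tau'$ contains a full copy of one of the unzipping paths $\gamma_j$ that were used to derive $\tau'$ from $\tau$.}
\medskip

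\noindent First I would deduce the Fact from $(\ast)$. By construction each unzipping path $\gamma_j$ covers $\tau$, i.e.\ meets every $I$-fiber of $\tau$; and $\tau'$ is carried by $\tau$ (Fact \ref{carried-parallel}(c),(d)), so two arcs that are parallel on $\tau'$ meet exactly the same $I$-fibers of $\tau$. Hence an arc parallel on $\tau'$ to the boundary copy of some $\gamma_j$ meets every $I$-fiber of $\tau$ (namely all the ones $\gamma_j$ meets), and an arc parallel on $\tau'$ to an entire side of a complementary component $\Delta'$ of $\tau'$ covers $\tau$ by $(\ast)$; this is the second sentence. For the first sentence, any curve or lamination $\lambda$ carried by $\tau'$ has, near each complementary component $\Delta'$ of $\tau'$, a strand or a boundary leaf running parallel to an entire side of $\Delta'$ --- here, as in \cite{LM1}, one uses that $\tau'$ carries the filling lamination $\mathcal{L}$, so that $\tau'$ is recurrent and no side of a complementary component is ``skipped'' --- and therefore $\lambda$ covers $\tau$ by the case just treated.

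It then remains to establish $(\ast)$, working from the formal definition of the derived train track in subsection 2.3 of \cite{LM2}: $\tau'$ is obtained from $\tau$ by unzipping, at every cusp $c_j$ of $\tau$, along the shortest path $\gamma_j$ that is disjoint from $\mathcal{L}$ and covers $\tau$. The effect of one such unzipping is that the cusp $c_j$ is pushed along $\gamma_j$ to its far end: in the resulting track the two boundary arcs issuing from the displaced cusp each run back along a full copy of $\gamma_j$ before rejoining the part of $\partial\tau$ that was left undisturbed. Since one unzips at \emph{every} cusp of $\tau$ --- and $\tau$, carrying the filling lamination $\mathcal{L}$, has all complementary components simply connected with at least three cusps --- every cusp of $\tau'$ is such a displaced cusp sitting at the far end of some $\gamma_j$, so every side of a complementary component of $\tau'$, being an arc of $\partial\tau'$ between two consecutive cusps, carries near each of its two ends a full copy of an unzipping path. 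This is $(\ast)$.

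The main obstacle is making the ``displaced cusp'' picture rigorous, i.e.\ the honest verification of $(\ast)$. The paths $\gamma_j$ are required to be disjoint from $\mathcal{L}$ but not from one another, so one must check that unzipping at one cusp cannot destroy a full copy of some $\gamma_k$ produced by an earlier unzipping; this is where the choice of \emph{shortest} covering paths and the minimal--maximality of $\mathcal{L}$ are used. One must also handle the degenerate configurations flagged in Fact \ref{train-track-basics} (doubly singular fibers, cyclic exceptional fibers) and check that complementary components remain simply connected under unzipping. All of this is carried out in the proof of Lemma 2.9 of \cite{LM1}; the only point to highlight is that that proof in fact establishes the covering property already for an arc parallel to a single full side of a complementary component of $\tau'$, or to a single unzipping path --- which is precisely the second sentence of the Fact.
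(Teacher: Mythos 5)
Part (e) is the only substantive claim here, and the paper does not prove it; it cites Lemma~2.9 of~\cite{LM1}. Your reconstruction follows what is almost certainly the intended structure: isolate the key geometric fact $(\ast)$ that sides of complementary components of $\tau'$ contain parallel copies of the unzipping paths, observe that each unzipping path covers $\tau$ by definition, and use the fact that parallelism on $\tau$ preserves the set of $I$-fibers met. That chain correctly gives the ``second sentence'' of (e), and you rightly flag the places where $(\ast)$ itself requires care (interference between successive unzippings, degenerate fiber configurations, the role of shortest paths and of minimal--maximality).

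The gap is in your deduction of the first sentence from the second. You assert that any curve or lamination $\lambda$ carried by $\tau'$ ``has, near each complementary component $\Delta'$, a strand or a boundary leaf running parallel to an entire side of $\Delta'$,'' and you justify this by recurrence of $\tau'$. But recurrence of $\tau'$ is a property of the track (every branch lies on \emph{some} carried curve with positive weight); it does not say that a given carried curve $\lambda$ uses the branches adjacent to some full side of a complementary component without peeling off at an intermediate switch. A curve carried by $\tau'$ may have zero weight on many branches, and its outermost strand along a side of $\Delta'$ can leave that side at any switch it meets. So the step as written does not follow. The standard way to close this is to argue not about sides but about branches: show that in the derived track every branch of $\tau'$, pushed forward by the carrying map to $\tau$, already covers $\tau$ (this is where ``unzip at \emph{every} cusp along a path that covers $\tau$'' is used, since the slits chop $\tau$ into thin ribbons each running alongside a covering path). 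Once every branch of $\tau'$ covers $\tau$, any essential carried curve uses at least one branch with positive weight and therefore covers $\tau$, with no appeal to full sides at all. Your proposal would be correct if you replaced the recurrence argument by this branch-level covering statement, or if you gave a genuine outermost-strand argument proving the claim about full sides.
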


\vskip5pt

\begin{definition} \label{ngregarious}\rm
Let $\tau \subset \Sigma$ be a train track, and let $\mathcal L$ be  a  lamination carried by $\tau$.
We say that $\mathcal L$ is {\it  $n$-gregarious} with respect to    $\tau$ if $\tau$ can be derived $n$ times with respect to  $\mathcal L$,  i.e. there exists an {\it $n$-tower of derived train tracks} 
$$\tau = \tau_{0} \supset \tau_{1} \supset \ldots \supset \tau_{n}$$ 
such that   $\mathcal L$ is carried by $\tau_{n}$.

\end{definition}

\vskip10pt

We conclude  this subsection by stating  a result from an earlier paper which is  crucially used later on:

\vskip10pt

\begin{proposition}[Proposition 2.12 in \cite{LM1}] \label{towerimpliesdistance} 
Let $\tau_{0} \supset \tau_{1} \supset \ldots \supset\tau_{n}$  be  an  $n$-tower of derived  train tracks 
in $\Sigma$. Assume that  $\tau_{0}$   (and hence of any of the  $\tau_{i}$) is maximal.  Let  $D$ be a simple closed curve  carried  by $\tau_n$.  Then any  essential simple closed curve $D'$ which  satisfies
$$d_{\mathcal{C}}(D, D')\leq n$$
is carried by  $\tau_{0}$.

\end{proposition}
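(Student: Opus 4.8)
The plan is to run an induction along a geodesic edge-path in the curve complex, where the single inductive step is powered by the ``covering'' property recorded in Fact~\ref{carried-parallel}(e): a simple closed curve carried by a derived train track necessarily meets \emph{every} $I$-fiber of the train track from which it was derived. Concretely, given an essential simple closed curve $D'$ with $d_{\mathcal C}(D, D') \le n$, I would fix a geodesic edge-path $D = C_0, C_1, \ldots, C_m = D'$ in $\mathcal C^1(\Sigma)$ with $m = d_{\mathcal C}(D, D') \le n$, so that $C_j$ and $C_{j+1}$ may be realized disjointly in $\Sigma$ for each $j$. The goal is then to prove, by induction on $j$, that $C_j$ is carried by $\tau_{n-j}$; since $n - j \ge n - m \ge 0$, the train track $\tau_{n-j}$ is part of the given tower, and for $j = m$ this yields that $D'$ is carried by $\tau_{n-m}$, whence by $\tau_0$ (the track $\tau_{n-m}$ is carried by $\tau_0$ by iterating Fact~\ref{carried-parallel}(c),(d), which also gives that ``carried by'' is transitive).

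The base case $j = 0$ is exactly the hypothesis that $D = C_0$ is carried by $\tau_n$. For the inductive step, suppose $C_j$ is carried by $\tau_{n-j}$ with $j < m$, so that $n - j \ge 1$. As the tower consists of derived train tracks, $\tau_{n-j}$ is derived from $\tau_{n-j-1}$, so Fact~\ref{carried-parallel}(e) gives that $C_j$ covers $\tau_{n-j-1}$. Moreover $\tau_{n-j-1}$ is maximal (since $\tau_0$ is), so all of its complementary components are triangles. The step is then completed by the following claim, applied with $\sigma = \tau_{n-j-1}$, $C = C_j$ and $C' = C_{j+1}$:

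\emph{Claim: if $\sigma$ is a maximal train track in $\Sigma$, $C$ is a simple closed curve carried by $\sigma$ that covers $\sigma$, and $C'$ is an essential simple closed curve disjoint from $C$, then $C'$ is carried by $\sigma$.}

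To establish the Claim I would isotope $C'$ into efficient position with respect to $\sigma$ while keeping it disjoint from $C$ — this is possible because $C'$ lies in $\Sigma \smsm C$ and, as $C$ can be taken in the interior of $\sigma$, the moves that eliminate inessential bigons between $C'$ and $\partial\sigma$ take place in $\Sigma \smsm C$. Maximality of $\sigma$ then forces each arc of $C' \cap \Delta$, for every complementary triangle $\Delta$, to join two distinct sides of $\Delta$: an efficient arc with both endpoints on one side would cut off a disk meeting $\partial\sigma$ in a cusp-free subarc and could be pushed off, contradicting efficiency. What remains is to comb the subarcs of $C'$ lying inside $\sigma$ until they are transverse to the $I$-fibers, and this is precisely the place where the covering hypothesis is indispensable: since $C$ is a fiber-transverse curve hitting every $I$-fiber of $\sigma$ and $C'$ is disjoint from $C$, any failure of $C'$ to be combable into fiber-transverse position would force $C'$ to cross $C$. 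Making this last combing argument rigorous — and checking that efficient position can indeed be attained rel $C$ — is the step I expect to be the main obstacle; the remainder of the proof is routine bookkeeping along the geodesic.
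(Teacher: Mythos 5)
The proposition is quoted here from Proposition~2.12 of~\cite{LM1} and the present paper does not reprove it, so I am judging your proposal on its own terms rather than against a proof in this document. Your overall strategy---fix a geodesic $D = C_0, \ldots, C_m = D'$ in $\cal C^1(\Sigma)$ and show by induction that $C_j$ is carried by $\tau_{n-j}$, powering the inductive step by Fact~\ref{carried-parallel}\,(e)---is the natural one and is surely also the skeleton of the argument in~\cite{LM1}. The bookkeeping around the induction (the base case, the inequality $n-j \geq n-m \geq 0$, transitivity of carrying) is all correct.

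The problem is that essentially \emph{all} of the mathematical content of the proposition is concentrated in your Claim, and your proof of the Claim is a sketch with its central step missing, as you yourself concede. Two remarks. First, the ``efficient position rel $C$'' step is fine, but the reason needs stating: a bigon between $C'$ and $\partial\sigma$ cannot contain any part of $C$, since $C$ is disjoint from both of its sides ($C$ misses $C'$ by hypothesis, and $C$ misses $\partial\sigma$ because a carried curve lies in the interior of $\sigma$); so $C$ would have to lie entirely in the bigon disk and be inessential, a contradiction. Hence the bigon removal can indeed be performed in the complement of $C$. Second, and more seriously, the ``combing'' step is not ``routine bookkeeping'' that remains after the Claim is established---it \emph{is} the Claim, and hence is the entire Proposition. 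What is actually needed is an analysis of the components of $\sigma \smsm C$: because $C$ meets every $I$-fiber, each such component is a strip fibered by proper subarcs of $I$-fibers, with one horizontal side on a strand of $C$ and the other on $\partial\sigma$; one must then argue that every fold of $C'$ against the fibers sits in such a strip and can be slid along it until it exits through $\partial\sigma$, where it becomes a $\partial$-parallel arc in a complementary triangle and can be cancelled, dropping $|C' \cap \partial\sigma|$ and contradicting minimality. This must be carried out globally, since pushing one fold displaces others along $C'$. The assertion ``any failure would force $C'$ to cross $C$'' is the statement to be proved, not an argument for it. I would also flag that you downgrade the inductive hypothesis ``$C_j$ is carried by the \emph{derived} track $\tau_{n-j}$'' to the weaker ``$C_j$ covers $\tau_{n-j-1}$'' before invoking the Claim; the weaker hypothesis may suffice, but a curve carried by a once-derived track approaches every cusp of $\tau_{n-j-1}$ from \emph{both} adjacent branches in a controlled way, and it is exactly this control that a careful combing argument is likely to exploit, so discarding it prematurely is risky. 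In short: right skeleton, likely the same as the source, but the inductive step is asserted rather than proved.
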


\vskip15pt

\subsection{Tight train tracks}\label {Tighttraintracks}\hfill

\vskip10pt

Part (a) of the following definition has been introduced in ~\cite{LM2}:

\vskip10pt

\begin{definition}\label{tight}\rm 
(a)  A maximal train track $\tau \subset \Sigma$ is called {\it tight}  with respect to  some complete  decomposing system $\cal{E}$ on $\Sigma$ if the following conditions  are satisfied: 

\vskip5pt

\begin{enumerate}

\item[(1)] For any curve $E_k \in \cal{E}$  the intersection  $E_k \cap \tau$ is  a disjoint union of (possibly exceptional) $I$-fibers of $\tau$. 

\vskip5pt

\item[(2)] For every connected component $\Delta_j$ complementary to $\tau$ the intersection segments with any $E_k \in \cal{E}$  are arcs with endpoints  on two distinct sides of  $\Delta_j$.

\vskip5pt

\item[(3)] Each of the three cusps of any complementary component $\Delta_j$ is contained in some of the $E_k$. 
\end{enumerate}

\vskip5pt
\noindent
(b) A train track $\tau \subset \Sigma$ is called {\em tight} if it is tight with respect to some complete decomposing system $\cal E \in \cal{CDS}(\Sigma)$.

\end{definition}

\vskip5pt

The condition (3) of Definition \ref{tight} (a) is equivalent to stating that every singular $I$-fiber of $\tau$ lies on some of the curves $E_i \in \cal{E}$ .  We also would like to point out that the notion of a tight train track is a fairly general one: for example, a train track which is tight with respect to $ \cal{E}$ may well carry a wave with respect to $ \cal{E}$.

\vskip10pt

\begin{proposition}[Proposition 2.11 \cite{LM2}] \label{carriedcurves}
Let $\mathcal E$ be a complete decomposing system of $\Sigma$, and let $\tau$ be a maximal train track that is tight with respect to $\mathcal E$. Let $c$ be a simple closed curve  (or a finite collection of such) on $\Sigma$ that is tight with respect to $\mathcal E$ and contains a subarc $\beta$ which covers $\tau$.  Then $c$ can be carried by $\tau$.

\qed

\end{proposition}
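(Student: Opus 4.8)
The plan is to show that the subarc $\beta$, which covers $\tau$, can be extended to the whole curve $c$ while staying carried by $\tau$, by following $c$ around and checking that each time $c$ runs through a complementary component $\Delta_j$ of $\tau$, it can be pushed into $\tau$ compatibly with the fibration. The key structural input is that $\tau$ is \emph{tight} with respect to $\mathcal E$ and that $c$ itself is \emph{tight} with respect to $\mathcal E$; this is what controls how $c$ behaves in the complementary triangles of $\tau$. First I would set up the combinatorial picture: cut $\Sigma$ along $\mathcal E$ to obtain a disjoint union of pairs-of-pants, in each of which $\tau$ restricts to a collection of bands (rectangles foliated by $I$-fibers) meeting the boundary curves $E_k$, and the complementary triangles $\Delta_j$ each have their cusps lying on $\mathcal E$ by condition (3) of Definition \ref{tight}(a). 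In this normal form, both $\tau\cap P$ and $c\cap P$ are unions of arcs with endpoints on $\partial P=\mathcal E$, and tightness of $c$ means no such arc of $c$ is $\partial$-parallel in the relevant sense (in particular $c$ has no wave, so every arc of $c$ in $P$ is a seam).

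Next I would argue locally. Starting from the subarc $\beta$, which by hypothesis lies in $\tau$ transverse to all $I$-fibers and meets every $I$-fiber, I would track $c$ as it leaves $\beta$: it enters some complementary component $\Delta_j$ of $\tau$, crosses it, and re-enters $\tau$. Since $\Delta_j$ is a triangle whose three cusps each sit on a curve of $\mathcal E$, and since $c$ is tight with respect to $\mathcal E$, the arc of $c$ crossing $\Delta_j$ cannot cut off a cusp (that would produce either a $\partial$-compression against $\mathcal E$ or a wave, contradicting tightness of $c$); hence it runs from one side of $\Delta_j$ to a different side. That is exactly the condition allowing the arc to be isotoped across $\Delta_j$ into $\tau$ so as to become transverse to the $I$-fibers, i.e. carried. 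Doing this at every passage of $c$ through a complementary triangle, and using that $\beta$ already covers $\tau$ so the isotopies can be performed coherently (the fibration is consistent along $\beta$ and propagates), I would conclude that after isotopy all of $c$ lies in $\tau$ transverse to the $I$-fibers, which is the definition of being carried.

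The main obstacle I anticipate is the \emph{global coherence} of the local pushing moves: isotoping one arc of $c$ across a triangle $\Delta_j$ changes $c$ near the two sides of $\Delta_j$, and one must be sure these modifications do not create new intersections with $\mathcal E$ (destroying tightness of $c$) or force a later arc of $c$ to cut off a cusp. I expect this is handled by doing the isotopies in the surface cut along $\mathcal E$ and using that the subarc $\beta$ covering $\tau$ acts as a ``seed'': once one arc of $c$ is carried, adjacent arcs are forced into a carried position one triangle at a time, and an innermost/induction argument on the number of complementary triangles met by $c$ (or on the number of arcs of $c\cap\mathcal E$) closes the loop. A secondary point to check is that exceptional (in particular cyclic exceptional) $I$-fibers of $\tau$ do not cause trouble — but condition (1) of Definition \ref{tight}(a), which says $E_k\cap\tau$ is a union of (possibly exceptional) $I$-fibers, is precisely tailored to make $c$'s crossings of those fibers behave like ordinary ones.
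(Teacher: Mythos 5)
Your proposal contains a concrete error at the step where you convert tightness of $c$ with respect to $\mathcal E$ into a restriction on its arcs: you write that ``$c$ has no wave, so every arc of $c$ in $P$ is a seam.'' This is false. By Definition~\ref{smallwave}(b)--(c) a wave is \emph{not} $\partial$-parallel and the very definition of a curve having a wave requires the curve to already be in tight position with respect to $\mathcal E$; so tightness rules out only inessential (bigon-bounding) arcs, not waves. Indeed, the whole point of Lemma~\ref{carriedwaves}, which invokes Proposition~\ref{carriedcurves}, is to handle a curve $D$ that is tight with respect to $\mathcal D$ and that \emph{does} contain a wave. So the combinatorial control you hope to extract from $\mathcal E$-tightness is much weaker than you claim, and the subsequent step — that an arc of $c$ in a complementary triangle $\Delta_j$ ``cannot cut off a cusp'' — does not follow. (There is also a terminological confusion here: in a triangle, every essential arc, running from one side to a different one, automatically cuts off exactly one cusp; what you presumably want to exclude are inessential arcs bounding bigons with a side, but tightness with respect to $\mathcal E$ does not by itself prevent those either, since a bigon with $\partial\tau$ need not be a bigon with $\mathcal E$.)

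The deeper problem is that the argument, as written, barely uses the hypothesis that $\beta$ covers $\tau$ — it is mentioned only as something that makes the isotopies ``coherent'' and lets $\beta$ act as a ``seed'', which is not an argument. But the covering hypothesis is the engine of the proposition: without it the statement is simply false (a curve tight with respect to $\mathcal E$ that merely enters $\tau$ along a few fibers, roughly parallel to $\mathcal E$, is certainly not carried by $\tau$). A correct proof has to use covering substantively — roughly, that $\beta$ meets every $I$-fiber, hence in particular every exceptional fiber of $\tau\cap\mathcal E$, and this together with minimality of $|c\cap\mathcal E|$ forces the arcs of $c$ outside $\tau$ to cross each complementary triangle essentially and to re-enter $\tau$ compatibly with the fibration, rather than producing a bigon with $\partial\tau$. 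The ``global coherence'' issue you flag as the main obstacle is not a secondary technicality to be waved away by an induction on the number of arcs; it is exactly where the covering hypothesis must be invoked, and the proposal currently has no mechanism for doing so.
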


\vskip5pt

Complete decomposing systems $\cal D$ carried by train tracks have been investigated in  \cite{LM1}.  
It has been shown there that, for any maximal train track $\tau$, every pair-of-pants $P$ which is complementary to such a system ${\cal D}$ is either of {``eye glass shape''} or {``$\Theta$-graph shape''}.  That is: $P$ always contains precisely two of the  triangles that are  complementary components of $\tau$, say $\Delta_1$ and  $\Delta_2$. More precisely, $P$ is the regular neighborhood of the union of $\Delta_1$ and $\Delta_2$,  and of three arcs $\gamma_1, \gamma_2$  $\gamma_3$ that are carried by 
$\tau$ and have the cusps of  $\Delta_1$ and $\Delta_2$ as endpoints.  We say  that $P$  {\em has 
$\Theta$-graph shape} if  every  $\gamma_k$, for  $k = 1, 2, 3$, connects a cusp of  $\Delta_1$ to a cusp of $\Delta_2$.  On the other hand,  $P$  has  {\it eye glass shape} if only one of the  $\gamma_k$ connects a cusp of $\Delta_1$ to a cusp of $\Delta_2$, while each of the other two  $\gamma_h$ join two cusps from the same $\Delta_i$. 

\vskip10pt

\begin{remark} \label{theta-waves} \rm
If $P$ is $\Theta$-graph shaped, then any wave $\beta$ in $P$  can be  carried by $\tau$. Furthermore 
$\beta$ has to run parallel  on $\tau$ to at least one entire side of one  of  the two  connected  components complementary to $\tau$ which are contained in $P$. This is an easy consequence of the above definitions (compare also Lemma 4.4 of \cite{LM1}).

\end{remark}

\vskip10pt

The following lemma has been stated and proved as Lemma 4.6 in \cite{LM1}, for the special case where 
$\tau$ is a complete fat train track. We follow the proof given there, except for the quote of Lemma 3.9 of 
\cite{LM1}, which has been generalized and corrected in Lemma 2.11 of \cite{LM2}, 
see Proposition \ref{carriedcurves} above.

\vskip7pt

\begin{lemma} \label{carriedwaves}
Let $\tau \subset \Sigma$ be a  maximal tight  train track, and let $\tau'  \subset \tau$ be a train
track derived from $\tau$.  Let $\mathcal{D}$ be a complete decomposing system in $\Sigma$
which is carried by $\tau'$,  with the property that every  pair-of-pants complementary to
$\mathcal  D$ has $\Theta$-graph shape. Let  $D \subset \Sigma$  be an essential simple
closed curve which is tight with  respect to $\mathcal D$, and assume  that some arc $\beta$ from
the set of  arcs $D - \mathcal D$  is a wave with respect to  $\mathcal D$. Then $D$  can be carried by
$\tau$.

\end{lemma}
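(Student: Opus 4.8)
The goal is to show that the curve $D$, which is tight with respect to $\mathcal{D}$ and contains a wave $\beta$ with respect to $\mathcal{D}$, can be carried by $\tau$. My plan is to use Proposition \ref{carriedcurves}: since $\tau$ is maximal and tight (with respect to some complete decomposing system $\mathcal{E}$, by Definition \ref{tight}(b)), it suffices to produce a subarc of $D$ that covers $\tau$, after isotoping $D$ to be tight with respect to $\mathcal{E}$ as well. The wave $\beta$ is the natural candidate for such a subarc, but a priori $\beta$ lives near $\mathcal{D}$ rather than on $\tau$, so the first task is to move $\beta$ onto $\tau$.

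\textbf{Step 1: Carry the wave onto $\tau'$.} The arc $\beta$ is a wave in some pair-of-pants $P$ complementary to $\mathcal{D}$; by hypothesis $P$ has $\Theta$-graph shape. By Remark \ref{theta-waves}, any wave in a $\Theta$-graph shaped pair-of-pants can be carried by $\tau$, and moreover such a carried copy of $\beta$ runs parallel on $\tau$ to at least one entire side of one of the two triangular complementary components of $\tau$ contained in $P$. In fact, since $\mathcal{D}$ is carried by the derived train track $\tau'$, the triangles of $\tau$ sitting inside $P$ together with the connecting arcs $\gamma_1, \gamma_2, \gamma_3$ are all carried by $\tau'$ as well, so I can arrange that the carried copy of $\beta$ lies on $\tau'$ and runs parallel to an entire side of a component complementary to $\tau'$ (or, alternatively, parallel to one of the unzipping paths used to derive $\tau'$ from $\tau$).

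\textbf{Step 2: Upgrade "parallel to a side" to "covers $\tau$".} This is where Fact \ref{carried-parallel}(e) enters: since $\tau'$ is derived from $\tau$, any arc carried by $\tau'$ that runs parallel to at least one entire side of a complementary component of $\tau'$ (or parallel to an unzipping path) must in fact cover $\tau$. Applying this to the carried copy of $\beta$ from Step 1 yields a subarc $\beta'$ of (an isotoped copy of) $D$ that covers $\tau$. The delicate point here—and the one I expect to be the main obstacle—is bookkeeping the isotopies: I need a single isotopy of $D$ that simultaneously (i) makes $D$ tight with respect to $\mathcal{E}$, as required to invoke Proposition \ref{carriedcurves}, and (ii) places the wave-subarc $\beta$ onto $\tau'$ in the position described above, without creating new intersections with $\mathcal{D}$ that would destroy tightness with respect to $\mathcal{D}$, or destroying the wave. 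One must check that tightening $D$ against $\mathcal{E}$ does not interfere with the local picture near $P$; this uses that $\tau$ is tight with respect to $\mathcal{E}$ (so the $\mathcal{E}$-arcs cross complementary components of $\tau$ in a controlled way) and that $\mathcal{D}$ is carried by $\tau' \subset \tau$.

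\textbf{Step 3: Conclude via Proposition \ref{carriedcurves}.} Once $D$ has been isotoped to be tight with respect to $\mathcal{E}$ and to contain a subarc $\beta'$ covering $\tau$, Proposition \ref{carriedcurves} applies directly and gives that $D$ can be carried by $\tau$, which is the conclusion of the lemma. The overall structure thus mirrors the proof of Lemma 4.6 of \cite{LM1}, with the single substantive change being that the old appeal to Lemma 3.9 of \cite{LM1} is replaced by the corrected and generalized Proposition \ref{carriedcurves}; accordingly, the bulk of the work is verifying that the hypotheses of that proposition—tightness with respect to $\mathcal{E}$ together with the covering subarc—can be met simultaneously, which is exactly the obstacle flagged in Step 2.
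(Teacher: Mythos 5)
Your proof is correct and follows essentially the same path as the paper: $\Theta$-graph shape plus Remark \ref{theta-waves} places $\beta$ on $\tau'$ parallel to an entire side, Fact \ref{carried-parallel}(e) upgrades this to covering $\tau$, and Proposition \ref{carriedcurves} then carries $D$. The isotopy bookkeeping you flag as the main obstacle in Step 2 is dispatched in the paper by one short remark: realize all curves as geodesics for a fixed hyperbolic structure on $\Sigma$, which simultaneously makes $D$ tight with respect to both $\mathcal{D}$ and $\mathcal{E}$ without further ado.
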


\vskip10pt

\begin{proof}
By assumption $\tau$ is tight with respect to some system $\cal E \in \cal{CDS}(\Sigma)$. We can then isotope the curve $D$ so that it is tight with respect to $\cal E$, while keeping it tight with respect to $\cal D$. This can be done, for example, by making every curve geodesic with respect to some hyperbolic structure on $\Sigma$.

Let $P$ be the pair-of-pants complementary to $\mathcal D$ that   contains the wave $\beta$. By assumption  $P$ has $\Theta$-graph  shape,  so that by Remark \ref{theta-waves}  the wave 
$\beta$ is carried by $\tau'$, and $\beta$ has to run parallel  on $\tau'$ to at least one entire side of one  of  the two  connected  components complementary to $\tau'$ which are contained in $P$. Thus, by 
paragraph  \ref{carried-parallel} (e) of subsection \ref{basics-on-tts}, $\beta$ covers $\tau$. Hence we can apply Proposition  \ref{carriedcurves} to conclude that $D$ is carried by $\tau$.

\end{proof}

\vskip10pt

\begin{corollary} \label{pre-distance}
Let $\tau$ be a maximal tight train track. For some integer $n \geq 1$, consider a complete decomposing system  $\cal D \in \cal{CDS}(\tau)$ which is $n$-gregarious with respect to $\tau$, with the property that every complementary components has $\Theta$-graph shape. Then every meridian curve $D \in \cal D(V_\cal D)$ for the handlebody $V_\cal D$ determined by $\cal D$ is $(n-1)$-gregarious with respect to 
$\tau$.  

More specifically, if $\cal D$ is carried by $\tau_n$, for some $n$-tower of derived train tracks 
$\tau = \tau_0 \supset \ldots \supset \tau_{n-1} \supset  \tau_n$, then $D$ is carried by $\tau_{n-1}$.

\end{corollary}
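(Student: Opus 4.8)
The plan is to reduce Corollary~\ref{pre-distance} to Lemma~\ref{carriedwaves} together with the definition of $n$-gregariousness. Since $\cal D$ is $n$-gregarious with respect to the maximal tight train track $\tau$, by Definition~\ref{ngregarious} there is an $n$-tower of derived train tracks $\tau = \tau_0 \supset \tau_1 \supset \ldots \supset \tau_n$ with $\cal D$ carried by $\tau_n$. Truncating this tower at level $n-1$ gives an $(n-1)$-tower $\tau = \tau_0 \supset \ldots \supset \tau_{n-1}$, and I want to show that an arbitrary meridian curve $D \in \cal D(V_\cal D)$ is carried by $\tau_{n-1}$; this is exactly the ``more specific'' statement, and it immediately yields $(n-1)$-gregariousness of $D$.

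First I would dispose of the trivial case: if $D \in \cal D$, then $D$ is carried by $\tau_n \subset \tau_{n-1}$ and we are done. So assume $D \notin \cal D$. By Remark~\ref{otherdisks1}, since $D$ is a meridian curve of $V_\cal D$ not in the decomposing system $\cal D$, the curve $D$ (made tight with respect to $\cal D$, e.g. geodesic in a fixed hyperbolic structure) has a wave with respect to $\cal D$; that is, some arc $\beta$ of $D - \cal D$ is a wave in a pair-of-pants $P$ complementary to $\cal D$. Here I would want to apply Lemma~\ref{carriedwaves}, but note its hypotheses are stated for the derived track $\tau'$ directly below $\tau$, whereas here the relevant derived track carrying $\cal D$ is $\tau_n$, several steps down. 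The point is that $\tau_n$ is itself derived from $\tau_{n-1}$, and $\tau_{n-1}$ is maximal (every $\tau_i$ in a tower of derived tracks over a maximal $\tau_0$ is maximal, as noted in Proposition~\ref{towerimpliesdistance}); moreover $\tau_{n-1}$ is again tight, since tightness with respect to some complete decomposing system is preserved under deriving — this is implicit in the setup of Lemma~\ref{carriedwaves} and Corollary~\ref{pre-distance}, where $\tau'$ derived from $\tau$ inherits a tightness system. So I would apply Lemma~\ref{carriedwaves} with $\tau_{n-1}$ playing the role of ``$\tau$'' and $\tau_n$ playing the role of ``$\tau'$'': the decomposing system $\cal D$ is carried by $\tau_n$, every complementary pair-of-pants of $\cal D$ has $\Theta$-graph shape by hypothesis, $D$ is tight with respect to $\cal D$, and $\beta$ is a wave. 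The conclusion of the lemma is precisely that $D$ is carried by $\tau_{n-1}$.

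The main obstacle I anticipate is the bookkeeping about which train track is ``derived from'' which, and verifying that the intermediate track $\tau_{n-1}$ satisfies the standing hypotheses of Lemma~\ref{carriedwaves} — namely that it is maximal and tight, and that $\tau_n$ counts as ``derived from $\tau_{n-1}$'' in the sense used there. Maximality is routine from the structure of unzipping. Tightness requires a short argument: one checks that if $\tau$ is tight with respect to $\cal E$ and $\tau_1$ is obtained from $\tau$ by unzipping along paths disjoint from a carried lamination, then $\tau_1$ is tight with respect to a suitable complete decomposing system (possibly a modification of $\cal E$), using that the unzipping paths can be taken transverse to $\cal E$ or disjoint from it; this is the kind of fact established in \cite{LM1,LM2} and I would cite it rather than reprove it. Once these are in place, the corollary follows in a couple of lines: apply Lemma~\ref{carriedwaves} to get $D$ carried by $\tau_{n-1}$, and then observe that the truncated tower $\tau_0 \supset \ldots \supset \tau_{n-1}$ witnesses $(n-1)$-gregariousness of $D$ with respect to $\tau$.
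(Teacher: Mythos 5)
Your proof is correct and takes the same route as the paper: reduce via Remark \ref{otherdisks1} to the case that $D$ has a wave with respect to $\cal D$, then apply Lemma \ref{carriedwaves} with $\tau_{n-1}$ and $\tau_n$ playing the roles of $\tau$ and $\tau'$ to conclude that $D$ is carried by $\tau_{n-1}$ and hence $(n-1)$-gregarious. The paper's own proof is a two-line compression of exactly this and does not explicitly address the point you carefully flag, namely that $\tau_{n-1}$ must itself be maximal and tight for Lemma \ref{carriedwaves} to apply; your version is thus, if anything, more complete.
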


\vskip10pt

\begin{proof}
Any curve $D$ which bounds an essential disk in $V_\cal D$ must  either belong to $\cal D$, or 
else  contain a wave with respect to $\cal D$  (see Remark \ref{otherdisks1}). Hence  Lemma 
\ref{carriedwaves} shows that the curve $D$ is $(n-1)$-gregarious on $\tau$.

\end{proof}

\vskip10pt

\section{Transverse train tracks}\label{transeverse}

\vskip10pt

We will now consider simultaneously two train tracks $\tau_1$ and $\tau_2$ on $\Sigma$, and for simplicity we assume that both are maximal. In order to work with them, they have to be placed in a ``tight position'' with respect to each other. To control this, we observe that after a suitable isotopy we may assume that the closure of every complementary component of $\tau_1 \cup \tau_2$ is an {\it $m$-gon}, with vertices that are either cusp points on $\partial \tau_1$ or $\partial \tau_2$, or they are {\it corners}, i.e. points that belong to the intersection $\partial\tau_1 \cap \partial\tau_2$. Hence the sides 
of these $m$-gons are arcs from $\partial \tau_1$ or from $\partial \tau_2$ that do not contain any cusp point  in their interior.

\begin{definition} \label{transverse}  \rm 
Two maximal 
train tracks $\tau_1, \tau_2 \subset \Sigma $ will be called  {\it transverse}  if the following conditions hold:

 \begin{enumerate}

\item Each connected component of  $\tau_1 \cap \tau_2$, with the two inherited $I$-fiberings, is homemorphic to the standard square with horizontal and vertical interval fibers.

In particular, every intersection arc of $\partial \tau_i$ with $\tau_j$, for $\{i, j\} = \{1, 2\}$, is a fiber in the $I$-fibering of $\tau_j$. Furthermore the intersection $\tau_1 \cap \tau_2$ does not contain any of the cusp points on either $\partial \tau_1$ or $\partial \tau_2$.

\item Every complementary component of $\tau_1 \cup \tau_2$ in $\Sigma$ is an $m$-gon (as defined above), with $m \geq 3$.

\end{enumerate}

\end{definition}

\vskip10pt

\begin{proposition} \label{minimalintersections}
Let $\tau_1, \tau_2 \subset \Sigma $ be maximal  transverse train tracks on $\Sigma$. 
If ${\cal L}_1, {\cal L}_2$  are laminations covering $\tau_1$ and  $\tau_2$ respectively, then 
${\cal L}_1$ and ${\cal L}_2$ intersect minimally.
\end{proposition}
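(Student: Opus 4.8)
The goal is to show that two laminations $\mathcal L_1, \mathcal L_2$ covering transverse maximal train tracks $\tau_1, \tau_2$ realize the geometric intersection number, i.e.\ no two leaves can be isotoped to reduce intersections. The natural strategy is the standard bigon criterion: a pair of (geodesic, or at least transverse) laminations fails to intersect minimally precisely when some complementary region of $\mathcal L_1 \cup \mathcal L_2$ is a bigon, i.e.\ an embedded disk bounded by one subarc of a leaf of $\mathcal L_1$ and one subarc of a leaf of $\mathcal L_2$ meeting at exactly two points. So the plan is to assume such a bigon $B$ exists and derive a contradiction with transversality.

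First I would set up the picture inside the train tracks. Since $\mathcal L_i$ covers $\tau_i$, every leaf of $\mathcal L_i$ is carried by $\tau_i$ and meets every $I$-fiber of $\tau_i$; in particular $\mathcal L_i$ is everywhere transverse to the $I$-fibers of $\tau_i$, and the carrying map lets us push the bigon $B$ into the surface so that its two sides are subarcs of $\tau_1$ and $\tau_2$ respectively, transverse to the respective $I$-fiberings. The key structural input is Definition \ref{transverse}: each component of $\tau_1 \cap \tau_2$ is a standard square whose two fiberings are the horizontal and vertical ones, and each complementary component of $\tau_1 \cup \tau_2$ is an $m$-gon with $m \geq 3$. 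I would analyze how the bigon $B$ decomposes along $\tau_1 \cap \tau_2$ and along the complementary $m$-gons of $\tau_1 \cup \tau_2$.

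The heart of the argument is a combinatorial Euler-characteristic / index count. The bigon $B$ has exactly two "outward" corners (the two intersection points of its two sides), so it contributes a fixed, small amount of "turning." On the other hand, decompose $B$ into pieces by its intersection with $\tau_1 \cup \tau_2$: pieces lying in squares of $\tau_1 \cap \tau_2$, and pieces lying in complementary $m$-gons. Inside a square, a leaf-of-$\mathcal L_1$ arc runs horizontally and a leaf-of-$\mathcal L_2$ arc runs vertically, so they cross transversally and contribute no bigon-forming corner; the relevant sub-pieces of $B$ there are themselves squares or are controlled. In a complementary $m$-gon with $m \geq 3$, any sub-disk of $B$ cut off by the leaves has at least three sides on the boundary pattern, so it carries nonpositive index contribution. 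Adding up, the total index of $B$ computed this way is $\leq 0$ (or more precisely is forced to be the value for an $m$-gon with $m\geq 3$), contradicting the fact that a genuine bigon has index corresponding to a $2$-gon. Concretely, one shows that collapsing $\tau_1 \cup \tau_2$ to its spine would force $B$ to project to an immersed $2$-gon in a graph-like object all of whose complementary faces have at least $3$ sides, which is impossible by a Gauss–Bonnet/combinatorial count. Hence no bigon exists, and $\mathcal L_1, \mathcal L_2$ intersect minimally.

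I expect the main obstacle to be the careful bookkeeping in the last step: precisely matching the "no bigon" criterion for laminations (as opposed to simple closed curves) with the index count, and handling the possibility that a side of $B$ runs along $\partial\tau_i$ or passes through corners of $\tau_1\cap\tau_2$ — i.e.\ checking that the transversality hypotheses in Definition \ref{transverse} really rule out every way the two sides of a putative bigon could be routed through the squares and $m$-gons. A clean way to package this is to first replace $\mathcal L_1 \cup \mathcal L_2$ by the union of leaves actually meeting $B$, reduce to finitely many arcs, and then invoke the classical bigon criterion for (unions of) arcs in a surface whose complementary regions together with the train-track squares have no $1$- or $2$-gons; the transversality of the square fiberings is exactly what guarantees the "no monogon, no bigon" local condition. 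Once that reduction is in place, the contradiction is immediate.
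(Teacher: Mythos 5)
Your starting point --- the bigon criterion --- is the same as the paper's, but the paper's argument is more direct than the Euler-characteristic count you sketch, and your sketch has a real gap exactly where the paper's argument does its work. The paper passes to an \emph{innermost} bigon $B$ bounded by leaves $l_1'\subset\mathcal L_1$, $l_2'\subset\mathcal L_2$, observes that innermostness forces these segments to be outermost (toward the bigon side) on the branches of $\tau_1,\tau_2$ carrying them, and then uses the \emph{covering} hypothesis to conclude that $B$ meets $\tau_1\cup\tau_2$ only in a collar of $\partial B$: if $\tau_i$ reached into the core of $B$, the covering of $\tau_i$ by $\mathcal L_i$ would push a leaf into $B$, creating a smaller bigon. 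Hence the complement of that collar inside $B$ is a full complementary component of $\tau_1\cup\tau_2$ inheriting bigon shape, which directly contradicts the $m\geq 3$ requirement in Definition~\ref{transverse}.

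The gap in your plan is that the covering hypothesis, although you cite it, does not actually feed into your index count. You use covering only to say ``the sides of $B$ lie in $\tau_1,\tau_2$,'' but that follows from \emph{carried}, not \emph{covers}. Without covering, the proposition is false (a simple curve carried by both $\tau_1$ and a nearby curve carried by $\tau_2$ can easily bound bigons), so any correct proof must invoke it essentially. Concretely, if you do not first pass to an innermost bigon and then use covering to rule out interior pieces of $\tau_1,\tau_2$, the pieces of $B$ in $\tau_1\smallsetminus\tau_2$ and $\tau_2\smallsetminus\tau_1$ and the sub-disks in complementary $m$-gons are not under control: a sub-disk of $B$ in an $m$-gon need not be the whole $m$-gon, and the ``all faces have $\geq 3$ sides'' count you want does not apply to arbitrary sub-disks. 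Once you do add the ``innermost $+$ covering $\Rightarrow$ $B$ meets $\tau_1\cup\tau_2$ only in a collar'' step, the core of $B$ is exactly one complementary component and the contradiction is immediate, making the global Gauss--Bonnet bookkeeping unnecessary.
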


\vskip10pt

\begin{proof}
If the minimal intersection number is not realized by ${\cal L}_1$ and ${\cal L}_2$ then there are leaves $l_1 \in {\cal L}_1$ and $l_2 \in {\cal L}_2$ which have a pair of intersection points that can be cancelled out by an isotopy. Hence there is a complementary region of $l_1 \cup l_2$  in $\Sigma$ that is a bigon.
Either this bigon is innermost  among all such bigons,  or we can find an innermost one by passing to other leaves  $l'_1, l'_2$. 

It follows that the segments of $l'_1$ and $l'_2$ which form the bigon are outermost (on the side of the bigon) on the branches of the train tracks $\tau_1$ and $\tau_2$ respectively, which carry these segments.  Since by assumption each $\cal L_i$ covers $\tau_i$, it follows that the bigon meets 
$\tau_1 \cup\tau_2$ only in a  collar around its boundary.  Hence, the remainder of the bigon is a complementary component of  $\tau_1 \cup\tau_2$ which inherits the bigon shape. This contradicts  condition (3) of Definition   \ref{transverse} and thus  our assumption that the train tracks are transverse.

\end{proof}

Since two maximal train tracks on $\Sigma$ can never be disjoint,  one obtains directly from the above proposition: 

\begin{corollary}\label{notcarried}  Let  $\tau_1, \tau_2 \subset \Sigma $  be  maximal 
transverse train tracks, and suppose that  ${\cal L}$ is a lamination that can be carried by both  
$\tau_1$ and $\tau_2$. Then $\cal L$ can not cover both train tracks.

\qed

\end{corollary}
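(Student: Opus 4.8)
The plan is to argue by contradiction. Suppose $\mathcal{L}$ can be carried by $\tau_1$ in a position in which it covers $\tau_1$, and (after a possibly different isotopy) can also be carried by $\tau_2$ in a position in which it covers $\tau_2$; call these two positions $\mathcal{L}_1$ and $\mathcal{L}_2$. Being both isotopic to $\mathcal{L}$, the laminations $\mathcal{L}_1$ and $\mathcal{L}_2$ are isotopic to each other, so the number of their intersection points can be reduced to $0$ by an isotopy. On the other hand, Proposition \ref{minimalintersections} applies to the pair $\mathcal{L}_1$ (covering $\tau_1$) and $\mathcal{L}_2$ (covering $\tau_2$) and tells us that they already intersect minimally. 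Hence $\mathcal{L}_1$ and $\mathcal{L}_2$ must be disjoint (or literally equal). The remaining task, and the real content of the corollary, is to show that this is impossible because $\tau_1$ and $\tau_2$ genuinely cross.

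To do this I would use that two maximal train tracks on $\Sigma$ can never be made disjoint, so $\tau_1 \cap \tau_2 \neq \emptyset$; fix a connected component $R$ of it. By Definition \ref{transverse}, $R$ is a square two of whose opposite sides, say $a$ and $a'$, lie on $\partial\tau_1$ and the other two, say $b$ and $b'$, lie on $\partial\tau_2$, arranged so that the $I$-fibers of $\tau_1$ meeting $R$ are exactly the arcs joining $a$ to $a'$, and the $I$-fibers of $\tau_2$ meeting $R$ are exactly the arcs joining $b$ to $b'$; moreover each of these families consists of entire $I$-fibers of the respective train track, since the relevant endpoints lie on $\partial\tau_1$, resp. $\partial\tau_2$. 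Now $\mathcal{L}_1$ is carried by $\tau_1$, so $\mathcal{L}_1 \cap R$ is a disjoint union of arcs, each transverse to the $a$-to-$a'$ fibers; such an arc cannot cross $a$ or $a'$, since just beyond those sides one leaves $\tau_1$, so each of these arcs runs from $b$ to $b'$. Since $\mathcal{L}_1$ covers $\tau_1$ it meets every $a$-to-$a'$ fiber contained in $R$, so at least one such arc $\alpha_1 \subset \mathcal{L}_1$ exists. The symmetric argument, with the roles of $\tau_1$ and $\tau_2$ exchanged, produces an arc $\alpha_2 \subset \mathcal{L}_2 \cap R$ running from $a$ to $a'$.

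Finally I would invoke the elementary fact that in the disk $R$ an arc joining $b$ to $b'$ and an arc joining $a$ to $a'$ must intersect. Therefore $\alpha_1$ meets $\alpha_2$, i.e.\ a leaf of $\mathcal{L}_1$ crosses a leaf of $\mathcal{L}_2$, contradicting the conclusion of the first paragraph that $\mathcal{L}_1$ and $\mathcal{L}_2$ are disjoint (and it is equally incompatible with $\mathcal{L}_1 = \mathcal{L}_2$, a lamination having no transverse self-intersections). This completes the argument. The step I expect to need the most care is the local one in the square $R$: checking, from the precise description of the complementary $m$-gons and squares in Definition \ref{transverse}, that the arcs of $\mathcal{L}_i \cap R$ are forced to run between the two sides of $R$ lying on $\partial\tau_j$ (for $j\neq i$), and that the covering hypothesis really does supply such an arc. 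Everything else reduces to Proposition \ref{minimalintersections}, the impossibility of disjoining two maximal train tracks, and the Jordan-curve-type observation above.
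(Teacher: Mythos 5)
Your proof is correct and takes essentially the same route as the paper's (which simply asserts that the corollary follows directly from Proposition \ref{minimalintersections} together with the fact that two maximal train tracks on $\Sigma$ cannot be disjoint); you have just spelled out, via the analysis of the arcs of $\mathcal{L}_1$ and $\mathcal{L}_2$ inside a square component $R$ of $\tau_1 \cap \tau_2$, the "directly obtains" step that the authors leave implicit.
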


\vskip10pt

\begin{proposition} \label{new-distance-lemma}
Let $\tau$ and $\tau'$ be maximal transverse  tight train tracks, and let $n \geq 1$ be any integer. 
Let  $\cal D \in \cal{CDS}(\tau)$ be $n$-gregarious with respect to $\tau$, and let 
$\cal E \in \cal{CDS}(\tau')$  be $2$-gregarious with respect to $\tau'$. Furthermore assume that all 
of the complementary components of $\cal D$ and  $\cal E$ in $\Sigma$ have $\Theta$-graph shape, 
with respect to $\tau$ and $\tau'$ respectively. 

Then  the handlebodies $V_\cal D$ and $W_\cal E$ determined by $\cal D$ and $\cal E$ respectively 
satisfy:
$$d(V_{\cal D}, W_{\cal E}) \geq n -1$$

\end{proposition}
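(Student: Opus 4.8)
The plan is to argue by contradiction: suppose that $d(V_{\cal D}, W_{\cal E}) \leq n-2$, and derive a violation of Corollary \ref{notcarried} using the tower machinery developed above. The starting point is that, by Corollary \ref{pre-distance} applied to $\tau$ and $\cal D$, every meridian curve $D \in \cal D(V_{\cal D})$ is $(n-1)$-gregarious with respect to $\tau$; in particular, if $\cal D$ is carried by $\tau_n$ in an $n$-tower $\tau = \tau_0 \supset \cdots \supset \tau_n$, then each such $D$ is carried by $\tau_{n-1}$. Symmetrically, by the $2$-gregarious hypothesis on $\cal E$ and Corollary \ref{pre-distance} applied to $\tau'$, every meridian curve $E \in \cal D(W_{\cal E})$ is $1$-gregarious with respect to $\tau'$, i.e. carried by $\tau'_1$ for the corresponding $2$-tower $\tau' = \tau'_0 \supset \tau'_1 \supset \tau'_2$.

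Next I would feed the curve $D$ carried by $\tau_{n-1}$ into Proposition \ref{towerimpliesdistance}. Since $\tau$ is maximal and $\tau = \tau_0 \supset \cdots \supset \tau_{n-1}$ is an $(n-1)$-tower of derived train tracks, that proposition says any essential simple closed curve $D'$ with $d_{\cal C}(D, D') \leq n-1$ is carried by $\tau_0 = \tau$. Now choose $D \in \cal D(V_{\cal D})$ and $E \in \cal D(W_{\cal E})$ realizing the distance $d(V_{\cal D}, W_{\cal E})$; by the contradiction hypothesis $d_{\cal C}(D,E) \leq n-2 \leq n-1$, so $E$ is carried by $\tau$. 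On the other side, $E$ is carried by $\tau'_1$, which is derived from $\tau'$, hence by Fact \ref{carried-parallel}(e) the curve $E$ covers $\tau'$. So $E$ is a lamination (here, a simple closed curve viewed in $\cal{PML}(\Sigma)$) that can be carried by $\tau$ and covers $\tau'$. To invoke Corollary \ref{notcarried} I also need $E$ to cover $\tau$ (not merely be carried by it); this is where I would use the stronger conclusion of Proposition \ref{towerimpliesdistance} together with the fact that $E$ is itself $1$-gregarious on $\tau'$ — or, more cleanly, run the symmetric argument to also get that $D$, which covers $\tau$ (being carried by $\tau_{n-1}$, derived from $\tau$, hence covering $\tau$ by Fact \ref{carried-parallel}(e)) is at distance $\leq n-2$ from $E$ and is therefore carried by $\tau'$. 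Then $D$ is carried by $\tau'$ and covers $\tau$, contradicting Corollary \ref{notcarried} applied to the maximal transverse pair $\tau, \tau'$ with $\cal L = D$.

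So the cleaner route, which I would actually write up, is: let $D, E$ realize $d(V_{\cal D}, W_{\cal E})$; $D$ covers $\tau$ and $E$ covers $\tau'$ by the gregariousness-plus-Fact \ref{carried-parallel}(e) observations above. Using Proposition \ref{towerimpliesdistance} with the $(n-1)$-tower under $\tau$: if $d_{\cal C}(D,E) \leq n-1$ then $E$ is carried by $\tau$. But $E$ covers $\tau'$ and is carried by $\tau$, so $E$ cannot cover $\tau$ (Corollary \ref{notcarried}); yet $E$ carried by $\tau$ and the transversality of $\tau, \tau'$ forces, via Proposition \ref{minimalintersections} / Corollary \ref{notcarried}, that $E$ be disjoint in an inadmissible way — i.e. we reach the contradiction, showing $d_{\cal C}(D,E) \geq n$... which is actually stronger than claimed, so I must be careful about the exact bookkeeping of which tower has which length. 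The honest statement is that $D$ is only guaranteed carried by $\tau_{n-1}$, giving an $(n-1)$-tower, so Proposition \ref{towerimpliesdistance} yields: any $D'$ with $d_{\cal C}(D, D') \leq n-1$ is carried by $\tau$; combined with $E$ covering $\tau'$ and Corollary \ref{notcarried}, we get $d_{\cal C}(D,E) \geq n$, hence $d(V_{\cal D},W_{\cal E}) \geq n$ — and the weaker $\geq n-1$ in the statement leaves the needed slack, so no contradiction-hypothesis is even needed; the estimate is direct.

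The main obstacle I anticipate is the covering-versus-carried distinction: Corollary \ref{notcarried} needs one lamination that is carried by both $\tau$ and $\tau'$ but required to cover both, so I must verify carefully that the curve produced by Proposition \ref{towerimpliesdistance} is not only carried by $\tau$ but that the hypothesis configuration genuinely forces a cover. Concretely, the subtle point is that $E \in \cal D(W_{\cal E})$ realizing the distance covers $\tau'$ only because it is $1$-gregarious there (Fact \ref{carried-parallel}(e) applies to the derived track $\tau'_1$), and simultaneously it is carried by $\tau$ via Proposition \ref{towerimpliesdistance}; then Corollary \ref{notcarried} with $\cal L = E$ — which is carried by $\tau'$ (since carried by $\tau'_1$) and covers $\tau'$ — gives that $E$ cannot be carried by $\tau$ as well while $\tau,\tau'$ are transverse, unless... here I need to double-check that "carried by $\tau$" for a curve that covers $\tau'$ is exactly the forbidden configuration, i.e. that being carried by $\tau$ and covering the transverse track $\tau'$ is impossible. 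That is the heart of the matter and where the transversality condition (3) of Definition \ref{transverse}, through Proposition \ref{minimalintersections}, must be brought to bear. Everything else is routine tracking of tower lengths.
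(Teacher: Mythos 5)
Your proposal assembles the right tools (Corollary \ref{pre-distance}, Fact \ref{carried-parallel}(e), Proposition \ref{towerimpliesdistance}, Corollary \ref{notcarried}), but it stalls exactly at the place you yourself flag as ``the heart of the matter,'' and the ``cleaner route'' you ultimately write up does not close the gap. Corollary \ref{notcarried} says only that a lamination carried by \emph{both} $\tau$ and $\tau'$ cannot \emph{cover both}; it does not say that a curve covering $\tau'$ cannot simultaneously be \emph{carried by} $\tau$. So from ``$d_{\cal C}(D,E) \leq n-1$ implies $E$ is carried by $\tau$'' together with ``$E$ covers $\tau'$,'' the only thing Corollary \ref{notcarried} yields is that $E$ does not \emph{cover} $\tau$ --- which is perfectly consistent and gives no contradiction. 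Your final ``honest statement,'' which even claims the stronger bound $d_{\cal C}(D,E)\geq n$, is therefore unsupported; the fact that you ended up seeming to prove more than asked was indeed the red flag you should have chased down.

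The missing idea, and the route the paper actually takes, is to push the conclusion of Corollary \ref{notcarried} one more step: since $E$ covers $\tau'$ (because it is $1$-gregarious on $\tau'$, via Fact \ref{carried-parallel}(e)), Corollary \ref{notcarried} forces $E$ \emph{not} to cover $\tau$; then the \emph{contrapositive} of Fact \ref{carried-parallel}(e) says that $E$ is not carried by \emph{any once-derived} train track $\tau_1 \subset \tau$. This is strictly stronger than ``$E$ might be carried by $\tau$ but doesn't cover it,'' and it is the statement you need. One then applies Proposition \ref{towerimpliesdistance} not to the full $(n-1)$-tower starting at $\tau_0=\tau$ (which only controls curves carried by $\tau$, not $\tau_1$), but to the $(n-2)$-tower $\tau_1 \supset \cdots \supset \tau_{n-1}$: since $D$ is carried by $\tau_{n-1}$ and $E$ is \emph{not} carried by $\tau_1$, any curve at distance $\leq n-2$ from $D$ would be carried by $\tau_1$, hence $d_{\cal C}(D,E) \geq n-1$. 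The bound $n-1$ falls out directly (no contradiction hypothesis needed), and the tower-length bookkeeping is consistent, unlike in your version.
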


\vskip10pt

\begin{proof}
We apply Corollary \ref{pre-distance} to obtain that any meridian curve $D$ for $V_\cal D$ is 
$(n-1)$-gregarious on $\tau$. Similarly, any meridian curve $E$ for $W_\cal E$ is 1-gregarious on 
$\tau'$.

Thus, by paragraph \ref{carried-parallel} (e) of subsection \ref{basics-on-tts}, the curve $E$ covers $\tau'$.  But $\tau'$ is transverse to $\tau$, so that by Corollary \ref{notcarried} the curve $E$ can not cover $\tau$.  Hence, again by paragraph \ref{carried-parallel} (e) of subsection \ref{basics-on-tts}, 
$E$ is not carried by any train track $\tau_1$ derived from $\tau$. Thus we can now apply Proposition \ref{towerimpliesdistance} to conclude that  $d(D, E) \geq n-1$. This shows that
$d( V_{\cal D}, W_{\cal E}) \geq n-1$, as claimed.

\end{proof}

\vskip10pt

\begin{lemma} \label{smallnbds} 
(a) Let $U \subset \cal{PML}(\Sigma)$ be any non-empty open set.
Then there is a maximal tight train track $\tau \subset \Sigma$ such that $U$ contains $\cal{PML}(\tau)$.

\vskip5pt

\noindent 
(b)  
Every non-empty open set  $\widehat U  \subset \cal{PLM}(\Sigma)^2$ contains the product 
$\cal {PML}(\tau) \times \cal {PML}(\tau')$, for some  transverse maximal tight train tacks $\tau$ and $ \tau'$.

\end{lemma}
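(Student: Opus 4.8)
The plan is to prove part (a) first and then bootstrap to part (b) by a product argument, using the fact that $\cal{PML}$ of a train track is the projectivization of a finite-dimensional cone of non-negative weights. First I would recall that for a maximal (in fact, just filling) train track $\sigma$, the set $\cal{PML}(\sigma)$ is a cell in $\cal{PML}(\Sigma)$ of full dimension $6g-7$, carved out by the switch conditions and the non-negativity conditions on the branch weights, and that as $\sigma$ is split (unzipped), the cells $\cal{PML}(\sigma')$ for $\sigma' $ carried by $\sigma$ form arbitrarily fine subdivisions of $\cal{PML}(\sigma)$: the diameter of $\cal{PML}(\sigma')$ (in any fixed metric on the sphere $\cal{PML}(\Sigma)$) tends to $0$ as the combinatorial complexity of the splitting sequence grows. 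This is the standard fact underlying Thurston's construction of $\cal{PML}$; I would cite \cite{LM1} or Penner--Harer for it. Starting from \emph{any} maximal tight train track $\sigma_0$ whose cell $\cal{PML}(\sigma_0)$ meets $U$ (such $\sigma_0$ exists because finitely many maximal tight train tracks cover $\cal{PML}(\Sigma)$, or because tight train tracks' cells cover), pick a uniquely ergodic minimal-maximal lamination $\cal L \in U \cap \cal{PML}(\sigma_0)$ in the interior of the cell; then $\cal L$ is carried by $\sigma_0$ and, since splitting can be iterated indefinitely on a minimal-maximal lamination, there is a splitting sequence $\sigma_0 \succ \sigma_1 \succ \cdots$ with $\cal L$ carried by each $\sigma_k$ and $\mathrm{diam}(\cal{PML}(\sigma_k)) \to 0$; for $k$ large, $\cal{PML}(\sigma_k) \subset U$.

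The one subtlety I would have to address in part (a) is the adjective \emph{tight}: the $\sigma_k$ produced by a generic splitting sequence need not be tight with respect to any complete decomposing system. To fix this I would invoke the construction from \cite{LM1}/\cite{LM2} that, given any maximal train track $\sigma$, produces a maximal \emph{tight} train track $\tau$ carried by $\sigma$ (equivalently, one replaces $\sigma$ by a further split which is arranged to be tight with respect to some $\cal E \in \cal{CDS}(\Sigma)$ — e.g. choose $\cal E$ dual to a pants decomposition adapted to $\sigma$, then split $\sigma$ until its singular fibers all lie on $\cal E$ and conditions (1)--(3) of Definition \ref{tight} hold). Since $\tau$ is carried by $\sigma_k$, we get $\cal{PML}(\tau) \subset \cal{PML}(\sigma_k) \subset U$, which is exactly what is claimed.

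For part (b), the hard part is not the logic but making sure the two train tracks can be taken \emph{transverse}. I would argue as follows. Given non-empty open $\widehat U \subset \cal{PML}(\Sigma)^2$, shrink it to a product $U_1 \times U_2 \subset \widehat U$ with $U_i$ non-empty open. Apply part (a) to $U_1$ and $U_2$ separately to get maximal tight train tracks $\tau$ and $\tau'$ with $\cal{PML}(\tau) \subset U_1$, $\cal{PML}(\tau') \subset U_2$. These need not be transverse, but transversality is achievable by a further split of each: pick minimal-maximal uniquely ergodic laminations $\cal L_1$ in the interior of $\cal{PML}(\tau)$ and $\cal L_2$ in the interior of $\cal{PML}(\tau')$, put them in minimal (geodesic) position so that $\cal L_1 \cup \cal L_2$ fills $\Sigma$ with only $m$-gon complementary regions, $m \geq 3$; then split $\tau$ along $\cal L_1$ and $\tau'$ along $\cal L_2$ far enough that $\tau$ collapses into a small neighborhood of $\cal L_1$ and $\tau'$ into a small neighborhood of $\cal L_2$. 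At that stage $\tau$ and $\tau'$ satisfy Definition \ref{transverse}: the intersection squares come from the transverse intersections of $\cal L_1$ with $\cal L_2$, and the complementary components of $\tau \cup \tau'$ are, up to collars, the complementary $m$-gons of $\cal L_1 \cup \cal L_2$, hence have $m \geq 3$. Re-tighten each of the split train tracks as in part (a) (the tightening split is carried, so it does not destroy transversality once the tracks are already thin collars of disjointly-filling laminations — here one checks that a further carried split of both keeps the square-intersection and $m$-gon conditions). Since all these operations only pass to sub-cells, we still have $\cal{PML}(\tau) \subset U_1$ and $\cal{PML}(\tau') \subset U_2$, so $\cal{PML}(\tau) \times \cal{PML}(\tau') \subset U_1 \times U_2 \subset \widehat U$, with $\tau, \tau'$ transverse maximal tight train tracks, as required. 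The main obstacle, as indicated, is verifying that ``tight'' and ``transverse'' can be imposed \emph{simultaneously} after splitting — i.e. that the tightening splits of $\tau$ and $\tau'$ can be performed without reintroducing bigons or cusp-on-corner degeneracies into $\tau \cup \tau'$; this is where I would spend the real work, using Proposition \ref{minimalintersections} in reverse as a guide for what configuration to aim at.
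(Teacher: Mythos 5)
Your plan is a genuinely different route from the paper's, and unfortunately both of the places you flagged as ``the real work'' are exactly where the paper does something you haven't supplied a substitute for.

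The paper does not go through the cell-subdivision picture at all. Instead it constructs the tight train track directly from hyperbolic geometry: fix a hyperbolic metric, put a uniquely ergodic minimal-maximal $\cal L\in U$ and an arbitrary $\cal E\in\cal{CDS}(\Sigma)$ in geodesic position, and for each complementary ideal triangle $\Delta$ of $\cal L$ pass to a slightly shrunk triangle $\Delta_\epsilon$ whose boundary cusps are pushed (by a continuous adjustment of $\epsilon$ along the sides) onto arcs of $\cal E\cap\Delta$. The closure of the complement of the shrunk triangles is then $I$-fibered parallel to $\cal E$, producing a maximal train track that is \emph{tight with respect to $\cal E$ by construction}. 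Shrinking $\epsilon$ makes $\cal{PML}(\tau)$ collapse onto $\cal L$, hence into $U$. For part~(b) one runs the same construction for $\cal L$ and $\cal L'$ simultaneously in the \emph{same} hyperbolic metric, and transversality for small $\epsilon$ is automatic because two distinct geodesic minimal-maximal laminations are already in minimal position (no bigons), so their $\epsilon$-collar tracks meet in rectangles and leave only $m$-gons with $m\geq 3$.

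Your version has two concrete gaps. First, in part~(a) you assert that an arbitrary maximal train track $\sigma$ can be ``split until its singular fibers all lie on $\cal E$ and conditions (1)--(3) of Definition~\ref{tight} hold,'' and you also assume at the outset that tight maximal train tracks' cells cover $\cal{PML}(\Sigma)$. Neither is an off-the-shelf fact: tightness is a position statement about $\sigma$ relative to $\cal E$ and cannot be arranged by splitting alone (one needs an ambient isotopy and control of how $\cal E$ meets every complementary triangle). In the paper these two assertions are not cited but \emph{proved}, precisely by the $\epsilon$-neighborhood construction; your proposal uses them as a black box, so the burden of proof has just been relocated, not discharged. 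Second, in part~(b) you explicitly leave open the simultaneity of ``tight'' and ``transverse'' after the splits. This is not a small verification: splitting a track to be tight with respect to some $\cal E$ changes the track's shape in a way that can in principle create bigon or cusp-on-corner components of $\tau\cup\tau'$, and you have not shown it doesn't. The paper's construction makes this issue disappear because both tracks are built at once from geodesic data, where transversality of the underlying laminations is automatic and carries over to the tracks for small $\epsilon$. If you want to salvage the cell-subdivision route, the honest way to close both gaps is to replace your appeal to a tightening construction by exactly the $\epsilon$-collar argument the paper uses, at which point you would be reproving the paper's lemma from scratch anyway.
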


\vskip10pt

\begin{proof}  (a) The set of  uniquely ergodic minimal-maximal laminations is dense in 
$\cal{PML}(\Sigma)$ (see paragraph  \ref{laminations-basics} (c) of subsection \ref{basics-on-tts}),  
so that $U$ contains such a lamination $\cal L$.  We choose, at random, a hyperbolic structure on 
$\Sigma$, and  isotope the lamination  $\cal L$ into geodesic position.  Let $\cal E$ be any complete decomposing system of $\Sigma$,  also assumed  to be in geodesic position.

Consider a complementary component $\Delta$ of $\cal L$, which by the minimal-maximal condition on $\cal L$  is  an ideal hyperbolic triangle embedded into $\Sigma$. The (geodesic) curves of  $\cal E$ cut through $\Delta$, always entering and exiting $\Delta$ through two distinct (geodesic) boundary 
sides.  Notice that in the direction of each of the cusps of $\Delta$ there are infinitely many intersection segments of $\cal E \cap \Delta$, since $\cal E$ is a complete decomposing system and  since 
no  infinite  half-leaf of $\cal L$ can stay within any of the complementary pairs-of-pants,  as 
$\cal L$ is minimal-maximal.

For any $\epsilon > 0$ we can consider the subsurface $\Delta_\epsilon \subset \Delta$ which is given by all points of distance $\geq \epsilon$ from the boundary $\partial \Delta$. The subsurfaces $\Delta_\epsilon$ are triangles with almost geodesic boundary edges (for small values of $\epsilon$), and by a proper variation of  $\epsilon$ we can force a cusp point of its boundary to lie on one of the intersection arcs of  $\cal E \cap \Delta$. We do this for each of the three cusp points individually, and vary $\epsilon$ accordingly  in a continuous fashion  along the triangle sides, to obtain a pseudo-geodesic hyperbolic triangle $\Delta'$ which has all three cusp points on $\cal E$.

Once the pseudo-hyperbolic triangles $\Delta'_i$ are obtained for  all of the complementary components of $\cal L$,  we  fiber the complement of the union of $\cal E$ and of all $\Delta'_i$ by small geodesic arcs which run ``parallel'' to the intersection arcs of $\cal E$ with $\Sigma \smallsetminus (\cup \Delta'_i)$, so that the closure 
of $\Sigma \smallsetminus (\cup \Delta'_i)$ becomes indeed a maximal train track $\tau$ which is tight with respect to $\cal E$, and $\tau$ carries $\cal L$. 

Now, if all of the parameters $\epsilon > 0$ in the above construction have been chosen small enough, the resulting train track $\tau$ carries only  laminations $\cal L'$ which, when isotoped into  geodesic position, intersect $\cal E$ in points that are very close to intersection points of $\cal E \cap \cal L$, and the intersection angles of $\cal E$ with $\cal L'$ will be very close to the corresponding angles with $\cal L$. Hence $\cal L'$ is very close to $\cal L$ in $\cal{PLM}(\Sigma)$.  This shows that for sufficiently small $\epsilon$ the set $\cal{PML}(\tau)$ will be contained in $U$.

\vskip5pt

\noindent (b) 
By the denseness of the uniquely ergodic minimal-maximal laminations (see  paragraph 
\ref{laminations-basics} (c) of subsection \ref{basics-on-tts}) we can find a pair  $(\cal L, \cal L')$ of such laminations in $\widehat U$ which in addition satisfies $\cal L \neq \cal L'$. Hence, if we perform  the same procedure as in part (a) with respect to the same hyperbolic structure on $\Sigma$, for both laminations,  for sufficiently small $\epsilon$-parameters the resulting train tracks $\tau, \tau'$ will be tight and maximal as before, and  $\cal{PML}(\tau) \times \cal{PML}(\tau') \subset U \times U$. In addition,  up to  making $\epsilon$ even smaller, the train tracks  $\tau$ and $ \tau'$ will be transverse with respect to each other.

\end{proof}

\vskip20pt
 
 \section{Density of $SNOW$}

\vskip10pt

Let $\tau$ be a maximal train track on $\Sigma$, and let $\cal D$ be a complete decomposing system carried by $\tau$.  

\begin{definition}\label{calm}\rm
We say that $\cal D$  {\it is calm with respect to $\tau$} (or simply $\cal D$ {\it is calm}) if none of the transverse $I$-fibers of $\tau$ contains a subarc that is a wave with respect to $\cal D$. The set of such calm complete decomposing systems carried by $\tau$ will be denoted by $\cal{CCDS}(\tau)$.

\end{definition}

\vskip10pt

The proof of the following lemma is elementary and thus left to the reader. The reader should compare its statement to that of Remark \ref{theta-waves} which is also about waves on calm complete decomposing systems.


\vskip10pt

\begin{lemma} \label{Theta-equals-calm}
Let $\tau$ be a maximal train track on $\Sigma$. Then any complete decomposing system $\cal D$ carried by $\tau$ is calm if and only if every complementary pair-of-pants has $\Theta$-graph shape.

\qed

\end{lemma}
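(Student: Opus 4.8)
\textbf{Proof proposal for Lemma~\ref{Theta-equals-calm}.}

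The plan is to translate both conditions into the same geometric statement about the three connecting arcs $\gamma_1,\gamma_2,\gamma_3$ that, together with the two triangles $\Delta_1,\Delta_2$, make up an arbitrary pair-of-pants $P$ complementary to $\cal D$ (the ``eye glass'' vs.\ ``$\Theta$-graph'' dichotomy recalled right before Remark~\ref{theta-waves}). First I would fix such a $P$ and recall that $P$ is a regular neighborhood of $\Delta_1\cup\Delta_2\cup\gamma_1\cup\gamma_2\cup\gamma_3$, with each $\gamma_k$ carried by $\tau$ and running between cusps of the $\Delta_i$. The key local observation is that a transverse $I$-fiber $f$ of $\tau$ meeting $P$ runs across one of the $\gamma_k$ (or across one of the $\Delta_i$, but inside a triangle an $I$-fiber can only cut off a corner, which is $\partial$-parallel and hence not a wave). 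So the presence of a wave-subarc on some $I$-fiber of $\tau$ is governed entirely by how the $\gamma_k$ sit in $P$ relative to the boundary $\partial P$.

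Next I would argue the two implications separately. For the ``if'' direction: suppose every complementary pair-of-pants has $\Theta$-graph shape, and suppose for contradiction that some $I$-fiber $f$ of $\tau$ contains a subarc $\beta$ that is a wave with respect to $\cal D$, lying in some pair-of-pants $P$. By Remark~\ref{theta-waves}, $\beta$ must run parallel on $\tau$ to an entire side of one of the two triangles $\Delta_1,\Delta_2\subset P$ complementary to $\tau$; but $\beta$ is a subarc of a single transverse $I$-fiber of $\tau$, hence meets each branch of $\tau$ in a sub-fiber and cannot be parallel to a whole side of a complementary triangle — a side of $\Delta_i$ is an arc transverse to the $I$-fibers, not contained in one. (Equivalently: in a $\Theta$-shaped $P$ a single $I$-fiber crossing one $\gamma_k$ has both endpoints on the two distinct boundary components of $P$ that $\gamma_k$ separates/connects, so the resulting arc is a seam, not a wave.) This contradiction shows $\cal D$ is calm.

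For the ``only if'' direction I would prove the contrapositive: if some complementary pair-of-pants $P$ has eye glass shape, exhibit an $I$-fiber of $\tau$ with a wave-subarc. In the eye glass case exactly one $\gamma_k$, say $\gamma_3$, connects a cusp of $\Delta_1$ to a cusp of $\Delta_2$, while $\gamma_1$ joins two cusps of $\Delta_1$ and $\gamma_2$ joins two cusps of $\Delta_2$. Then $\gamma_1$ together with an arc through $\Delta_1$ forms a loop encircling one of the boundary components of $P$, so a transverse $I$-fiber of $\tau$ that crosses $\gamma_1$ transversally has both its endpoints on that same boundary component of $P$ and is not $\partial$-parallel — i.e.\ it is a wave with respect to $\cal D$. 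Such an $I$-fiber exists because $\gamma_1$ is carried by $\tau$ and transverse to its $I$-fibers, so some $I$-fiber of $\tau$ meets $\gamma_1$. Hence $\cal D$ is not calm.

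I do not expect a serious obstacle here; the only point requiring a little care is the bookkeeping of which boundary component of $P$ an $I$-fiber's endpoints land on in each of the two shapes — essentially the same elementary pair-of-pants combinatorics already used implicitly in Remark~\ref{theta-waves} and in Lemma~4.4 of \cite{LM1} — which is exactly why the paper leaves the details to the reader.
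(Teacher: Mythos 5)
Your strategy of reducing to the three arcs $\gamma_1,\gamma_2,\gamma_3$ and bookkeeping which boundary components of $P$ an $I$-fiber subarc joins is the natural one (the paper leaves this lemma to the reader as ``elementary''), and your parenthetical version of the ``if'' direction is essentially right. The first version of that argument, however, misuses Remark~\ref{theta-waves}: that remark says a wave in a $\Theta$-shaped $P$ \emph{can be isotoped} so as to be carried by $\tau$, after which it runs parallel to a side of a complementary triangle; it does not say that a wave which happens to lie on an $I$-fiber already has that form. So ``$\beta$ is a sub-fiber and hence cannot run parallel to a side'' is not by itself a contradiction, since the parallel representative is an isotope of $\beta$, not $\beta$. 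Keep the direct boundary-component argument instead.

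The ``only if'' direction contains a genuine error. With $\gamma_3$ the bridge and $\gamma_1,\gamma_2$ the loops at $\Delta_1,\Delta_2$, an $I$-fiber subarc in $P$ that crosses the \emph{loop} $\gamma_1$ has one endpoint on the inner boundary circle $C_1$ encircled by $\gamma_1$ together with a side of $\Delta_1$, and the other endpoint on the \emph{outer} boundary circle $C_3$: travelling outward from $\gamma_1$, the fiber meets $\cal D$ at $C_3$ before it could ever return to $C_1$. That subarc is therefore a seam, not a wave, so the wave you exhibit does not exist. The wave in the eye-glass case actually comes from an $I$-fiber crossing the \emph{bridge} $\gamma_3$: the outer boundary circle $C_3$ runs along both sides of $\gamma_3$, so such a subarc has both endpoints on $C_3$, and it separates $P$ into one piece containing $\Delta_1\cup\gamma_1\cup C_1$ and another containing $\Delta_2\cup\gamma_2\cup C_2$, hence is not $\partial$-parallel. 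Replacing ``crosses $\gamma_1$'' by ``crosses $\gamma_3$'' (and $C_1$ by $C_3$) repairs the argument.
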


The following is a crucial ingredient for the main result of this paper:

\vskip10pt
 
\begin{proposition} \label{no-wave-tt}
Every maximal train track $\tau \subset \Sigma$ carries some complete decomposing system $\cal D$ 
which is calm.

\end{proposition}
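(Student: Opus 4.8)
The plan is to build the calm complete decomposing system $\cal D$ directly inside $\tau$ by a combinatorial construction, exploiting the fact that a maximal train track has all complementary components equal to triangles. First I would recall that by Lemma~\ref{Theta-equals-calm} it suffices to produce \emph{any} complete decomposing system carried by $\tau$ all of whose complementary pairs-of-pants have $\Theta$-graph shape; this converts the ``no wave on transverse $I$-fibers'' condition into the purely local shape condition that is easier to control during the construction. So the target becomes: exhibit a collection $\cal D = \{D_0,\dots,D_{3g-4}\}$ of disjoint, non-parallel, essential simple closed curves, each carried by $\tau$ and transverse to the $I$-fibers, that decomposes $\Sigma$ into pairs-of-pants each of which is the regular neighborhood of a $\Theta$-graph formed by two of the complementary triangles of $\tau$ joined by three arcs carried by $\tau$ running between their cusps.

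Next I would set up the construction by passing to the ``dual'' picture of $\tau$: collapse each $I$-fiber to a point to obtain a trivalent spine $G$ of $\tau$, with one vertex for each complementary triangle of $\tau$ (equivalently each cusp-triple) and edges corresponding to the branches. The condition that a curve be carried by $\tau$ transverse to the fibers corresponds to its being realized as a train-route (an immersed path smooth at the switches) on $G$; a collection of disjoint carried curves corresponds to a ``multicurve'' of such routes that can be simultaneously embedded on $\tau$ (i.e.\ they may be pushed off one another across the width of the branches). The goal is then to find a system of such curves cutting $\Sigma$ into $2g-2$ pieces, each of $\Theta$-graph shape. I would proceed by induction, or more cleanly by a direct packing argument: choose, for each complementary triangle $\Delta_i$ of $\tau$, a small ``tripod'' curve running around the three cusps of $\Delta_i$ — no, that gives eye-glass pieces — so instead I would pair up the triangles. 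Using that $\Sigma$ has $4g-4$ complementary triangles (a maximal train track on a genus $g$ surface has $2(2g-2)$ triangles) and $3g-3$ branches' worth of room, pair the $4g-4$ triangles into $2g-2$ pairs, and for each pair $(\Delta,\Delta')$ build a curve (or a small collection of curves) that is the boundary of a regular neighborhood of $\Delta\cup\Delta'\cup(\gamma_1\cup\gamma_2\cup\gamma_3)$ with the $\gamma_k$ carried arcs joining cusps of $\Delta$ to cusps of $\Delta'$; such a neighborhood is a pair-of-pants, its boundary consists of three curves carried by $\tau$, and by construction the piece has $\Theta$-graph shape. One then checks these curves, taken over all pairs, are disjoint, essential, non-parallel, and that their complement is exactly the $2g-2$ chosen $\Theta$-shaped pairs-of-pants, hence a complete decomposing system.

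The main obstacle I expect is the \emph{global} compatibility: ensuring that the local $\Theta$-shaped pieces can be chosen so that their boundary curves actually match up to tile all of $\Sigma$ with no leftover region, that the pairing of triangles into adjacent pairs can always be achieved (this is essentially a perfect-matching statement on the trivalent spine $G$, which should follow from $G$ being connected and having an even number of vertices, but needs the matched vertices to be joined by an edge or a short path along which the connecting arcs can be routed while staying carried and disjoint), and that the resulting curves are non-parallel and essential — a degenerate pairing could produce a parallel pair or an inessential curve. A secondary technical point is to verify carefully that each boundary curve of a regular neighborhood of a $\Theta$-graph embedded in $\tau$ is genuinely carried by $\tau$ transverse to the fibers (rather than merely contained in $\tau$), which follows from the $\Theta$-graph's edges being carried arcs together with a standard ``fattening'' argument, and that the three boundary curves are mutually disjoint and each transverse to the $I$-fibers. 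Once the matching is in hand the rest is bookkeeping, so I would spend the bulk of the proof on establishing that a maximal train track's spine admits a perfect matching by edges (or near-edges) compatible with routing three disjoint carried connecting arcs, and invoke Lemma~\ref{Theta-equals-calm} to conclude calmness.
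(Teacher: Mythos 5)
Your reduction is correct and matches the paper's: both use Lemma~\ref{Theta-equals-calm} to turn ``calm'' into ``every complementary pair-of-pants has $\Theta$-graph shape,'' and both aim to pair the $4g-4$ complementary triangles of $\tau$ into $2g-2$ pairs and build, for each pair $(\Delta,\Delta')$, a $\Theta$-shaped piece by routing three disjoint arcs $\gamma_1,\gamma_2,\gamma_3$ carried by $\tau$ between the cusps. So the framework you set up is the right one.

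The genuine gap is exactly the one you flag and then leave unresolved: how to actually produce the three disjoint carried connecting arcs so that (i) their endpoints land precisely on the cusps (not merely on the singular $I$-fibers through them), (ii) they stay embedded and pairwise disjoint, and (iii) the construction can be iterated over all pairs without interference. Your proposed remedy --- a perfect matching on the trivalent spine $G$ by edges or ``near-edges'' so that the three arcs can be routed locally --- is not how this difficulty is handled, and it would likely not work as stated: there is no reason to expect a pairing of the triangles into adjacent (or near-adjacent) vertices of $G$ that simultaneously admits three disjoint short carried arcs, and the paper makes no such locality assumption. Instead, the paper picks the pairing of the $\Delta_i,\Delta'_i$ arbitrarily and constructs each $\gamma_{i,j}$ as a finite subsegment of a leaf of a uniquely ergodic minimal-maximal lamination $\cal L$ covering $\tau$, exploiting that every leaf is dense so that a segment starting parallel to one side of $\Delta_1$ and ending parallel to a side of $\Delta'_1$ always exists. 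The subtle point --- that this segment be ``properly chosen'' so the $I$-fiber isotopy taking its endpoints onto the cusps encounters no obstruction --- is handled by a minimality choice, together with a careful case analysis (directions of traversal, Figures 2 and 3) that becomes delicate precisely for the third arc $\gamma_{3,3}$ after the train track has already been split along $\gamma_{1,1}$ and $\gamma_{2,2}$. The paper then splits iteratively, reorganizes the pairing if the split track disconnects (an Euler characteristic parity argument), and at the end contracts the leftover fibered annuli to obtain the curve system $\cal D$. None of this is captured by a matching-on-the-spine argument, and since you yourself identify the routing of three disjoint carried arcs as ``the bulk of the proof'' without providing it, the proposal as written does not constitute a proof.
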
 

\vskip5pt

\begin{proof}
Since the train track $\tau$ is maximal, all of its complementary regions  in $\Sigma$ are triangles. Choose an arbitrary partition of these  regions into pairs $\Delta_i, \Delta'_i$.  Consider the pair $\Delta_1$ and 
$\Delta'_1$ of complementary regions, and denote by  $c_1, c_2, c_3$ the cusps of $\Delta_1$ and by 
$c'_1, c'_2, c'_3$ those of $\Delta'_1$. We will  build a $\Theta$-shaped pair-of-pants out of $\Delta_1$ and $\Delta'_1$ by constructing three paths  $\gamma_{i,j}$ as above, which are carried by $\tau$ and connect the cusp $c_i$ to the cusp $c'_j$.

We will show below that, using an appropriate lamination $\cal L$ carried by $\tau$, one can find such paths 
$\gamma_{i,j}$ by exhibiting finite subsegments $l_{i,j}$ of some leaf $l$ of $\cal L$. This subsegment $l_{i,j}$ has endpoints  $R, S$ on the same (singular) $I$-fibers $I_1$ and $I_2$ of $\tau$ as the two cusps $c_i$ and $c'_j$. One then moves the ends of $l_{i,j}$ by an $I$-fiber preserving isotopy so that it reaches a position 
$l_{i,j}^*$ with endpoints on $c_i$ and $c'_j$.  The problem, of course, is that the isotopy can only be  performed if $l_{i,j}$ is {\it properly chosen}: this means that the interior of $l_{i,j}$  does not intersect either of the subsegments $[R, c_i] \subset I_1$ or $[S, c'_j] \subset I_2$, i.e. the subsegments on the two singular $I$-fibers which are bounded by the cusps and the endpoints of $l_{i,j}$. The following procedure avoids this problem:

\vskip5pt

\begin{enumerate}

\item[(1)]   By the denseness of the set of uniquely ergodic minimal-maximal laminations (see paragraph 
\ref{laminations-basics} (c) of subsection \ref{basics-on-tts}) there exists such a lamination  $\cal L$ that covers $\tau$.  For any sides $\delta$ of  $\Delta_1$ and $\delta'$ of $\Delta'_1$ there are boundary leaves of $\cal L$ which run parallel along all of  $\delta$ or $\delta'$.  Since in minimal-maximal laminations every leaf is dense,  any non-boundary leaf $l$ of $\cal L$ will contain  a finite segment $l_0$ which starts with a subsegment that runs parallel to $\delta$ and ends in a subsegment that runs parallel to $\delta'$, 
see Figure 1.  By possibly passing to a smaller subsegment we can assume that $l_0$ is a minimal such finite segment of $l$, i.e., it contains no proper subsegment with the same property. It follows  that the subsegment $l_{1, 1}$ of $l_0$, obtained from $l_0$ by cutting off the initial and the terminal subarcs parallel to $\delta$ and to $\delta'$ respectively, connects the singular fiber of a cusp of 
$\Delta_1$ (say  $c_1$) to the singular fiber of a cusp of $\Delta'_1$ (say $c'_1$). Note that $l_{1, 1}$ is properly chosen in the above defined sense, by our minimality requirement on $l_{1, 1}$. Isotope $l_{1,1}$ as above along the fibers to get a path $l_{1,1}^*$ which connects $c_1$ to $c'_1$ and set  
$\gamma_{1,1} = l_{1,1}^*$.

\vskip10pt

\begin{figure}[ht]
{\epsfxsize = 4.5 in \centerline{\epsfbox{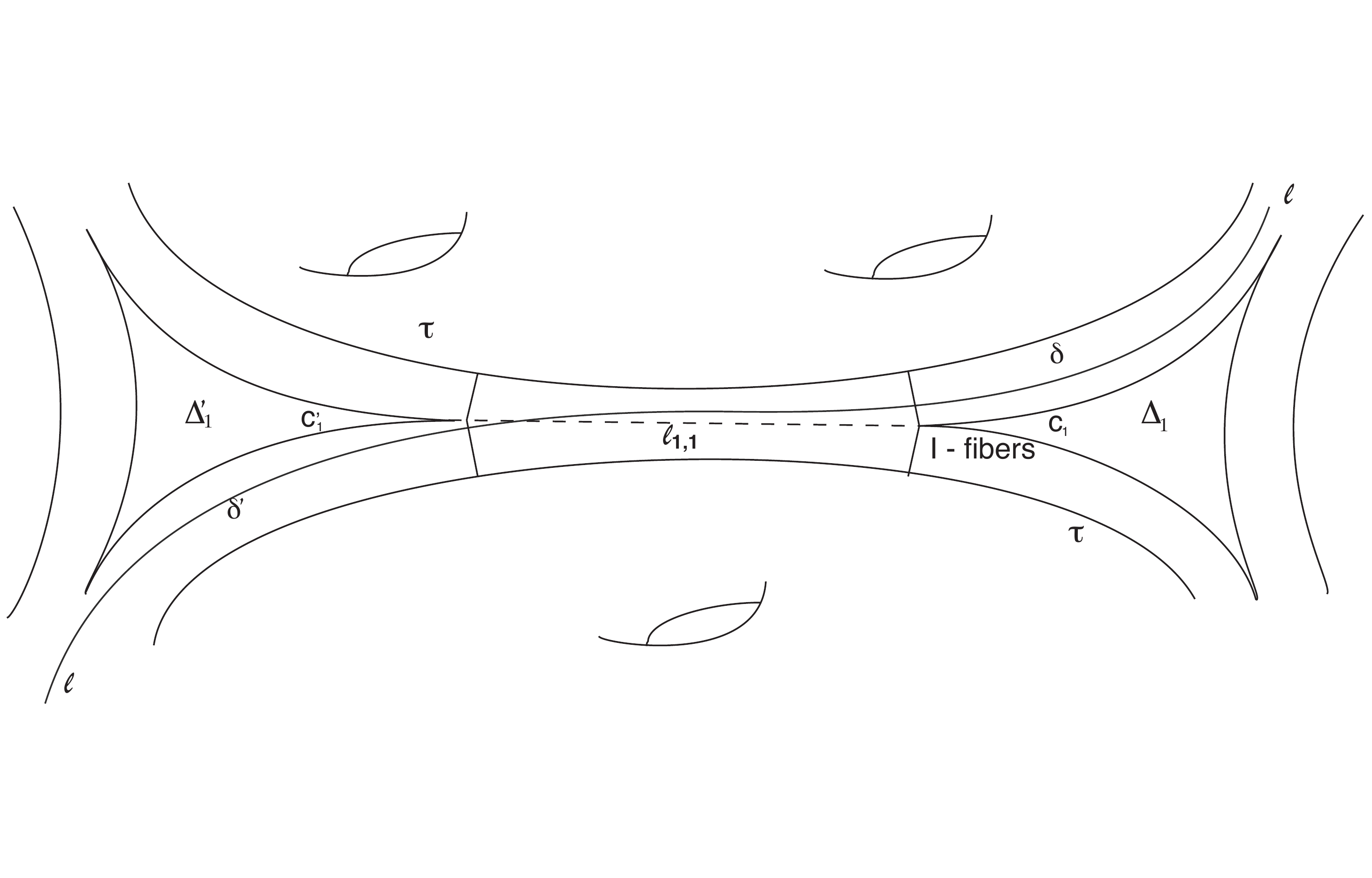}}}
\caption{}
\label{Figure 1}
\end{figure}

\vskip10pt

\item[(2)] 
Split $\tau$ open along $\gamma_{1,1}$ to get a train track $\tau'$, and consider some  uniquely ergodic minimal-maximal lamination (again called $\cal L$) that fills $\tau'$.  Let $\delta$ and $\delta'$ be the sides of $\Delta_1$ and  $\Delta'_1$ respectively which are situated opposite the cusps $c_1$ and  $c'_1$. 
Now proceed precisely in the same manner as  above in part (1).  As before, we obtain a simple arc  $l_{2, 2}^*$ carried by $\tau'$ that connects $c_2$ to $c'_2$. We set   $\gamma_{2, 2} = l_{2,2}^*$. 

\end{enumerate}

\vskip10pt

As before we split $\tau'$ open along $\gamma_{2,2}$ to obtain a train track $\tau''$, and again we are 
looking for a simple arc on some leaf of a lamination $\cal L$ that is uniquely ergodic minimal-maximal 
and which fills  $\tau''$, in order to connect $c_3$ to $c'_3$. At this point, however, the problem of  finding a properly chosen segment   $l^*_{3,3}$, becomes more delicate:

\begin{enumerate}

\item[(3)] We first chose an arc $d$ on the boundary of $\tau''$ that starts at $c_3$, runs along 
a side of $\Delta_1$, then proceeds along either $\gamma_{1,1}$ or $\gamma_{2,2}$, and finally runs along a side of  $\Delta'_1$, to end at $c'_3$.

Since $\cal L$ is  minimal-maximal and fills $\tau''$, we can find a non-boundary leaf $l$ in $\cal L$ which contains a finite  segment $d_1$ that runs parallel to $d$. We now consider an arc on $l$ which starts at an endpoint of $d_1$.  Since every leaf of $\cal L$ is dense in $\cal L$,  any sufficiently long such arc contains a segment $d_2$ of $l$ that is parallel to $d$ and $d_1$ and lies between them. As before, we assume that the arc $l$ is {\it minimal}, i.e. it doesn't contain a proper subarc which has the same properties as used above to define $l$.

If $d_1$ and $d_2$ are traversed in the same direction when travelling along $l$, then the segment on $l$ between $d_1$ and $d_2$ is the desired properly chosen arc $l_{3,3}$.  (See Figure 2.)

If not,  consider an arc on $l$ with the same initial point but which runs in the opposite direction. As above, if the arc is sufficiently long, it will contain a subarc $d_3$ that runs between $d$ and $d_2$ 
(where, as before, we assume that $d_3$ is the first occurrence of such a subarc when travelling along $l$). 
Again, if $l$ traverses $d_1$ and $d_3$ in the same direction, the subpath between $d_1$ and $d_3$ is the desired properly chosen subsegment $l_{3,3}$  (note that $d_2$ is not part of this subpath !). If the directions are  opposite, then the subpath of $l$ which connects $d_2$ to $d_3$ is the desired properly chosen segment  $l_{3,3}$. As before, set $\gamma_{3,3} = l_{3,3}^*$.  (See Figure 3.)

\end{enumerate}

\begin{figure}[ht]
{\epsfxsize = 5 in \centerline{\epsfbox{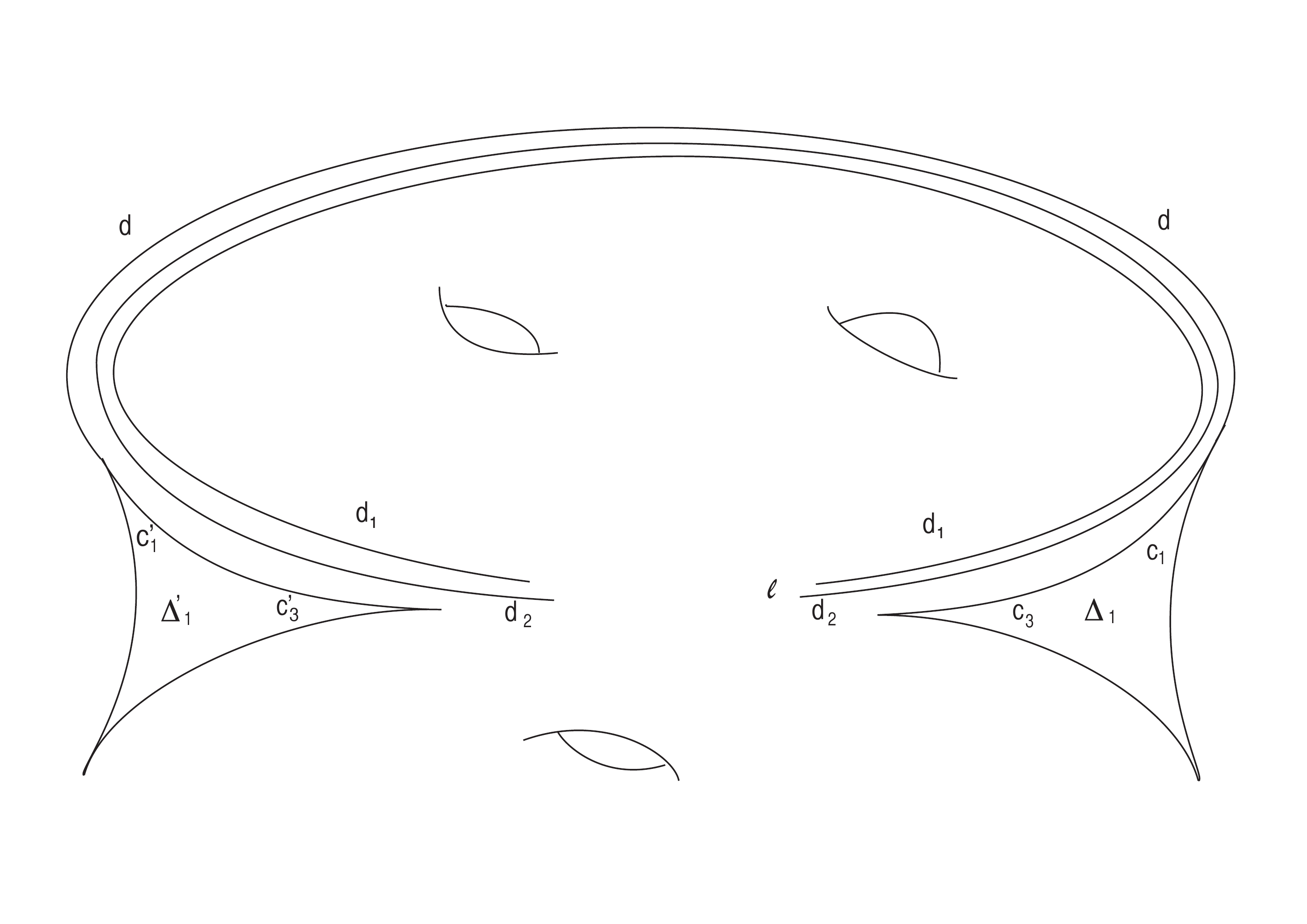}}}
\caption{}
\label{Figure 2}
\end{figure}

\begin{figure}[ht]
{\epsfxsize = 5 in \centerline{\epsfbox{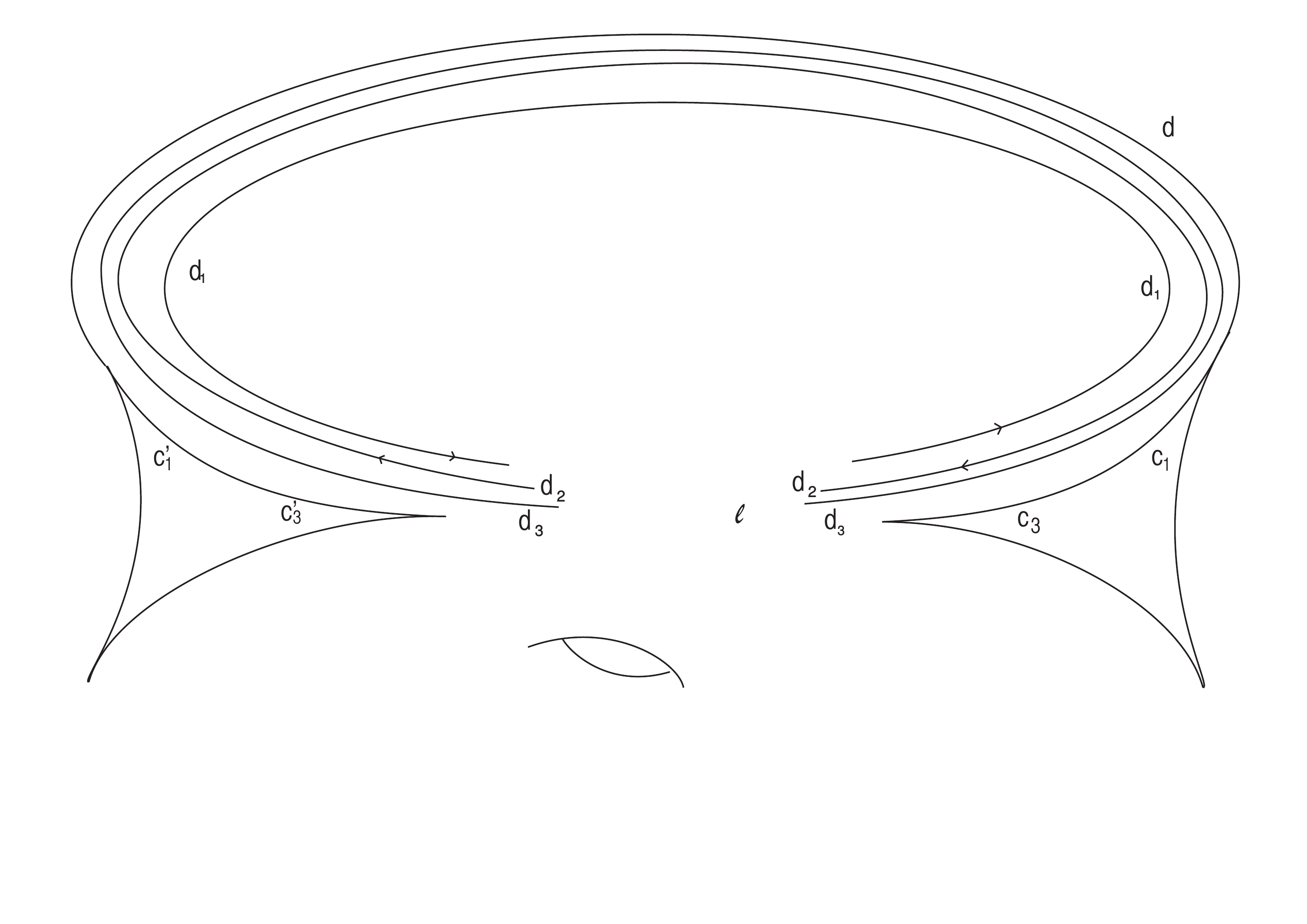}}}
\caption{}
\label{Figure 3}
\end{figure}

\vskip10pt

We now split $\tau''$ open along $\gamma_{3,3}$, to obtain a train track $\tau_1$, which has an $I$-fibering 
by subarcs of the $I$-fibers of $\tau$, and which has as complementary surfaces one pair-of-pants (which contains $\Delta_1$ and $\Delta'_1$ as subsurfaces),  while all of the other complementary subsurfaces 
coincide with the complementary subsurfaces $\Delta_i$ and $\Delta'_i$ ($i \neq 1$) of the original train track $\tau$. We now repeat the same procedure as before, for $\tau_1$ instead of $\tau$ and $\Delta_2, \Delta'_2$ instead of $\Delta_1, \Delta'_1$.

We proceed iteratively until none of the $\Delta_i, \Delta'_i$ are left: In this process, we obtain more and more complementary components of the resulting train tracks $\tau_i$ which are pairs-of-pants, and less and less triangles $\Delta_j, \Delta'_j$. It is possible that in this process the train track decomposes into more than one connected component, so that we are perhaps forced to reorganize the pairing of the triangles.  However, by an elementary Euler characteristic argument, if the boundary curve of some pair-of-pants separates the surface $\Sigma$, then on each side there must be an even number of triangles, so that we can continue the iteration procedure with $\tau_i$ being a connected component of the train track obtained after the (i - 1)-th iteration step.

After the last step of our iteration none of the triangles is left over, so that the union of all connected components of the train track doesn't have any cusps in its boundary:  It is hence a disjoint union of fibered annuli $A_i$. 

Every connected component in the complement of these annuli is a pair-of-pants which by construction has 
$\Theta$-shape with respect to the $I$-fibering of the original train track $\tau$.  We can now contract each 
of the annuli $A_i$  by an $I$-fiber-preserving isotopy until the two boundary curves coincide. The resulting curve system $\cal D$ is precisely the desired complete decomposing system: It is carried by $\tau$, and each complementary pair-of-pants has  $\Theta$-graph shape.  It follows from Lemma \ref{Theta-equals-calm} that $\cal D$ is calm 
with respect to $\tau$.

\end{proof}

\vskip10pt

\begin{corollary}\label{densecalm} For every maximal train track $\tau \subset \Sigma$  the set  
$\cal{CCDS}(\tau)$ of calm complete decomposing systems carried by $\tau$ is dense  
$\cal{PML}(\tau)$.

\vskip10pt

\end{corollary}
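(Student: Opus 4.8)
The plan is to deduce Corollary \ref{densecalm} directly from Proposition \ref{no-wave-tt} together with Lemma \ref{Theta-equals-calm}, the only extra input being that complete decomposing systems carried by a \emph{derived} train track are ``dense enough'' to witness density in $\cal{PML}(\tau)$. First I would recall that by definition $\cal{PML}(\tau)$ is the set of projective measured laminations whose support is carried by $\tau$, and that the uniquely ergodic minimal-maximal laminations carried by $\tau$ are dense in $\cal{PML}(\tau)$ (this is the relative version of the denseness statement in \ref{laminations-basics} (c): splitting $\tau$ repeatedly produces a neighborhood basis of any carried lamination inside $\cal{PML}(\tau)$, and each such lamination can be approximated by carried curve systems). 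So it suffices to produce, arbitrarily close to any given $[\cal L,\mu]\in\cal{PML}(\tau)$, a point of $\cal{CCDS}(\tau)$.

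The key step is the following refinement of Proposition \ref{no-wave-tt}: not only does $\tau$ itself carry a calm complete decomposing system, but so does every train track $\tau'$ obtained from $\tau$ by a sequence of splittings (unzippings), since $\tau'$ is again maximal and its $I$-fibering refines that of $\tau$. Given $[\cal L,\mu]\in\cal{PML}(\tau)$ and a neighborhood $U'$ of it, choose a sufficiently long splitting sequence $\tau=\tau_0\supset\tau_1\supset\dots\supset\tau_k$ directed by $\cal L$ so that $\cal{PML}(\tau_k)\subset U'$ (this is the standard fact that iterated splitting along $\cal L$ yields a neighborhood basis of $[\cal L,\mu]$ in $\cal{PML}(\tau)$). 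Apply Proposition \ref{no-wave-tt} to the maximal train track $\tau_k$ to obtain a complete decomposing system $\cal D$ carried by $\tau_k$ whose complementary pairs-of-pants all have $\Theta$-graph shape with respect to the $I$-fibering of $\tau_k$. Since the $I$-fibers of $\tau_k$ are subarcs of $I$-fibers of $\tau$, the $\Theta$-graph shape persists with respect to $\tau$ as well, hence by Lemma \ref{Theta-equals-calm} the system $\cal D$ is calm with respect to $\tau$, i.e.\ $\cal D\in\cal{CCDS}(\tau)$. Moreover $\cal D$ is carried by $\tau_k$, so $[\cal D]\in\cal{PML}(\tau_k)\subset U'$, and $U'\cap\cal{CCDS}(\tau)\neq\varnothing$. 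As $[\cal L,\mu]$ and $U'$ were arbitrary, $\cal{CCDS}(\tau)$ is dense in $\cal{PML}(\tau)$.

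The main obstacle I anticipate is justifying that iterated splitting of $\tau$ along a minimal-maximal lamination $\cal L$ produces arbitrarily small neighborhoods $\cal{PML}(\tau_k)$ of $[\cal L,\mu]$ inside $\cal{PML}(\tau)$ — i.e.\ that $\bigcap_k\cal{PML}(\tau_k)=\{[\cal L,\mu]\}$ and the $\cal{PML}(\tau_k)$ shrink to it. This is where one uses that $\cal L$ is uniquely ergodic and fills, so that the nested polyhedra $\cal{PML}(\tau_k)$ (each a face-subdivision of the weight polytope of $\tau$) have diameters tending to $0$; alternatively one can invoke directly that the carried curves of $\tau_k$ converge to $\cal L$, which is precisely the mechanism already used in the proof of Lemma \ref{smallnbds}. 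Everything else is bookkeeping: checking that $\tau_k$ remains maximal under splitting (splitting a maximal train track at a cusp along an unzipping path yields again a maximal train track, since no complementary triangle is destroyed, only subdivided), and that $\Theta$-graph shape is preserved under passing from the refined fibering of $\tau_k$ to the coarser fibering of $\tau$, which is immediate since a $\gamma_h$ carried by $\tau_k$ transverse to its fibers is a fortiori carried by $\tau$ transverse to its fibers.
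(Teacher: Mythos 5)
Your proof is correct and takes essentially the same approach as the paper: both produce a sub-train-track of $\tau$ whose carried laminations all lie in the given open set, then apply Proposition \ref{no-wave-tt} together with Lemma \ref{Theta-equals-calm} and the observation that calmness (equivalently, $\Theta$-graph shape of complementary pairs-of-pants) persists from the derived train track up to $\tau$. The only minor organizational difference is in how the small sub-train-track is produced: the paper invokes Lemma \ref{smallnbds}~(a) to get a tight maximal $\tau'$ with $\cal{PML}(\tau')\subset U$ and then splits $\tau'$ along a uniquely ergodic minimal-maximal $\cal L$ until it is carried by $\tau$, whereas you split $\tau$ directly along the approximating lamination until $\cal{PML}(\tau_k)\subset U'$, a slightly more direct route that bypasses Lemma \ref{smallnbds}~(a) and rests on the same nesting/shrinking mechanism.
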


\begin{proof} 
By Lemma \ref{smallnbds} (a) for every open set $U$ in $\cal{PML}(\tau)$ there is a maximal train track 
$\tau'$ such that $U$ contains $\cal{PML}(\tau')$. Since the set of uniquely ergodic minimal-maximal laminations is dense in $\cal{PML}(\Sigma)$ (see paragraph \ref{laminations-basics} (c) of subsection \ref{basics-on-tts}), there is such a lamination $\cal L$ in $\cal{PML}(\tau) \subset U$. Since $\cal L$ is maximal, it covers $\tau'$. Hence we can split $\tau'$ following the leaves of $\cal L$, until the $I$-fibers of $\tau'$ are small enough so that they can be isotoped simultaneously onto $I$-fibers of $\tau$, thus showing that $\tau'$ is carried by $\tau$. Now we apply Proposition \ref{no-wave-tt} to obtain a complete decomposing system $\cal D$ which is calm with respect to $\tau'$, and hence also with respect to 
$\tau$.

\end{proof}

\vskip10pt

\begin{lemma} \label{doubleNW-implies-SNOW}
Let $\tau$ and $\tau'$ be maximal transverse train tracks on $\Sigma$, and let $\cal D$ and $\cal D'$ be complete decomposing systems carried by $\tau$ and $\tau'$ respectively. Let $\tau_1$ and $\tau'_1$
be the $1$-derived train tracks with respect to $\cal D$ and $\cal D'$.  We have:

\begin{enumerate}

\item[(i)] If $\cal D$ and $\cal D'$ are calm with respect to $\tau$ and $\tau'$ respectively, then they are $SNOW$.

\item[(ii)] If $\cal D$ and $\cal D'$ are carried by $\tau_1$ and $\tau'_1$ and are $SNOW$, then they are
calm with respect to $\tau$ and $\tau'$ respectively.

\end{enumerate}

\end{lemma}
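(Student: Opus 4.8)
The plan is to treat the two statements separately, and within each to reduce everything to a statement about a single complete decomposing system carried by a single maximal train track, since the hypotheses on the pair $(\tau,\tau')$ and $(\CD,\CD')$ are completely symmetric and ``no wave'' is a condition checked one pair-of-pants at a time. So I would first isolate the following two one-sided claims and then assemble the lemma from them by symmetry:

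\emph{Claim~A (towards (i)).} If $\CD$ is calm with respect to the maximal train track $\tau$, and $\CD'$ is any complete decomposing system carried by a maximal train track $\tau'$ transverse to $\tau$, then no curve of $\CD'$ has a wave with respect to $\CD$. The idea is: suppose $D' \in \CD'$, after being made tight with respect to $\CD$, contains a subarc $\beta$ that is a wave in some complementary pair-of-pants $P$ of $\CD$. By Lemma~\ref{Theta-equals-calm}, $P$ has $\Theta$-graph shape with respect to $\tau$, so by Remark~\ref{theta-waves} the wave $\beta$ is carried by $\tau$ and runs parallel on $\tau$ to an entire side of one of the two triangles of $\tau$ inside $P$; hence by paragraph~\ref{carried-parallel}(e) the arc $\beta$ covers $\tau$. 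On the other hand $D'$ is carried by $\tau'$, so $\beta$ is carried by $\tau'$ too; applying Proposition~\ref{carriedcurves} (with the tight train track $\tau'$ and the curve $D'$, which is tight with respect to the decomposing system with respect to which $\tau'$ is tight — here I should be a little careful about \emph{which} decomposing system plays the role of $\cal E$, and may need to invoke that $\beta$ covers $\tau$ rather than $\tau'$) one wants to conclude that $\beta$ covering $\tau$ forces $D'$ to be carried by $\tau$. But then $D'$ is carried by both $\tau$ and $\tau'$ and it covers $\tau$, contradicting Corollary~\ref{notcarried}. Running Claim~A with the roles of $(\tau,\CD)$ and $(\tau',\CD')$ reversed gives that no curve of $\CD$ has a wave with respect to $\CD'$ either, which is exactly $SNOW$, proving (i).

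\emph{Claim~B (towards (ii)).} Suppose $\CD$ is carried by the $1$-derived track $\tau_1$ but is \emph{not} calm with respect to $\tau$; I want to produce a wave of some curve of $\CD'$ with respect to $\CD$ (or vice versa), contradicting $SNOW$. Not being calm means some transverse $I$-fiber $\phi$ of $\tau$ contains a subarc that is a wave with respect to $\CD$; by Lemma~\ref{Theta-equals-calm} this is the same as saying some complementary pair-of-pants $P$ of $\CD$ does \emph{not} have $\Theta$-graph shape, hence has eye-glass shape with respect to $\tau$. Now the point of the hypothesis ``carried by $\tau_1$'' is that $\CD$ is $1$-gregarious on $\tau$, so a side of the bad triangle inside $P$, or the relevant unzipping path, gives an arc that covers $\tau$ (paragraph~\ref{carried-parallel}(e)); this eye-glass configuration forces the $I$-fiber $\phi$ — equivalently a transverse $I$-fiber of $\tau$, which is part of $\tau'$ since the tracks are transverse — to carry a wave-subarc with respect to $\CD$. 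Since that $I$-fiber of $\tau$ is a fiber in the $I$-fibering of $\tau'$ and $\CD'$ covers/fills $\tau'_1$ (so $\CD'$ runs parallel to every such fiber), $\CD'$ then contains a subarc parallel to the wave, i.e. a curve of $\CD'$ has a wave with respect to $\CD$, contradicting $SNOW$. The symmetric argument handles a failure of calmness of $\CD'$ with respect to $\tau'$, so $SNOW$ plus the gregariousness hypotheses force both $\CD$ and $\CD'$ to be calm, proving (ii).

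The main obstacle I expect is the bookkeeping in Claim~B: pinning down precisely why an eye-glass-shaped complementary pair-of-pants of $\CD$ forces a transverse $I$-fiber of $\tau$ (rather than some other arc) to carry a subarc that is genuinely a wave with respect to $\CD$, and then why the transversality of $\tau$ and $\tau'$ together with $\CD'$ covering $\tau'_1$ lets that wave be realized by an actual subarc of a curve of $\CD'$ after tightening. Claim~A is essentially a repackaging of the proof of Lemma~\ref{carriedwaves} combined with Corollary~\ref{notcarried}, so the subtlety there is only the choice of the auxiliary decomposing system $\cal E$ when invoking Proposition~\ref{carriedcurves}; I would either note that $\beta$ covers $\tau$ (which is the only hypothesis on $c$ in that proposition beyond tightness) or, if needed, tighten $D'$ simultaneously with respect to both relevant systems, exactly as in the proof of Lemma~\ref{carriedwaves}. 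A figure illustrating the eye-glass pair-of-pants and the offending $I$-fiber would make Claim~B readable and is worth including.
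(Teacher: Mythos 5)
Your Claim~A has a genuine gap: from Remark~\ref{theta-waves} you only get that the wave $\beta$ runs parallel on $\tau$ to a \emph{side of a complementary triangle of $\tau$ itself}, and that is not enough to conclude that $\beta$ covers $\tau$. Paragraph~\ref{carried-parallel}(e) gives covering only for an arc parallel to a side of a complementary component of a \emph{derived} train track $\tau_1 \subset \tau$ (equivalently, parallel to an unzipping path); a side of a triangle of the undivided $\tau$ is just a short boundary arc and does not traverse $\tau$. In the proof of Lemma~\ref{carriedwaves} this issue does not arise because $\cal D$ is carried by the derived track $\tau'$ and the $\Theta$-shape is taken with respect to $\tau'$, so the side $\beta$ runs along lies in a complementary component of $\tau'$, not of $\tau$. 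In your Claim~A calmness is with respect to $\tau$, so the crucial step ``$\beta$ covers $\tau$'' is unjustified, and without it you cannot invoke Proposition~\ref{carriedcurves} or set up the contradiction via Corollary~\ref{notcarried}. The paper in fact proves~(i) by an entirely different and more elementary route: a wave $\omega$ of $\cal D$ against $\cal D'$ is, by transversality, either contained in a doubly-fibered rectangle of $\tau\cap\tau'$ (giving directly a wave on a transverse $I$-fiber of $\tau'$) or, up to small ends, contained in a complementary triangle $\Delta$ of $\tau'$, and since $\Delta$ is a triangle one can slide $\omega$ over the single cusp it cuts off so that it again becomes parallel to a transverse $I$-fiber of $\tau'$. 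Both cases contradict calmness of $\cal D'$, with no covering argument needed.

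Claim~B also has a concrete error: a transverse $I$-fiber $\phi$ of $\tau$ is \emph{not} ``part of $\tau'$'' just because the tracks are transverse. Transversality (Definition~\ref{transverse}) only says that arcs of $\partial\tau_i\cap\tau_j$ are fibers of $\tau_j$; an interior $I$-fiber of $\tau$ need not meet $\tau'$ at all. You correctly identify that the hypothesis ``$\cal D$ is carried by $\tau_1$'' supplies (via~\ref{carried-parallel}(e)) a path $\alpha$ covering $\tau$ along which the two strands of $\cal D$ bounding $\omega$ run parallel, but you stop there. The missing idea, and the heart of the paper's proof of~(ii), is to \emph{isotope the wave $\omega$ along $\alpha$} inside $\tau$, keeping its endpoints on the same leaf of $\cal D$ and never splitting over a cusp; since $\alpha$ covers $\tau$ and $\tau'$ is transverse, this isotopy eventually pushes $\omega$ into a doubly-fibered rectangle of $\tau\cap\tau'$, where it is parallel to a fiber of $\tau'$. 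Since $\cal D'$, being carried by the derived $\tau'_1$, covers $\tau'$ and hence passes through that rectangle, one then reads off an arc of $\cal D'$ parallel to $\omega$, i.e.\ a wave of $\cal D'$ against $\cal D$, contradicting SNOW. You should incorporate this sliding step explicitly; without it the jump from ``eye-glass shape'' to ``$\cal D'$ has a wave'' is not substantiated.
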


\vskip10pt

\begin{proof} (i) 
If there was a wave $\omega$, say on $\cal D$ with respect to $\cal D'$, then, by transversality of $\tau$ and $\tau'$,  there are precisely two possibilities:

Either $\omega$ is contained in one of the ``doubly-fibered'' crossing of the two train tracks, thus giving rise 
(see Figure 4)  to a wave with respect to $\cal D'$ on some of the transverse $I$-fibers of $\tau'$, in contradiction to the assumption that  $\cal D'$ is calm.

\begin{figure}[ht]
{\epsfxsize = 4.5 in \centerline{\epsfbox{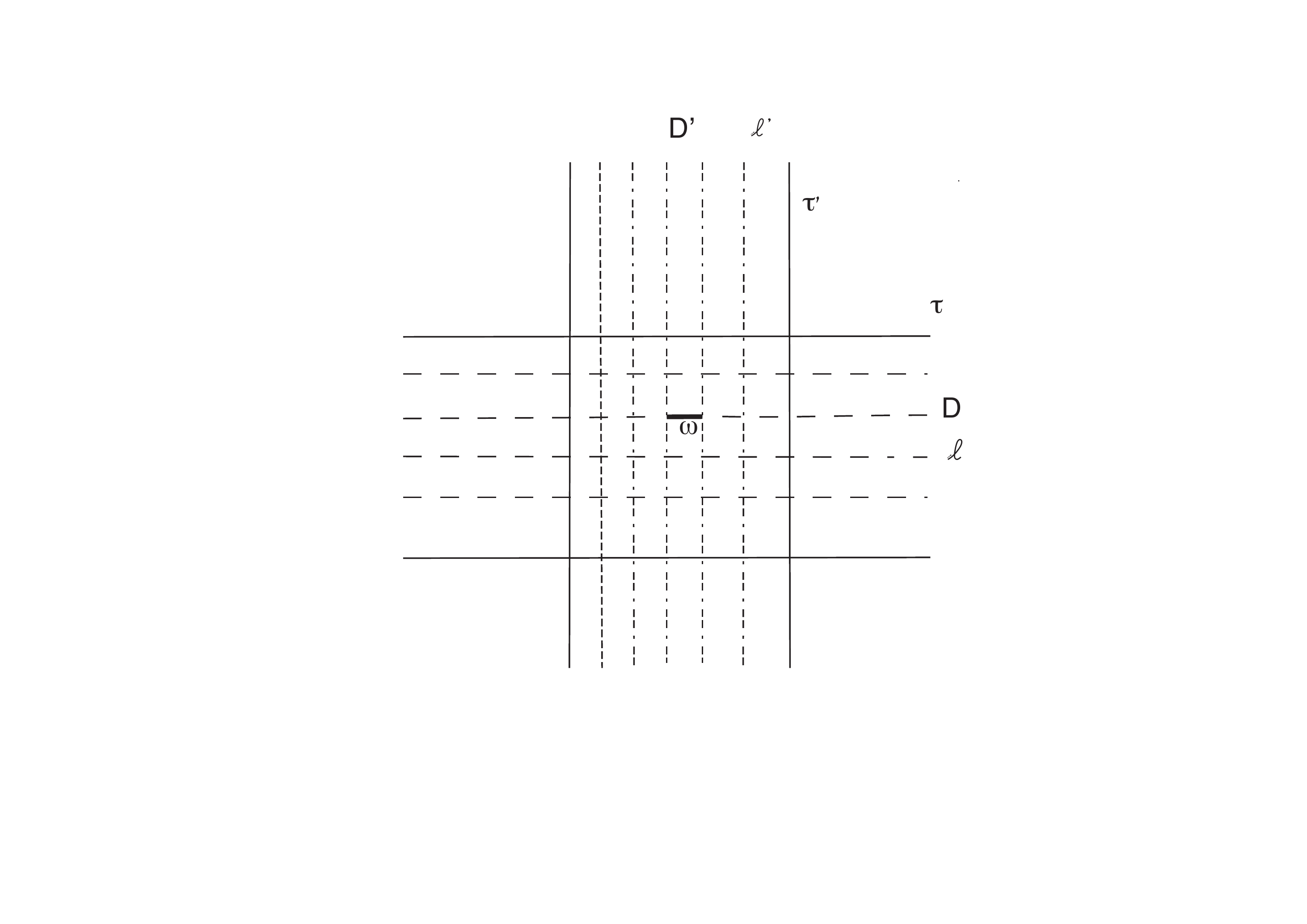}}}
\caption{The darkened arc $\omega$ is a wave on $D$ with respect to $D'$.}
\label{Figure 4}
\end{figure}

The other possibility is that the wave $\omega$ (up to small initial and terminal segments that belong to $\tau'$) is contained in one of the complementary components $\Delta$ of $\tau'$, so that $\omega$ connects two of the curves of $\cal D'$ that are outermost on $\tau'$.  But since all of the complementary components of $\tau'$ are triangles, $\Delta$  is divided by $\omega$ into a part that contains two cusps, and another part that contains a single cusp  (see Figure 5).   Isotope this wave (so that its endpoints move on $\cal D'$) in the direction of the ``single-cusp'', until it slides over that cusp, so that it will reach a position parallel to a segment of one of the transverse $I$-fibers of $\tau'$. Thus it will define a wave on such an $I$-fiber, giving the same contradiction as in the first case.

\vskip5pt

\begin{figure}[ht]
{\epsfxsize = 4.5 in \centerline{\epsfbox{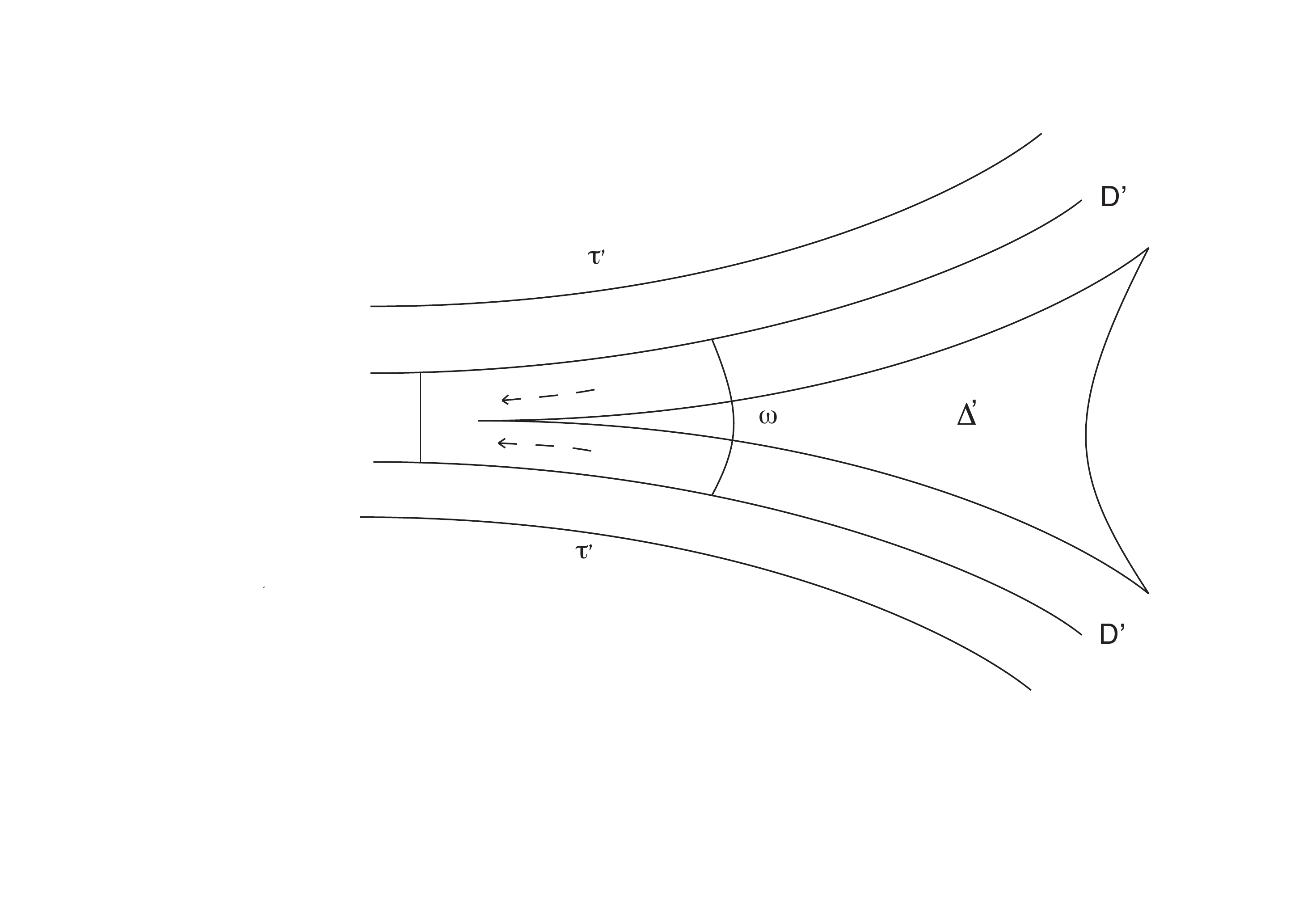}}}
\caption{}
\label{Figure 5}
\end{figure}

\vskip5pt

\noindent
(ii) Assume by way of contradiction that, say, ${\cal D}$ is not calm with respect to $\tau$. This means  that there
 is a transverse $I$-fiber of $\tau$ which contains  a wave $\omega$ with respect to ${\cal D}$. That is,  $\omega$ is  an arc with both end points on the same leaf $l \in {\cal D}$. Since  ${\cal D}$ is carried by the derived train track  $\tau_1$, the two endpoints of $\omega$ are contained in  two subarcs of $l$ which run parallel  (on $\tau$) 
to each other along a path $\alpha$ that covers $\tau$,  see \ref{carried-parallel} (e) of subsection 
\ref{basics-on-tts}. 

Hence $\omega$ can be isotoped in $\tau$ along the path $\alpha$, and during this isotopy its endpoints never  ``split'' across a cusp point of $\tau$. 

As  $\tau$ and $\tau'$ are transverse, the isotopy along $\alpha$ will eventually take $\omega$ into 
some ``doubly-fibered'' intersection rectangle of  $\tau \cap \tau'$, where $\omega$ becomes an arc which 
is parallel to a fiberof $\tau$. But that means that there is some arc on a leaf of ${\cal D}'$ which is parallel 
to $\omega$ and hence ${\cal D}'$ will contain a wave with respect to ${\cal D}$. So that ${\cal D}$ and 
${\cal D}'$ are not $SNOW$.

\end{proof}

\vskip10pt

\begin{proposition} \label{SNOW-is-dense}
The subset of pairs $({\cal D}, {\cal D'})$ that are SNOW is dense in $\cal {PML}(\Sigma)^2$.

\end{proposition}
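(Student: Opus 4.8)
The plan is to combine Lemma~\ref{smallnbds}~(b), Corollary~\ref{densecalm}, and Lemma~\ref{doubleNW-implies-SNOW}~(i) in the obvious way. Concretely, I want to show that any non-empty open set $\widehat U \subset \cal{PML}(\Sigma)^2$ contains a pair $(\cal D, \cal D')$ that is $SNOW$.

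First I would invoke Lemma~\ref{smallnbds}~(b): since $\widehat U$ is a non-empty open subset of $\cal{PML}(\Sigma)^2$, there exist maximal tight train tracks $\tau$ and $\tau'$ that are transverse with respect to each other and with $\cal{PML}(\tau) \times \cal{PML}(\tau') \subset \widehat U$. Next, by Corollary~\ref{densecalm}, the set $\cal{CCDS}(\tau)$ of calm complete decomposing systems carried by $\tau$ is dense in $\cal{PML}(\tau)$, and likewise $\cal{CCDS}(\tau')$ is dense in $\cal{PML}(\tau')$. In particular both sets are non-empty, so we may pick a calm complete decomposing system $\cal D \in \cal{CCDS}(\tau)$ and a calm complete decomposing system $\cal D' \in \cal{CCDS}(\tau')$. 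Since $\cal D \in \cal{PML}(\tau)$ and $\cal D' \in \cal{PML}(\tau')$, the pair $(\cal D, \cal D')$ lies in $\cal{PML}(\tau) \times \cal{PML}(\tau') \subset \widehat U$.

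Finally, $\tau$ and $\tau'$ are maximal and transverse, $\cal D$ is carried by $\tau$ and calm with respect to $\tau$, and $\cal D'$ is carried by $\tau'$ and calm with respect to $\tau'$; hence Lemma~\ref{doubleNW-implies-SNOW}~(i) applies and tells us that $(\cal D, \cal D')$ is $SNOW$. Thus $\widehat U$ contains a $SNOW$ pair. Since $\widehat U$ was an arbitrary non-empty open subset of $\cal{PML}(\Sigma)^2$, the set of $SNOW$ pairs is dense in $\cal{PML}(\Sigma)^2$.

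I do not expect any real obstacle here: the proposition is a routine assembly of the three preceding results, and essentially all the substance is already contained in Proposition~\ref{no-wave-tt} (existence of calm systems), in Lemma~\ref{smallnbds}~(b) (approximating an open set by a product of transverse tight train track polytopes), and in Lemma~\ref{doubleNW-implies-SNOW}~(i) (calm~$+$~calm~$\Rightarrow$~$SNOW$ for transverse train tracks). The only point one should be mildly careful about is that the calm systems produced by Corollary~\ref{densecalm} are genuine elements of $\cal{PML}(\tau)$ (respectively $\cal{PML}(\tau')$) under the embedding $\cal{CDS}(\Sigma) \subset \cal{PML}(\Sigma)$ described in paragraph~\ref{laminations-basics}~(b), so that the membership $(\cal D,\cal D') \in \widehat U$ is legitimate — but this is immediate from the definitions and requires no extra argument.
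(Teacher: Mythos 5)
Your proof is correct, and it is actually a more careful version of the argument than the one the paper gives. The paper's proof covers $\cal{PML}(\Sigma)$ by countably many sets $\cal{PML}(\tau_i)$, notes that $\cal{CCDS}(\tau_i)$ is dense in each $\cal{PML}(\tau_i)$ by Corollary~\ref{densecalm}, and then applies Lemma~\ref{doubleNW-implies-SNOW}~(i) to a pair drawn from $\cal{CCDS}(\tau_i) \times \cal{CCDS}(\tau_j)$. But Lemma~\ref{doubleNW-implies-SNOW}~(i) requires the two train tracks to be \emph{transverse}, and the paper's covering argument does not explain how that is arranged --- in particular the diagonal case $i=j$ is plainly excluded, and even for $i \neq j$ the $\tau_i$ need not be pairwise transverse. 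Moreover, since the sets $\cal{PML}(\tau_i)$ are closed rather than open, density of $\cal{CCDS}(\tau_i) \times \cal{CCDS}(\tau_j)$ inside the product does not by itself put a calm pair into an arbitrary open set $\widehat U$ meeting that product. Your route via Lemma~\ref{smallnbds}~(b) resolves both issues at once: it produces a pair of \emph{transverse} tight maximal train tracks $\tau, \tau'$ with $\cal{PML}(\tau) \times \cal{PML}(\tau') \subset \widehat U$, so that any calm pair $(\cal D, \cal D')$ taken from $\cal{CCDS}(\tau) \times \cal{CCDS}(\tau')$ automatically lies in $\widehat U$ and satisfies the transversality hypothesis. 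One minor note on emphasis: you do not actually need the full strength of density from Corollary~\ref{densecalm} (nonemptiness of $\cal{CCDS}(\tau)$ and $\cal{CCDS}(\tau')$ suffices, which is exactly Proposition~\ref{no-wave-tt}), so your citation of Proposition~\ref{no-wave-tt} in the closing paragraph as the real substance is the right instinct.
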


\vskip10pt

\begin{proof}
Every lamination on $\Sigma$ is carried by some maximal train track $\tau_i \subset \Sigma$, so that $\cal{PML}(\Sigma)$  is the union over a suitable (countable) family of subspaces $\cal{PML}(\tau_i) \subset \cal{PML}(\Sigma)$. In each $\cal{PML}(\tau_i)$ the set $\cal{CCDS}(\tau_i)$ is dense, by Corollary \ref{densecalm}.

Hence the set $\cal{CCDS}(\tau_i) \times \cal{CCDS}(\tau_j)$ is dense in $\cal{PML}(\tau_i) \times \cal{PML}(\tau_j)$, for all pairs of indices $i,j$, and the union of them is equal to $\cal{PML}(\Sigma)^2$.
 By Lemma \ref{doubleNW-implies-SNOW} (a) any pair of complete decomposing systems $(\cal D, \cal D') \in \cal{CCDS}(\tau_i) \times \cal{CCDS}(\tau_j)$ is $SNOW$. Hence the set ${SNOW}(\Sigma)$ is dense in $\cal {PML}(\Sigma)^2$.

\end{proof}

\vskip10pt

\section{Open and dense subsets of $SNOW(\Sigma)$}

\vskip10pt

We consider maximal train tracks $\tau$ and $\tau'$ on $\Sigma$, and we assume that they are transverse. Given two complete decomposing systems ${\cal D}, {\cal E} \in \cal{CDS}(\Sigma)$.  The handlebodies they determine are  denoted by $V_{\cal D}, W_ {\cal E} $ respectively. 

\vskip10pt

\begin{definition} \label{ngregarious+}\rm
We denote the subset  of  $\mathcal{PML}(\tau)$  given by all   $n$-gregarious laminations (see Definition 
\ref{ngregarious}) by ${\bf G^n}(\tau)$, and  define $\bf{G}^{n}\bf{P}(\tau)$ $= {\bf   G}^n(\tau) \cap {\bf P}(\tau)$.  Here ${\bf P}(\tau)$ denotes the set of all laminations that define positive weight on each $I$-fiber of the train 
track $\tau$.

\end{definition}

\vskip10pt

\begin{proposition} \label{open-and-full-measure}
Let $\tau$ and $\tau'$ be maximal transverse train tracks, and let $n \geq 1$ be an integer. Then one has:
\begin{enumerate}

\item[(a)] The subset 
$\bf{G}^{n}\bf{P}(\tau) \times \bf{G^2P}(\tau')$  is open and of full measure in 
$\mathcal{PML}(\tau) \times \mathcal{PML}(\tau')$.

\vskip5pt

\item [(b)] The subset $\bf{G}^{n}\bf{P}_\mathcal{CCDS}(\tau, \tau') := $
$$(\mathcal{CCDS}(\tau) \times \mathcal{CCDS}(\tau')) \cap (\bf{G}^{n}\bf{P}(\tau) \times \bf{G^2P}(\tau'))$$
 is dense in  $\bf{G}^{n}\bf{P}(\tau) \times \bf{G^2P}(\tau')$.
 
 \vskip5pt

\item[(c)]Every pair $({\cal D}, {\cal E}) \in \bf{G}^{n}\bf{P}_\mathcal{CCDS}(\tau, \tau')$
satisfies: $$d(V_{\cal D}, W_{\cal E}) \geq n -1$$

\end{enumerate}

\end{proposition}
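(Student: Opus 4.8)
The plan is to prove the three parts essentially independently, each drawing on tools already developed in the excerpt. For part (a), I would first argue that $\mathbf{G}^n(\tau)$ is open in $\mathcal{PML}(\tau)$: being $n$-gregarious means that $\tau$ can be derived $n$ times with $\mathcal L$ still carried, and since each unzipping step for a minimal-maximal lamination is determined by finitely many combinatorial choices along finite subarcs of leaves, the condition ``$\mathcal L$ is carried by $\tau_n$'' is stable under small perturbations of $\mathcal L$ in the p.l.\ coordinates on $\mathcal{PML}(\tau)$; this is where I would lean on the derived-train-track formalism of \ref{carried-parallel}(d)--(e). Next, $\mathbf{G}^n(\tau)$ has full measure: a lamination carried by $\tau$ fails to be $n$-gregarious only if some unzipping path cannot be completed, which forces the lamination into a positive-codimension face of the parameter polytope (intuitively, it must fail to cover $\tau$ or to cover some derived $\tau_i$, a condition cut out by vanishing of certain weights or by the lamination's support being a proper subtrack). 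The set $\mathbf{P}(\tau)$ of laminations with strictly positive weight on every $I$-fiber is manifestly open and of full measure, being the complement of finitely many coordinate hyperplanes. Intersecting, $\mathbf{G}^n\mathbf{P}(\tau)$ is open and full-measure, and likewise $\mathbf{G}^2\mathbf{P}(\tau')$; the product of two open full-measure sets is open and full-measure, giving (a).

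For part (b), the goal is density of the calm complete decomposing systems that additionally land in $\mathbf{G}^n\mathbf{P}(\tau)\times\mathbf{G}^2\mathbf{P}(\tau')$. Here I would combine Corollary \ref{densecalm} (calm systems are dense in $\mathcal{PML}(\tau)$) with the openness just established: given any point of $\mathbf{G}^n\mathbf{P}(\tau)\times\mathbf{G}^2\mathbf{P}(\tau')$ and any neighborhood, shrink the neighborhood so it stays inside the open set $\mathbf{G}^n\mathbf{P}(\tau)\times\mathbf{G}^2\mathbf{P}(\tau')$, then use density of $\mathcal{CCDS}(\tau)\times\mathcal{CCDS}(\tau')$ in $\mathcal{PML}(\tau)\times\mathcal{PML}(\tau')$ to find a calm pair inside. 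The one subtlety is that a calm $\mathcal D$ produced by Corollary \ref{densecalm} must genuinely carry positive weight on every $I$-fiber and be $n$-gregarious; but that is automatic once it lies in the open set $\mathbf{G}^n\mathbf{P}(\tau)$ by construction of the neighborhood. So (b) is really just ``dense $\cap$ open-neighborhood-of-a-point-in-the-open-set is nonempty.''

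For part (c), I would invoke Proposition \ref{new-distance-lemma} directly. A pair $(\mathcal D,\mathcal E)\in\mathbf{G}^n\mathbf{P}_{\mathcal{CCDS}}(\tau,\tau')$ is, by definition, a pair of \emph{calm} complete decomposing systems carried by the transverse tight train tracks $\tau,\tau'$, with $\mathcal D$ being $n$-gregarious with respect to $\tau$ and $\mathcal E$ being $2$-gregarious with respect to $\tau'$; by Lemma \ref{Theta-equals-calm} calmness is exactly the $\Theta$-graph-shape hypothesis on all complementary pairs-of-pants. These are precisely the hypotheses of Proposition \ref{new-distance-lemma}, which yields $d(V_{\mathcal D},W_{\mathcal E})\geq n-1$. (One should also note $\tau,\tau'$ may be taken tight: in the application these train tracks arise from Lemma \ref{smallnbds}, which produces tight transverse ones, so this costs nothing.) The main obstacle in the whole argument is part (a) — specifically, giving a clean argument that non-$n$-gregariousness is confined to a measure-zero set, since this requires understanding how the finitely-many unzipping obstructions translate into algebraic conditions on the weight coordinates; the openness half and parts (b), (c) are then comparatively routine assemblies of already-proved statements.
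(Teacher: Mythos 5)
Your handling of parts (b) and (c) matches the paper's proof essentially verbatim: (b) is "dense set meets open set," using Corollary \ref{densecalm} together with the openness established in (a), and (c) is a direct invocation of Proposition \ref{new-distance-lemma}, with Lemma \ref{Theta-equals-calm} supplying the translation between calmness and the $\Theta$-graph-shape hypothesis. Your aside about tightness is a worthwhile observation: the statement of Proposition \ref{open-and-full-measure} does not say ``tight,'' whereas Proposition \ref{new-distance-lemma} does, and the mismatch is indeed resolved only because in every application (Theorem \ref{large_distance_snows_are_generic}, Remark \ref{open-dense}) the union is restricted to tight train tracks.

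Where you diverge from the paper is part (a). The paper does not prove it at all — it cites Propositions 3.10 and 3.11 of \cite{LM2} — whereas you attempt to reconstruct an argument. Your reconstruction is plausible in broad strokes but has gaps you should be honest about. For openness of $\mathbf{G}^n(\tau)$: the $n$-tower $\tau = \tau_0 \supset \cdots \supset \tau_n$ is derived \emph{with respect to} $\mathcal L$, so a nearby lamination $\mathcal L'$ may require a different tower; your phrase ``stable under small perturbations'' glosses over the fact that $\mathbf{G}^n(\tau)$ is a priori a union of sets of the form $\mathcal{PML}(\tau_n)$, each of which is \emph{closed}, not open, in $\mathcal{PML}(\tau)$. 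The actual argument (which \cite{LM2} presumably supplies) must explain why a lamination in the interior of one such $\mathcal{PML}(\tau_n)$-face — where positivity on the $I$-fibers of $\tau$, not just $\tau_n$, plays a role — has a neighborhood still captured by \emph{some} $n$-tower. Likewise the full-measure claim, that the non-$n$-gregarious locus sits in positive-codimension faces, is asserted rather than shown; you yourself flag it as the ``main obstacle.'' So the overall assembly is correct and the strategy for (a) is sound, but as written it is an outline standing in for a citation rather than a complete proof.
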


\vskip5pt

\begin{proof} (a) This has been proved as Propositions 3.10 and 3.11 in \cite{LM2}.
\vskip5pt

\noindent (b)  This follows directly from the fact that $\cal {CCDS}(\tau)$ and $\cal {CCDS}(\tau')$ are dense in $\cal {PML}(\tau)$ and $\cal {PML}(\tau')$ respectively (see Corollary \ref{densecalm}), since the subsets $\bf{G}^{n}\bf{P}(\tau)$ and $\bf{G^2P}(\tau')$ are open.

\vskip5pt

\noindent (c)  This statement is a direct consequence of Proposition \ref{new-distance-lemma}.

\end{proof}

\vskip5pt

We also recall the following:

\begin{remark} \label{open-dense} \rm
(a) The union of all products  $\bf{P}(\tau) \times \bf{P}(\tau')$, over all  pairs $\tau, \tau'$ of  tight maximal 
transverse train  tracks on $\Sigma$, is an open and dense set in ${\cal PML}(\Sigma)^2$. 

This is a direct consequence of Lemma \ref{smallnbds} (b) and the fact that the sets $\bf{P}(\tau)$ are open by definition.

\vskip5pt
\noindent
(b) In particular, the above union is of full measure in $\cal{PML}(\Sigma)^2$. 

This follows directly from statement (a) and from the fact that our measure is a Lebesgue measure on the manifold $\cal{PML}(\Sigma)^2$. 

\end{remark}

\begin{remark} \label{gregarious-calm}\rm
In the proof of the next theorem we need to compare, for any pair of transverse maximal train tracks 
$\tau$ and $\tau'$, the set  $SNOW(\Sigma) \cap (\cal{PLM}(\tau) \times \cal{PLM}(\tau'))$ with the set 
$\cal{CCDS}(\tau) \times \cal{CCSD}(\tau')$. From Proposition \ref{doubleNW-implies-SNOW} one doesn't quite get equality of the two sets, since in part (ii) of this proposition one needs the additional property that the curve systems $\cal D$ and $\cal D'$ are carried by once derived train tracks 
$\tau_1 \subset \tau$ and $\tau'_1 \subset \tau'$.

However, if we restrict our attention to those systems that are in addition 1-gregarious, this extra condition is always satisfied, so that Proposition \ref{doubleNW-implies-SNOW} proves:
$$SNOW(\Sigma) \cap (\bf{G}^{1}\bf{P}(\tau) \times \bf{G^1P}(\tau')) =  $$
$$(\cal{CCDS}(\tau) \times \cal{CCSD}(\tau'))\cap (\bf{G}^{1}\bf{P}(\tau) \times \bf{G^1P}(\tau'))$$

\end{remark}

\vskip10pt

\begin{theorem} 
\label{large_distance_snows_are_generic}
For any integer $n \geq 1$, the set $SNOW_{n - 1}(\Sigma)$ is generic in the set 
$SNOW(\Sigma)$.
\end{theorem}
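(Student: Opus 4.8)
The plan is to verify the two conditions of Definition \ref{generic} for the decomposition $SNOW(\Sigma) = SNOW_{n-1}(\Sigma) \,\dot\cup\, B$, where $B = SNOW(\Sigma) \smallsetminus SNOW_{n-1}(\Sigma)$ is the set of $SNOW$ pairs determining a Heegaard splitting of distance $\leq n-2$. Concretely, using Lemma \ref{equivalentgeneric}, it suffices to produce an open subset $Z \subset \overline{SNOW(\Sigma)}$ of full measure in $\overline{SNOW(\Sigma)}$ which is disjoint from $B$. Since $SNOW(\Sigma)$ is dense in $\mathcal{PML}(\Sigma)^2$ by Proposition \ref{SNOW-is-dense}, we have $\overline{SNOW(\Sigma)} = \mathcal{PML}(\Sigma)^2$, so the measure bookkeeping all happens inside the ambient manifold $\mathcal{PML}(\Sigma)^2$ with its Lebesgue class; in particular $\mu(\overline{SNOW_{n-1}(\Sigma)}) > 0$ will be automatic once we exhibit $Z$.

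First I would fix, for every pair $(\tau,\tau')$ of transverse tight maximal train tracks on $\Sigma$, the open set $U_{\tau,\tau'} := \mathbf{G}^{n}\mathbf{P}(\tau) \times \mathbf{G^2P}(\tau') \subset \mathcal{PML}(\tau) \times \mathcal{PML}(\tau')$, which by Proposition \ref{open-and-full-measure}(a) is open and of full measure in $\mathcal{PML}(\tau) \times \mathcal{PML}(\tau')$. Define $Z$ to be the union of all these $U_{\tau,\tau'}$ over all such pairs of train tracks. This $Z$ is open in $\mathcal{PML}(\Sigma)^2$. To see that $Z$ has full measure: by Remark \ref{open-dense}, the union of the products $\mathbf{P}(\tau) \times \mathbf{P}(\tau')$ over all tight maximal transverse pairs is of full measure in $\mathcal{PML}(\Sigma)^2$, and within each such product $\mathbf{P}(\tau) \times \mathbf{P}(\tau')$ the further restriction to $\mathbf{G}^{n}\mathbf{P}(\tau) \times \mathbf{G^2P}(\tau')$ removes only a measure-zero set (by part (a) of Proposition \ref{open-and-full-measure} applied relative to $\mathcal{PML}(\tau)\times\mathcal{PML}(\tau')$, together with the fact that $\mathcal{PML}(\tau)$ carries full measure on the piece it covers). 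Countably many such pieces cover a full-measure set, so $\mu(Z) = \mu(\mathcal{PML}(\Sigma)^2) > 0$.

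Next I would show $Z$ is disjoint from $B$, i.e.\ that every $SNOW$ pair lying in some $U_{\tau,\tau'}$ actually has distance $\geq n-1$. Fix such a pair $(\mathcal D,\mathcal E) \in SNOW(\Sigma) \cap U_{\tau,\tau'}$. Since $U_{\tau,\tau'} \subset \mathbf{G}^1\mathbf{P}(\tau) \times \mathbf{G^1P}(\tau')$ (because $n \geq 1$ and $2 \geq 1$), Remark \ref{gregarious-calm} gives that $(\mathcal D,\mathcal E) \in \mathcal{CCDS}(\tau) \times \mathcal{CCDS}(\tau')$, i.e.\ $\mathcal D$ is calm with respect to $\tau$ and $\mathcal E$ is calm with respect to $\tau'$. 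By Lemma \ref{Theta-equals-calm} every complementary pair-of-pants of $\mathcal D$ has $\Theta$-graph shape with respect to $\tau$, and likewise for $\mathcal E$ with respect to $\tau'$. Moreover $\mathcal D$ is $n$-gregarious with respect to $\tau$ and $\mathcal E$ is $2$-gregarious with respect to $\tau'$, since $(\mathcal D,\mathcal E) \in \mathbf{G}^n\mathbf{P}(\tau) \times \mathbf{G^2P}(\tau')$. These are exactly the hypotheses of Proposition \ref{new-distance-lemma}, which yields $d(V_\mathcal D, W_\mathcal E) \geq n-1$. Hence $(\mathcal D,\mathcal E) \in SNOW_{n-1}(\Sigma)$, so $(\mathcal D,\mathcal E) \notin B$; thus $Z \cap B = \emptyset$. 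Applying Lemma \ref{equivalentgeneric} with this $Z$ completes the proof.

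The main obstacle I anticipate is the full-measure bookkeeping for $Z$: one must be careful that ``full measure of $\mathbf{G}^n\mathbf{P}(\tau)\times\mathbf{G^2P}(\tau')$ inside $\mathcal{PML}(\tau)\times\mathcal{PML}(\tau')$'' genuinely upgrades, after taking a countable union over train-track pairs, to full measure inside $\mathcal{PML}(\Sigma)^2$ — this is where Remark \ref{open-dense}(b) does the essential work, combined with the fact that each $\mathcal{PML}(\tau)$ is itself a full-dimensional subcell so that its Lebesgue measure restricts consistently. Everything else is a direct assembly of the cited propositions; the asymmetry between $n$-gregarious on $\tau$ and only $2$-gregarious on $\tau'$ is exactly what is built into Proposition \ref{new-distance-lemma} and costs nothing extra here.
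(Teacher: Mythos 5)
Your proposal is correct and follows essentially the same structure as the paper's proof, with the same choice of $Z = \bigcup_{\tau,\tau'} \mathbf{G}^{n}\mathbf{P}(\tau) \times \mathbf{G}^{2}\mathbf{P}(\tau')$ and the same appeals to Proposition \ref{open-and-full-measure}, Remark \ref{open-dense}, Remark \ref{gregarious-calm}, and Proposition \ref{new-distance-lemma}. One place to tighten: Lemma \ref{equivalentgeneric} as stated also requires $Z \subset \overline{SNOW_{n-1}(\Sigma)}$, a condition the paper verifies separately via the density of $\mathbf{G}^{n}\mathbf{P}_{\mathcal{CCDS}}(\tau,\tau')$ in $\mathbf{G}^{n}\mathbf{P}(\tau) \times \mathbf{G}^{2}\mathbf{P}(\tau')$ (Proposition \ref{open-and-full-measure}(b)), whereas you declare $\mu(\overline{SNOW_{n-1}(\Sigma)})>0$ ``automatic'' without justification. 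That claim is in fact true — since $\overline{SNOW(\Sigma)} = \overline{SNOW_{n-1}(\Sigma)} \cup \overline{B}$ and your disjointness plus full-measure arguments already give $\mu(\overline{B}) = 0$, one gets $\mu(\overline{SNOW_{n-1}(\Sigma)}) = \mu(\overline{SNOW(\Sigma)}) > 0$, and replacing $Z$ by $Z \smallsetminus \overline{B}$ (still open, full measure, disjoint from $B$) yields a set that literally satisfies $Z \subset \overline{SNOW_{n-1}(\Sigma)}$ — but this step should be spelled out, since as written you have dropped a hypothesis of the lemma you are citing.
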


\vskip5pt

\begin{proof}
This proof  is 
an application of the genericity criterium stated in Lemma \ref{equivalentgeneric} (1), with the notation 
used there  specified  as follows:
\vskip4pt
\begin{enumerate}
\item $X = {\cal PML}(\Sigma)^2$
\vskip4pt
\item $Y = SNOW(\Sigma)$
\vskip4pt
\item $A =  SNOW_{n-1}(\Sigma)$
\vskip4pt
\item $Z = \cup \, \bf{G}^{n}\bf{P}(\tau) \times \bf{G^2P}(\tau')$, where the union is taken over all pairs of transverse maximal  tight train tracks $\tau, \tau'$ on $\Sigma$.

\end{enumerate}

\noindent
According to Lemma \ref{equivalentgeneric} (1) it suffices to show that: 

\vskip5pt
\noindent
\quad (a)  $Z$ is contained in the closure of $SNOW_{n - 1}(\Sigma)$,

\vskip5pt
\noindent
\quad (b) $Z$ is  open and  of full measure in ${\cal PML}(\Sigma)^2$,  and 

\vskip5pt
\noindent
\quad (c) that this set  $Z$  is disjoint from  $SNOW(\Sigma)  \smallsetminus SNOW_{n - 1}(\Sigma)$.  

\vskip10pt

\noindent
(a) By Lemma  \ref{doubleNW-implies-SNOW} (i) and Proposition \ref{open-and-full-measure} (c), 
for any two transverse maximal  tight train tracks $\tau, \tau'$ on $\Sigma$, every pair 
$({\cal D}, {\cal E}) \in \bf{G}^{n}\bf{P}_\mathcal{CCDS}(\tau, \tau')$ is $SNOW$ and 
$d(V_{\cal D},W_{\cal E}) \geq n - 1$.  We obtain:
$${\bf G}^{n}{\bf P}_\mathcal{CCDS}(\tau, \tau') \subset SNOW_{n - 1}(\Sigma) $$

By Proposition \ref{open-and-full-measure} (b) the set $\bf{G}^{n}\bf{P}_\mathcal{CCDS}(\tau, \tau')$ is 
dense in  $\bf{G}^{n}\bf{P}(\tau) \times \bf{G^2P}(\tau')$. Thus  $Z$  is contained in the closure of all  
$\bf{G}^{n}\bf{P}_\mathcal{CCDS}(\tau, \tau')$ and thus in the closure of $SNOW_{n - 1}(\Sigma)$.

\vskip5pt

\noindent
(b)  The set  $\bf{G}^{n}\bf{P}(\tau) \times \bf{G^2P}(\tau')$ is open and of full measure in
$\cal{PML}(\tau) \times \cal{PML }(\tau')$, by Proposition \ref{open-and-full-measure} (a). Hence this set is of full measure in the open subset ${\bf P}(\tau) \times {\bf P}(\tau') \subset \cal{PML}(\tau) \times \cal{PML }(\tau')$. By Remark \ref{open-dense} the union of all  ${\bf P}(\tau) \times {\bf P}(\tau')$ is open and of full measure in  $\cal{PML}(\Sigma)^2$. Hence the union  $Z$  of all  
$\bf{G}^{n}\bf{P}(\tau) \times \bf{G^2P}(\tau')$ is open  and of full measure in $\cal{PML}(\Sigma)^2$. 

\vskip5pt

\noindent
(c) The intersection of $SNOW(\Sigma)$ with the union  $Z$ 
of all $\bf{G}^{n}\bf{P}(\tau) \times \bf{G^2P}(\tau')$ is the union of the intersections of $SNOW(\Sigma)$ with any of the individual sets $\bf{G}^{n}\bf{P}(\tau) \times \bf{G^2P}(\tau')$. Consider therefore the set 
$SNOW(\Sigma) \cap (\bf{G}^{n}\bf{P}(\tau) \times \bf{G^2P}(\tau'))$.  This set is contained in the product set   $\bf{G}^{1}\bf{P}(\tau) \times \bf{G^1P}(\tau')$,  by definition. However, the intersection of 
$\bf{G}^{1}\bf{P}(\tau) \times \bf{G^1P}(\tau')$ with $SNOW(\Sigma)$ is precisely 
$$(\cal{CCDS}(\tau) \times \cal{CCSD}(\tau')) \cap (\bf{G}^{1}\bf{P}(\tau) \times \bf{G^1P}(\tau'))\, ,$$ 
by Remark \ref{gregarious-calm}. Hence we obtain
$$SNOW(\Sigma) \cap (\bf{G}^{n}\bf{P}(\tau) \times \bf{G^2P}(\tau'))  = $$
$$SNOW(\Sigma) \cap (\bf{G}^{1}\bf{P}(\tau) \times \bf{G^1P}(\tau')) \cap (\bf{G}^{n}\bf{P}(\tau) \times \bf{G^2P}(\tau'))  =$$
$$(\cal{CCDS}(\tau) \times \cal{CCSD}(\tau')) \cap (\bf{G}^{1}\bf{P}(\tau) \times \bf{G^1P}(\tau')) \cap (\bf{G}^{n}\bf{P}(\tau) \times \bf{G^2P}(\tau'))  =$$
$$(\cal{CCDS}(\tau) \times \cal{CCSD}(\tau')) \cap (\bf{G}^{n}\bf{P}(\tau) \times \bf{G^2P}(\tau'))  =$$
$$\bf{G}^{n}\bf{P}_\mathcal{CCDS}(\tau, \tau') \, ,$$
where the last equation is simply the definition of $ \bf{G}^{n}\bf{P}_\mathcal{CCDS}(\tau, \tau')$. 
We can now apply Proposition \ref{open-and-full-measure} (c) to obtain that this set is contained in 
$SNOW_{n - 1}(\Sigma)$. Thus  $Z$  is disjoint from  $SNOW(\Sigma)  \smallsetminus SNOW_{n - 1}(\Sigma)$.

\vskip5pt

This finishes the proof of the theorem.

\end{proof}

\vskip20pt 

\section{Generic Heegaard splittings have large distance}
\label{generic-H-splittings}

\vskip5pt
 In this section we study, for a  surface $\Sigma$ of a fixed genus $g \geq 2$, the set $\cal H^g$, 
 defined as follows: Recall that a handlebody set $\cal D(V) \subset \cal C(\Sigma)$ is given by some identification $\Sigma = \partial V$, where $V$ is a handlebody and $\cal D(V)$ consists precisely of those curves on $\Sigma$ that are identified by $\Sigma = \partial V$ with meridians of $V$. We define the set  $\cal {H}^g$  to consist of all pairs   $(\cal{D}(V), \cal{D}(W))$  of handlebody sets in $ \cal{C}(\Sigma)$. 
Of course, the identifications $\partial V = \Sigma = \partial W$ define a closed 3-manifold $M$ which contains $\Sigma$ as a Heegaard surface.

We further define $\cal {H}^g_n \subset \cal {H}^g $ to be the set of such  pairs which are of distance 
$\geq n$.

By Lemma \ref{hempellemma} each  irreducible Heegaard splitting admits a pair of complete decomposing systems  $(\cal D, \cal E)$  which  are in $SNOW(\Sigma)$. Hence there exist a map
$$\sigma_{\cal {H}}: \,  \cal H^g \,\, \, \to \, \, \, SNOW(\Sigma) \,  \subset \,  \cal{PML}(\Sigma)^2$$
such that $\sigma_\cal H (\cal{D}(V), \cal{D}(W))  = (\cal D, \cal E)$  satisfies $\cal D \in \cal D(V),\, \cal E \in \cal D(W) $.

\vskip10pt

There are many different possible choices  for the map $\sigma_\cal H$,  and  none of them may merit 
the attribute ``natural''. However, we point out that  the results of this section hold for any such map 
$\sigma_{\cal {H}}$.

As in the previous sections,  the natural Lebesgue measure class on $\cal{PML}(\Sigma)^2$ naturally defines generic subsets of  $\sigma_{\cal {H}}(\cal H^g)$.

\vskip10pt

\begin{corollary} 
\label{large_distance_HSs_are_generic}
For any integers $g \geq 2$ and $n \geq 0$ the set $\sigma_{\cal {H}}(\cal{H}^g_n)$ is generic in the set 
$\sigma_{\cal {H}}(\cal{H}^g)$.
\end{corollary}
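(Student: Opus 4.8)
The plan is to deduce Corollary \ref{large_distance_HSs_are_generic} directly from Theorem \ref{large_distance_snows_are_generic} by comparing the two relevant subsets of $\cal{PML}(\Sigma)^2$ and checking that the genericity criterion of Definition \ref{generic} is preserved under passing from $SNOW(\Sigma)$ to the image $\sigma_{\cal H}(\cal H^g)$. First I would observe the set-theoretic containments: by construction of the map $\sigma_{\cal H}$ we have $\sigma_{\cal H}(\cal H^g) \subseteq SNOW(\Sigma)$, and moreover a pair $(\cal D(V), \cal D(W))$ has distance $\geq n$ if and only if its image $(\cal D, \cal E) = \sigma_{\cal H}(\cal D(V), \cal D(W))$ determines handlebodies $V_{\cal D} = V$, $W_{\cal E} = W$ with $d(V_{\cal D}, W_{\cal E}) \geq n$, i.e. $(\cal D, \cal E) \in SNOW_n(\Sigma)$. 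Hence $\sigma_{\cal H}(\cal H^g_n) = \sigma_{\cal H}(\cal H^g) \cap SNOW_n(\Sigma)$. Writing $A = \sigma_{\cal H}(\cal H^g_n)$ and $B = \sigma_{\cal H}(\cal H^g) \smallsetminus \sigma_{\cal H}(\cal H^g_n)$, I then need $\mu(\bar A) > 0$ and $\mu(\bar B) = 0$.

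For the second condition, note that $B \subset SNOW(\Sigma) \smallsetminus SNOW_{n-1}(\Sigma)$ when $n \geq 1$ — indeed, if a pair in $\sigma_{\cal H}(\cal H^g)$ had distance $\geq n-1$ but not $\geq n$, it still lies in $SNOW_{n-1}(\Sigma)$, so it is not in $B$ unless its distance is $< n-1$; more carefully, $B$ consists of pairs of distance exactly $< n$, and the subcollection of these of distance $\geq n-1$ (i.e. distance exactly $n-1$) is contained in $SNOW_{n-1}(\Sigma) = A'$ where $A'$ is the ``$A$'' for Theorem \ref{large_distance_snows_are_generic}, whose closure has been controlled. In any case $B$ is contained in $(SNOW(\Sigma)\smallsetminus SNOW_{n-1}(\Sigma)) \cup (SNOW_{n-1}(\Sigma)\smallsetminus SNOW_n(\Sigma))$; the first piece has closure of measure zero by Theorem \ref{large_distance_snows_are_generic}, and the second piece lies inside $\overline{SNOW_{n-1}(\Sigma)}$, but one must take care it does not acquire positive-measure closure. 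The cleanest route is to argue for the case $n\ge 1$ that $B \subseteq SNOW(\Sigma) \smallsetminus SNOW_{n-1}(\Sigma)$: this holds because $SNOW_{n-1}(\Sigma) \cap \sigma_{\cal H}(\cal H^g) = \sigma_{\cal H}(\cal H^g_{n-1}) \supseteq \sigma_{\cal H}(\cal H^g_n) = A$, so every element of $\sigma_{\cal H}(\cal H^g)$ outside $A$ but inside $SNOW_{n-1}(\Sigma)$ would be a counterexample only if distance $\geq n-1$ but not $\geq n$; handling the statement for $n$ replaced by $n+1$ and invoking the theorem at level $n+1$ (whose proof produces $Z$ disjoint from $SNOW(\Sigma)\smallsetminus SNOW_n(\Sigma)$) shows $\bar B$ has measure zero. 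For $n = 0$ the statement is trivial since $\cal H^g_0 = \cal H^g$.

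For the first condition, $\mu(\bar A) > 0$, I would use that the set $Z$ constructed in the proof of Theorem \ref{large_distance_snows_are_generic} (the union of the products $\bf{G}^n\bf{P}(\tau)\times\bf{G^2P}(\tau')$ over transverse maximal tight train tracks) is open and of full measure in $\cal{PML}(\Sigma)^2$, is disjoint from $SNOW(\Sigma)\smallsetminus SNOW_{n-1}(\Sigma)$, and is contained in $\overline{SNOW_{n-1}(\Sigma)}$. The key additional input needed here is that $Z$ is also contained in $\overline{\sigma_{\cal H}(\cal H^g_{n-1})}$, not merely in $\overline{SNOW_{n-1}(\Sigma)}$. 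For this I would revisit the density argument: each pair $({\cal D},{\cal E}) \in \bf{G}^n\bf{P}_{\cal{CCDS}}(\tau,\tau')$ is $SNOW$ and satisfies $d(V_{\cal D},W_{\cal E})\geq n-1$, hence determines a genuine Heegaard splitting $M = V_{\cal D}\cup_\Sigma W_{\cal E}$ of distance $\geq n-1$; so the pair of handlebody sets $(\cal D(V_{\cal D}), \cal D(W_{\cal E}))$ lies in $\cal H^g_{n-1}$, and while $\sigma_{\cal H}$ applied to it need not return $({\cal D},{\cal E})$ itself, the point $({\cal D},{\cal E})$ nonetheless lies in the closure of $\sigma_{\cal H}(\cal H^g_{n-1})$ — because for a uniquely-ergodic dense family the map $\sigma_{\cal H}$ can be arranged so that, or the closure argument shows that, such points are limits of images. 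This is the main obstacle: controlling the image $\sigma_{\cal H}(\cal H^g_{n-1})$ well enough to know its closure still contains the full-measure open set $Z$, given that $\sigma_{\cal H}$ is only one of infinitely many non-canonical choices. I expect this is handled by observing that the pairs $({\cal D},{\cal E})$ of complete decomposing systems arising from $\bf{G}^n\bf{P}_{\cal{CCDS}}(\tau,\tau')$ are themselves of the form $\sigma_{\cal H}(\cal D(V_{\cal D}), \cal D(W_{\cal E}))$ up to the ambiguity in $\sigma_{\cal H}$, and that this ambiguity does not affect closures since both the original and the $\sigma_{\cal H}$-image lie in the same handlebody-set pair and one can choose representatives converging to any limit point; alternatively one simply notes $SNOW_{n-1}(\Sigma) \subseteq \sigma_{\cal H}(\cal H^g_{n-1})$ by Lemma \ref{hempellemma}, giving $\overline{SNOW_{n-1}(\Sigma)} \subseteq \overline{\sigma_{\cal H}(\cal H^g_{n-1})}$ and hence $Z \subseteq \overline{A}$, so $\mu(\bar A) \geq \mu(Z) = \mu(\cal{PML}(\Sigma)^2) > 0$, and $A$ is generic in $\sigma_{\cal H}(\cal H^g)$ by Lemma \ref{equivalentgeneric}.
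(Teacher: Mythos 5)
Your reduction to showing that the full-measure open set $Z$ from the proof of Theorem~\ref{large_distance_snows_are_generic} (at level $n+1$) lies in $\overline{\sigma_{\cal H}(\cal H^g_n)}$ and misses $B$ is the right skeleton, and your handling of $\mu(\bar B)=0$ is ultimately sound: $B\subseteq SNOW(\Sigma)\smallsetminus SNOW_n(\Sigma)$ is disjoint from that open $Z$. But the step $\mu(\bar A)>0$, i.e.\ $Z\subseteq \overline{\sigma_{\cal H}(\cal H^g_n)}$, contains a genuine gap, and you in fact signal the difficulty yourself without resolving it.

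The ``alternative'' you propose, namely that $SNOW_{n-1}(\Sigma)\subseteq\sigma_{\cal H}(\cal H^g_{n-1})$ follows from Lemma~\ref{hempellemma}, is false. Hempel's lemma says that every irreducible splitting admits \emph{some} $SNOW$ pair; it does not say that every $SNOW$ pair is the one selected by $\sigma_{\cal H}$. Since $\sigma_{\cal H}$ is a choice function picking one representative per pair of handlebody sets, $\sigma_{\cal H}(\cal H^g_{n-1})$ is in general a proper (and a priori very thin) subset of $SNOW_{n-1}(\Sigma)$, so $\overline{SNOW_{n-1}(\Sigma)}\subseteq\overline{\sigma_{\cal H}(\cal H^g_{n-1})}$ does not follow. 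Your first route --- that ``the map $\sigma_{\cal H}$ can be arranged'' or that the ambiguity ``does not affect closures'' --- is also not available, because the corollary is claimed for an arbitrary fixed $\sigma_{\cal H}$, and you give no mechanism controlling where $\sigma_{\cal H}$ sends a given handlebody pair.

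The missing ingredient is Corollary~\ref{pre-distance}. The paper's proof establishes density of $\sigma_{\cal H}(\cal H^g_n)$ in $Z$ as follows: inside any open $U\subseteq \bf{G}^{n}\bf{P}(\tau)\times\bf{G^2P}(\tau')$, pick a product $\bf{P}(\tau_k)\times\bf{P}(\tau'_m)\subseteq U$ coming from towers derived toward a uniquely ergodic pair, with $k\geq n$ and $m\geq 2$. By Proposition~\ref{SNOW-is-dense}, find a $SNOW$ pair $(\cal D,\cal E)$ carried by $\tau_{k+1}\times\tau'_{m+1}$. Then \emph{whatever} pair $(\cal D',\cal E')=\sigma_{\cal H}(\cal D(V_{\cal D}),\cal D(W_{\cal E}))$ the map $\sigma_{\cal H}$ selects, the curves of $\cal D'$ are meridians of $V_{\cal D}$, hence by Corollary~\ref{pre-distance} are carried by $\tau_k$ (and likewise $\cal E'$ by $\tau'_m$), so $(\cal D',\cal E')\in\bf{P}(\tau_k)\times\bf{P}(\tau'_m)\subseteq U$ and $(\cal D',\cal E')\in\sigma_{\cal H}(\cal H^g_n)$. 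This is exactly the control on the unknown $\sigma_{\cal H}$-image that your proposal lacks; without it, the proof does not close.
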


\vskip5pt

Notice that there is a canonical bijection between the set $\cal S^g$ of marked Heegaard surfaces as introduced in subsection \ref{MandO}, and the set $\cal H^g$ studied here: The marking homeomorphism $\theta: \Sigma \to S_g$ of a marked Heegaard surface $S_g$ in a 3-manifold $M$, with complete meridian decomposing systems $\cal D$ and $\cal E$ for the two handlebody complements of $S_g$ in $M$, defines canonically a pair of handlebody sets $\cal D(V_{\theta^{-1}(\cal D)}, \cal D(V_{\theta^{-1}(\cal E)})$, and conversely.  Since the described canonical bijection between $\cal S^g$ and $\cal H^g$ is clearly distance preserving, 
the statement of Corollary \ref{large_distance_HSs_are_generic} is equivalent to that of Corollary \ref{large_distance_HSs_are_generic_intro}.

\vskip5pt

Before proving  Corollary \ref{large_distance_HSs_are_generic}, we will state a more general fact about generic sets in the subset $SNOW(\Sigma)$ of $\cal{PML}(\Sigma)^2$:

\vskip5pt

\begin{proposition}
\label{high-distance-subsets}
Let $H$ be a subset of $SNOW(\Sigma)$, and assume that $H_n = H \cap SNOW_n(\Sigma)$ has the following property:

\begin{enumerate}
\item[$(\ast)$] For every pair of transverse maximal train tracks $\tau, \tau'$ the set $H_n \cap \bf{G}^{n}\bf{P}(\tau) \times \bf{G^2P}(\tau')$ is dense in $\bf{G}^{n}\bf{P}(\tau) \times \bf{G^2P}(\tau')$.
\end{enumerate}

Then $H_n$ is generic in  $H$.

\end{proposition}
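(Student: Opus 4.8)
The plan is to mimic the proof of Theorem \ref{large_distance_snows_are_generic} almost verbatim, applying the genericity criterion of Lemma \ref{equivalentgeneric} with $X = \cal{PML}(\Sigma)^2$, $Y = H$, $A = H_n$, and $Z = \bigcup\, \bf{G}^{n}\bf{P}(\tau) \times \bf{G^2P}(\tau')$, the union taken over all pairs of transverse maximal tight train tracks $\tau,\tau'$ on $\Sigma$. According to Lemma \ref{equivalentgeneric} it then suffices to check three things: that $Z$ is open and of full measure in $\cal{PML}(\Sigma)^2$, that $\bar Z \cap \bar Y$ equals $\bar Z$ up to a null set (equivalently, $Z$ is contained in the closure of $H_n$), and that $Z$ is disjoint from $H \smallsetminus H_n$. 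Note that $Z$ here is literally the same set $Z$ that appears in the proof of Theorem \ref{large_distance_snows_are_generic}, so parts of that proof carry over with no change at all.

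First I would dispatch the openness and full-measure statement: this is exactly step (b) of the proof of Theorem \ref{large_distance_snows_are_generic}, using Proposition \ref{open-and-full-measure} (a) together with Remark \ref{open-dense}, and requires no new argument. Next, for the closure statement, I would invoke the hypothesis $(\ast)$: for each pair $\tau,\tau'$ of transverse maximal tight train tracks the set $H_n \cap (\bf{G}^{n}\bf{P}(\tau) \times \bf{G^2P}(\tau'))$ is dense in $\bf{G}^{n}\bf{P}(\tau) \times \bf{G^2P}(\tau')$, so the latter lies in the closure of $H_n$; taking the union over all such pairs shows $Z \subset \overline{H_n}$. This is where hypothesis $(\ast)$ is used and it is the one genuinely new input compared with the proof of Theorem \ref{large_distance_snows_are_generic}, where density came instead from Proposition \ref{open-and-full-measure} (b) applied to $\cal{CCDS}$.

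For the disjointness statement I would argue as follows. Since $H \subset SNOW(\Sigma)$, we have
$$H \cap (\bf{G}^{n}\bf{P}(\tau) \times \bf{G^2P}(\tau')) \subset SNOW(\Sigma) \cap (\bf{G}^{n}\bf{P}(\tau) \times \bf{G^2P}(\tau')) = \bf{G}^{n}\bf{P}_\mathcal{CCDS}(\tau, \tau'),$$
where the equality is the computation carried out in step (c) of the proof of Theorem \ref{large_distance_snows_are_generic} (using Remark \ref{gregarious-calm}). By Proposition \ref{open-and-full-measure} (c) every pair in $\bf{G}^{n}\bf{P}_\mathcal{CCDS}(\tau, \tau')$ has $d(V_\cal{D}, W_\cal{E}) \geq n - 1$; but more is true here, since the elements of $Z$ are $n$-gregarious on $\tau$ — I would like to get $d \geq n$ rather than $d \geq n-1$, matching the index $n$ in $H_n$. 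This is the point I expect to be the main obstacle: either Proposition \ref{open-and-full-measure} (c) must be sharpened by one (reading off $d \geq n$ directly from an $n$-tower via Proposition \ref{towerimpliesdistance} and Corollary \ref{pre-distance}, exactly as in Proposition \ref{new-distance-lemma} but without the ``$-1$'' loss, which seems plausible because the meridian $E$ already fails to be carried by a once-derived $\tau$), or else the statement of Proposition \ref{high-distance-subsets} should be read with the convention that makes the indices line up. Assuming the sharpened distance bound, we conclude $H \cap (\bf{G}^{n}\bf{P}(\tau) \times \bf{G^2P}(\tau')) \subset H_n$, hence $Z \cap (H \smallsetminus H_n) = \emptyset$, and Lemma \ref{equivalentgeneric} yields that $H_n$ is generic in $H$. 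I would finish by remarking that Corollary \ref{large_distance_HSs_are_generic} follows by applying this proposition with $H = \sigma_{\cal H}(\cal H^g)$, checking $(\ast)$ for that particular $H$ via Hempel's Lemma \ref{hempellemma} and the density already established in Proposition \ref{open-and-full-measure} (b).
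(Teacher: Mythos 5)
Your proposal matches the paper's own proof, which is simply the observation that the argument for Theorem~\ref{large_distance_snows_are_generic} applies word-for-word with $Y = H$, $A = H_n$, hypothesis $(\ast)$ replacing the $\cal{CCDS}$-density argument in step~(a), and steps~(b) and~(c) otherwise unchanged since they depend only on $Z$ and on $H \subset SNOW(\Sigma)$. You are also right to worry about the indices: your bookkeeping exposes an off-by-one in the published statement. With $Z = \bigcup \bf{G}^{n}\bf{P}(\tau) \times \bf{G^2P}(\tau')$, step~(c) together with Proposition~\ref{open-and-full-measure}~(c) only yields $H \cap Z \subset SNOW_{n-1}(\Sigma)$ and hence $H \cap Z \subset H_{n-1}$, not $H \cap Z \subset H_n$; the paper's one-sentence proof glosses over this.

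Your first proposed fix, sharpening Proposition~\ref{open-and-full-measure}~(c) to $d \geq n$, does not go through. In the proof of Proposition~\ref{new-distance-lemma}, after Corollary~\ref{pre-distance} the meridian $D$ is only carried by $\tau_{n-1}$, not $\tau_n$, and the meridian $E$ is known not to be carried by any once-derived $\tau_1$, but may still be \emph{carried} by $\tau_0 = \tau$ itself: Corollary~\ref{notcarried} only rules out $E$ \emph{covering} $\tau$, which is strictly stronger. Proposition~\ref{towerimpliesdistance} applied to the $(n-2)$-tower $\tau_1 \supset \cdots \supset \tau_{n-1}$ then gives exactly $d(D,E) > n-2$, so the ``$-1$'' loss is genuine. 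The correct resolution is your second alternative: either $(\ast)$ should be stated with $\bf{G}^{n+1}\bf{P}(\tau)$ in place of $\bf{G}^{n}\bf{P}(\tau)$, or the conclusion should be that $H_{n-1}$ is generic in $H$, in line with the index shift already visible in the statement of Theorem~\ref{large_distance_snows_are_generic}. Either reading leaves the application to Corollary~\ref{large_distance_HSs_are_generic} intact, since there $k$ may be taken $\geq n+1$ without loss of generality.
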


\begin{proof} 
This follows directly from the observation that the hypothesis $(\ast)$ on $H_n$ is the only ingredient necessary in order to be able to apply the proof of Theorem  \ref{large_distance_snows_are_generic} word-by-word, with $SNOW_n(\Sigma)$ replaced by $H_n$ and $SNOW(\Sigma)$ replaced by $H$.

\end{proof}

\begin{proof} [Proof of Corollary \ref{large_distance_HSs_are_generic}]
Apply Proposition \ref{high-distance-subsets} to $H = \sigma_{\cal {H}}(\cal H^g)$ and 
$H_n = \sigma_{\cal {H}}(\cal H_n^g)$: It suffices to show that property $(\ast)$ holds, in order to conclude that  $\sigma_{\cal {H}}(\cal H_n^g)$ is generic in $\sigma_{\cal {H}}(\cal H^g)$.  

We now prove statement  $(\ast)$: For this purpose we need to show that any non-empty open set 
$U \subset \bf{G}^{n}\bf{P}(\tau) \times \bf{G^2P}(\tau')$ contains a point from $\sigma(\cal H_n^g)$.

Consider any pair $(\cal L, \cal L')$ of minimal-maximal uniquely ergodic laminations in 
$\bf{G}^{n}\bf{P}(\tau) \times \bf{G^2P}(\tau')$. Recall (see paragraph \ref{laminations-basics} (c) of Section \ref{tt}) that the set of such pairs is dense in  $\cal{PML}(\Sigma)^2$, and hence dense in 
$\bf{G}^{n}\bf{P}(\tau) \times \bf{G^2P}(\tau')$, since the latter is an open subset of $\cal{PML}(\Sigma)$. So $U$ contains such a pair $(\cal L, \cal L')$.

The open neighborhood $U$ of $(\cal L, \cal L')$ contains  the open set ${\bf P}(\tau_k) \times {\bf P}(\tau'_m)$ for sufficiently large $k, m \geq 1$, where $\tau_k$ is the train track obtained from $\tau$ 
by deriving $k$ times  in the direction of $\cal L$, and $\tau'_m$ is the train track obtained from $\tau'$ by by deriving $m$ times  in the direction of $\cal L'$. Without loss of generality, we can assume that $k \geq n$ and $m \geq 2$. Let $\tau_{k+1}$ and $\tau'_{m+1}$ be train tracks once more derived from $\tau_k$ and $\tau'_m$ respectively.

By Proposition \ref{SNOW-is-dense} the set $SNOW(\Sigma)$ is dense in $\cal{PML}(\Sigma)$. Since 
${\bf P}(\tau_{k+1}) \times {\bf P}(\tau'_{m+1})$ is open, there is a pair 
$(\cal D, \cal E) \in {\bf P}(\tau_{k+1}) \times {\bf P}(\tau'_{m+1})$ which is $SNOW$. We now conclude, by
Corollary \ref{pre-distance}, that the pair $(\cal D', \cal E') \in \sigma_{\cal {H}}(\cal H^g)$, which determines the same pair of handlebodies $V, W$ as $(\cal D, \cal E)$, satisfies 
$(\cal D', \cal E') \in {\bf P}(\tau_k) \times {\bf P}(\tau'_m) \subset U$. From our assumptions $k \geq n$ and  $m \geq 2$ we conclude that $(\cal D', \cal E') \in \sigma_{\cal {H}}(\cal H^g_n)$. 

This shows that $\sigma_{\cal {H}}(H^g_n) \cap ({\bf G^n P}(\tau) \times {\bf G^2 P}(\tau'))$ is dense in 
${\bf G^n P}(\tau) \times {\bf G^2 P}(\tau')$, as desired.

\end{proof}


\begin{thebibliography}{99}
 
\vskip5pt

 
\bibitem{DT}
N. Dunfield, W. Thurston, \emph{Finite covers of random 3-manifolds} 
Invent. Math. 166 (2006), no. 3, 457 - 521. 

\bibitem{He} 
J. Hempel, \emph{3-manifolds as viewed from the curve complex},  
Topology 40 (2001), 631 - 657.

\bibitem{LM1}
M. Lustig, Y. Moriah, \emph{High distance Heegaard splittings via fat train tracks}. 
 arXiv:0706.0599 math.GT.
 
 \bibitem{LM2}
M. Lustig, Y. Moriah, \emph{Horizontal Dehn surgery and genericity in the curve complex} 
arXiv:0711.4492v1
 
 \bibitem{Mah}
J. Maher, \emph{Random walks on the mapping class group}, Preprint 2006,
arXiv:math.GT/0604433.

 \bibitem{Mc}
 D. McCullough, {\em Virtually geometrically finite mapping class groups 
of 3-manifolds}. J. Differential Geom. 33 (1) (1991), 1 - 65 . 

\bibitem{MM}
H. Masur, Y. Minsky, {\em Geometry of the complex of curves I: hyperbolicity}, 
Inventiones. Mathematicae 138 (1999), 103 - 149. 



\end{thebibliography}
 \end {document}